\documentclass{amsart}
\usepackage[mathscr]{eucal}
\usepackage{enumerate}

\newtheorem{theorem}{Theorem}[section]
\newtheorem{lemma}[theorem]{Lemma}
\usepackage{xcolor}
\theoremstyle{definition}

\newtheorem{example}[theorem]{Example}

\newtheorem{cor}[theorem]{Corollary}
\newtheorem{prop}[theorem]{Proposition}
\theoremstyle{remark}

\numberwithin{equation}{section}

\begin{document}
	
	
	\title[ Linear maps in $\mathcal{L}(\ell_{\MakeLowercase{p}},\mathcal{Y})$ preserving parallel and TEA pairs] {{ Linear maps in $\mathcal{L}(\ell_{\MakeLowercase{p}},\mathcal{Y}) $ preserving parallel and TEA pairs}}

\author[Arpita Mal]{Arpita Mal}

\address[]{Dhirubhai Ambani University\\ Gandhinagar-382007\\ India.}
\email{arpitamalju@gmail.com}



\subjclass[2020]{Primary 15A86, 47B01, 
	Secondary  46B20}
\keywords{Parallelism; parallel pair preserver; triangle equality attaining pair preserver; linear preserver problem}



\date{}
\maketitle
\begin{abstract}

A pair $(x,y)$ of vectors in a Banach space $\mathcal{Y}$ is said to be a  triangle equality attaining (or TEA)  pair if $\|x+ y\|=\|x\|+\|y\|,$ and a parallel  pair if $\|x+\lambda y\|=\|x\|+\|y\|$ holds for some unimodular scalar $\lambda.$  In this article,  we explore bounded linear maps $T:\ell_p\to \mathcal{Y}$ preserving parallel and TEA pairs. For $p\in(1,\infty)$ all linear maps trivially preserve parallel pairs.  We prove that for $p=\infty,$ if $\ker(T)\neq \{0\},$ then $T$ preserves parallel  pairs if and only if $rank(T)\leq 1$. 
In particular, $T:\ell_\infty\to \ell_1$ preserves parallel (resp. TEA) pairs if and only if $rank(T)\leq 1$ (resp. $T=0$). Analogous characterizations hold if $T$ is defined from $\ell_\infty^n,$ except when $n=2$ and the field is real.  In this specific setting, we further characterize such maps  $T:\ell_\infty^2\to \ell_1.$ \\
Focusing on $p=1$, we establish a necessary condition for the preservation of parallel pairs. Specifically, we characterize invertible parallel pair preservers $T:\ell_1^n\to\ell_\infty^n$, as well as the general class of such maps $T:\ell_1^2 \to \ell_\infty^m,$ revealing the intricate structure inherent to these mappings. Furthermore, we prove that $(0\neq)~T:\ell_1\to \mathcal{Y}$ preserves TEA pairs if and only if $\Lambda=\{i\in \mathbb{N}:Te_i\neq 0\}$ is singleton, where $\mathcal{Y}$ is either strictly convex or $\ell_\infty^m$ over the complex field. Finally we characterize the TEA pair preservers $T:\ell_1^2\to \ell_\infty^m$ over the real field.

\end{abstract}

\section{Introduction}
Linear preserver problems represent one of the most active areas of research across diverse branches of mathematics, including matrix theory, Banach space theory, operator theory, operator algebra, etc. The purpose of this article is to explore linear maps preserving certain norm equality conditions. 
Suppose $\mathcal{X},\mathcal{Y}$ are  Banach spaces over the field $\mathbb{F}.$ Unless otherwise specified, assume $\mathbb{F}\in\{\mathbb{R}, \mathbb{C}\}.$ Given $x,y\in \mathcal{X},$ we say that the pair $(x,y)$ is a triangle equality attaining pair in $\mathcal{X}$, in short TEA pair, if  
$$\|x+y\|=\|x\|+\|y\|,$$
and  $(x,y)$ is a parallel pair in $\mathcal{X}$, if there exists $\lambda\in \mathbb{T}:=\{\mu\in \mathbb{F}:|\mu|=1\}$ such that 
 $$\|x+\lambda y\|=\|x\|+\|y\|.$$
 If $(x,y)$ is a parallel pair in $\mathcal{X},$ then so are the pairs $(y,x),$ and $(\alpha x,\beta y)$ for all $\alpha,\beta\in \mathbb{F}.$ If $\mathcal{X}$ is strictly convex, then $(x,y)$ is a parallel pair if and only if $\{x,y\}$ is linearly dependent. \\
 
Suppose $\mathcal{L}(\mathcal{X},\mathcal{Y})$ is the space of all bounded linear maps from $\mathcal{X}$ to $\mathcal{Y}.$   A linear map $T:\mathcal{X}\to \mathcal{Y}$ preserves parallel pairs, if for all $ x,y\in \mathcal{X},$
\[(x,y) \text{ is a parallel pair in } \mathcal{X} \quad \Rightarrow \quad  (Tx,Ty) \text{ is a parallel pair in } \mathcal{Y},\] 
whereas $T$ preserves TEA pairs, if for all $ x,y\in \mathcal{X},$
\[(x,y) \text{ is a TEA pair in } \mathcal{X} \quad \Rightarrow \quad  (Tx,Ty) \text{ is a TEA pair in } \mathcal{Y}.\] 
Although TEA pair preserving maps are also parallel pair preserving, the converse need not be true.
Since any two linearly dependent vectors always form a parallel pair, rank one linear maps always preserve parallel pairs. Note that, if there is an isometry $T:\mathcal{X}\to \mathcal{Y},$ then clearly $T$ preserves parallel pairs. Due to the pivotal role played by such maps in understanding the geometry of the underlying Banach spaces and the corresponding operator spaces,  these maps are recently emerging as a focal point of research for several mathematicians. See  \cite{KLPS,KLSP,LTWW, LTWW2, M,MMPS} for some interesting study in this area on various Banach spaces.  In \cite{LTWW}, this problem is addressed for $\mathcal{X}=\mathcal{Y}=L_p(\mu),$ where $1\leq p\leq \infty.$   In this article, we explore parallel or TEA pair preserving linear maps $T\in \mathcal{L}(\ell_p, \mathcal{Y}).$   For $1<p<\infty,$ $\ell_p$ being strictly convex, $(x,y)$ is a parallel pair in $\ell_p$ if and only if $x,y$ are scalar multiple of each other. Therefore all linear maps from $\ell_p$ preserve parallel pairs. Hence the study of such maps is particularly interesting for $p\in\{1,\infty\}.$ Similarly for $1<q<\infty,$ $(Tx,Ty)$ is a parallel pair in $\ell_q$ if and only if $\{Tx,Ty\}$ is linearly dependent. Therefore  for $1<q<\infty,$  $T\in \mathcal{L}(\mathcal{X},\ell_q)$ preserves parallel pairs if and only if $rank(T)\leq 1.$ 
 \\
   
   In Section \ref{sec-infinity}, we address this problem in the space $\mathcal{L}(\mathcal{X},\mathcal{Y}),$ where $\mathcal{X}\in \{\ell_\infty,\ell_\infty^n,c_0:n\geq 2\}.$  Theorems \ref{th-ncom} and \ref{th-nteag}  characterize the parallel and TEA pair preserving maps, respectively, that are not injective. We give a  special attention  to the case $\mathcal{Y}\in \{\ell_1,\ell_1^m:m\geq 2\}.$ Specifically for such range spaces,  we consider the following cases 
   \begin{itemize}
   	\item $\mathbb{F}\in \{\mathbb{R},\mathbb{C}\},$ if $\mathcal{X}\in \{\ell_\infty,\ell_\infty^n:n\geq 3\}$  and $\mathbb{F}=\mathbb{C},$ if $\mathcal{X}=\ell_\infty^2$ 
   	\item $\mathbb{F}=\mathbb{R},$ if $\mathcal{X}=\ell_\infty^2$ 
   \end{itemize}
   separately. Theorems \ref{th-inf} and  \ref{th-ncomp}  (resp. Corollary \ref{cor-ntea}) show that in the first case, $T$ preserves parallel (resp. TEA) pairs if and only if $rank(T)\leq 1$ (resp. $T=0$).  Corollary \ref{cor-n2} characterizes these maps for the second case, and exhibits the existence of nontrivial parallel and TEA pair preserves, in contrast to the previous case. Theorem \ref{th-n2p} characterizes the TEA pair preservers in $\mathcal{L}(\ell_\infty^2,\mathcal{Y}),$ where $\mathcal{Y}$ is strictly convex, and the scalars are real.\\
   In Section \ref{sec-l1}, we investigate the problem in the space $\mathcal{L}(\ell_1(J),\mathcal{Y}),$ where $J\subseteq \mathbb{N}.$ In this setting the problem is significantly difficult unlike the space $\mathcal{L}(\ell_\infty,\mathcal{Y}).$  The parallel pair  preserving maps in $\mathcal{L}(\ell_1(J),\mathcal{Y})$ exhibit a rich structure.  The section begins with a necessary condition for parallel pair preserving maps $T$ in Theorem \ref{th-rank}, which eventually reduces the problem to the case $|J|<\infty,$ when $1<rank(T)<\infty.$  Theorem \ref{th-inv} characterizes the invertible parallel pair preservers in $\mathcal{L}(\ell_1^n,\ell_\infty^n).$ Sufficient condition for parallel pair preserving maps in $\mathcal{L}(\ell_1^n,\ell_\infty^m)$ $(m,n\geq 2)$ is obtained. In particular, these maps in $\mathcal{L}(\ell_1^2,\ell_\infty^m)$ are characterized in Theorems \ref{th-com} and \ref{th-comr} for complex and real scalars, respectively.   Corollary \ref{cor-teainf} establishes that if either $\mathcal{Y}$ is strictly convex or $\mathcal{Y}=\ell_\infty^m$ and $\mathbb{F}=\mathbb{C},$ then   $(0\neq) ~T\in\mathcal{L}(\ell_1(J),\mathcal{Y})$ preserves TEA pairs  if and only if  the set  $\Lambda=\{i\in J:Te_i\neq 0\}$ is a singleton,  where   $e_i$ denotes the $i$-th standard coordinate vector of the corresponding space.   Throughout the article we follow this convention. 
   On the other hand, Theorem \ref{th-tear} characterizes TEA pair preservers in $\mathcal{L}(\ell_1^2,\ell_\infty^m)$ if the scalars are real, and proves the existence of such maps of rank 2 in contrast to the complex scalars. Finally Theorem \ref{th-ltea} characterizes the TEA pair preservers in $\mathcal{L}(\ell_1^2,\mathcal{Y}),$ where $\mathcal{Y}$ is strictly convex, and the scalars are real.\\

    For ease of comparison, in  Tables 1 and 2, we summarize the structural classifications of these maps established here. Notably, they emphasize the distinction between these maps over real and complex scalar fields when $\mathcal{X}\in \{\ell_\infty^2,\ell_1^2\}.$ 
   \begin{center}
   	Table 1 : Parallel pair preservers $T:\mathcal{X}\to \mathcal{Y}$
   	\begin{tabular}{ |c|c|c|c|} 
   		\hline
   		$\mathcal{X}$ & $\mathcal{Y}$ & $\mathbb{F}$ & Parallel pair preserver\\ 
   		\hline 
   		$\ell_\infty^2$ & $\ell_1$   & $\mathbb{R}$& $Te_j=(a_{1j},a_{2j},\ldots,a_{nj},\ldots)$, for $j=1,2,$ \\
   		&&& $a_{i2}\in \{a_{i1},-a_{i1}\},$ $i\in \mathbb{N}$\\
   		&&& or \\
   		&&&$rank(T)=1$\\
   		\hline
   		$\ell_\infty^2$ &  $\ell_1^m,~m\geq 2$  & $\mathbb{R}$& $Te_j=(a_{1j},a_{2j},\ldots,a_{mj})$, for $j=1,2,$ \\
   		&&& $a_{i2}\in \{a_{i1},-a_{i1}\},$ $1\leq i\leq m$\\
   		&&& or \\
   		&&& $rank(T)=1$\\
   		\hline
   		$\ell_\infty^2$ &  $\ell_1$ or $\ell_1^m,m\geq 2$  & $\mathbb{C}$& $rank(T)\leq 1$\\
   		\hline
   		$\ell_\infty^2$ &  Strictly convex   & $\mathbb{R}$ or $\mathbb{C}$& $rank(T)\leq 1$\\
   		\hline
   		$\ell_\infty$ or  &  Strictly convex or   & $\mathbb{R}$ or $\mathbb{C}$& $rank(T)\leq 1$\\
   		$\ell_\infty^n,$ $n\geq 3$	&$\ell_1$ or  $\ell_1^m,~m\geq 2$&&\\
   		\hline
   		$\ell_\infty$ or $c_0$ &  Finite-dimensional   & $\mathbb{R}$ or $\mathbb{C}$& $rank(T)\leq 1$\\
   		\hline
   		$\ell_\infty^n,$ $n\geq 3$&  $\dim(\mathcal{Y})<n$   & $\mathbb{R}$ or $\mathbb{C}$& $rank(T)\leq 1$\\
   		\hline
   	\end{tabular}
   	
   \end{center}
   \vspace{.3cm}
   \begin{center}
   	Table 2 : TEA pair preservers $T:\mathcal{X}\to \mathcal{Y}$
   	\begin{tabular}{ |c|c|c|c|} 
   		\hline
   		$\mathcal{X}$ & $\mathcal{Y}$ & $\mathbb{F}$ & TEA pair preserver\\ 
   		\hline
   		$\ell_\infty^2$ & $\ell_1$   & $\mathbb{R}$& $Te_j=(a_{1j},a_{2j},\ldots,a_{nj},\ldots)$, for $j=1,2,$ \\
   		&&& $a_{i2}\in \{a_{i1},-a_{i1}\},$ $i\in \mathbb{N}$\\
   		\hline
   		$\ell_\infty^2$ &  $\ell_1^m,~m\geq 2$  & $\mathbb{R}$& $Te_j=(a_{1j},a_{2j},\ldots,a_{mj})$, for $j=1,2,$ \\
   		&&& $a_{i2}\in \{a_{i1},-a_{i1}\},$ $1\leq i\leq m$\\
   		\hline
   		$\ell_\infty^2$ &  Strictly convex  & $\mathbb{R}$& $T(\cdot)=f(\cdot)y,$ where $y\in \mathcal{Y}, $ and \\
   		&&&$f\in \{(1,1),(1,-1)\}\subset \ell_1^2$ \\
   		\hline
   		$\ell_\infty^2$ &  Strictly convex or   & $\mathbb{C}$& $T=0$\\
   		&$\ell_1$ or  $\ell_1^m,~m\geq 2$&&\\
   		\hline
   		$\ell_\infty$ or  &  Strictly convex or   & $\mathbb{R}$ or $\mathbb{C}$& $T=0$\\
   		$\ell_\infty^n,$ $n\geq 3$	&$\ell_1$ or  $\ell_1^m,~m\geq 2$&&\\
   		\hline
   		$\ell_\infty$ or $c_0$ &  Finite-dimensional   & $\mathbb{R}$ or $\mathbb{C}$& $T=0$\\
   		\hline
   		$\ell_\infty^n,$ $n\geq 3$&  $\dim(\mathcal{Y})<n$   & $\mathbb{R}$ or $\mathbb{C}$& $T=0$\\
   		\hline
   		$\ell_1(J)$ &  Strictly convex   & $\mathbb{R}$ or $\mathbb{C}$ & $T=0$ or $\Lambda=\{i\in J:Te_i\neq 0\}$ is singleton \\
   		\hline
   			$\ell_1(J)$ & $ \ell_\infty^m,~m\geq 2$   & $\mathbb{C}$ & $T=0$ or $\Lambda=\{i\in J:Te_i\neq 0\}$ is singleton \\
   		\hline
   			$\ell_1^2$ & $ \ell_\infty^m,~m\geq 2$   & $\mathbb{R}$ & $T=0$ or $\Lambda=\{i\in J:Te_i\neq 0\}$ is singleton \\
   		&&& or upto permutation of rows, \\
   		&&&$T=\begin{bmatrix}
   			t_{11}&t_{11}&t_{31}&\ldots& t_{m1}\\
   				t_{12}&-t_{12}&t_{32}&\ldots& t_{m2}\\
   			\end{bmatrix}^T,$\\
   		&&& where $|t_{11}|\geq |t_{i1}|,$ $|t_{12}|\geq |t_{i2}|,$ $3\leq i\leq m$\\
   			\hline
   			$\ell_1^2$ & Strictly convex & $\mathbb{R}$ &$T(\cdot)=f(\cdot)y,$ where $y\in \mathcal{Y}, $ and \\
   			&&&$f\in \{e_1,e_2\}\subset \ell_\infty^2$  \\
   			\hline
   	\end{tabular}
   \end{center}
   
     \medskip

 Whenever appropriate, we identify $T\in \mathcal{L}(\ell_p^n,\ell_q^m)$ with its corresponding $m\times n$ matrix in the standard ordered basis. The symbols $\mathcal{X}^*,$ and $E_{\mathcal{X}}$ denote the dual space of $\mathcal{X},$ and the set of all extreme points of the closed unit ball of  $\mathcal{X},$ respectively. Suppose $J(x):=\{f\in \mathcal{X}^*:\|f\|=1,f(x)=\|x\|\}.$ 
We use the following characterization of parallel and TEA pairs.
\begin{itemize}
\item $(x,y)$ is a parallel (resp. TEA) pair in $\mathcal{X}$ if and only if there exists $f\in J(x)\cap J(\lambda y)$ for some $\lambda\in \mathbb{T}$ (resp. $f\in J(x)\cap J(y)$).
\end{itemize}
If $\dim(\mathcal{X})<\infty,$ then in the above characterization, we may further assume $f\in E_{\mathcal{X}^*}.$ In particular, if $x=(x_1,\ldots,x_n), y=(y_1,\ldots,y_n)\in \mathbb{F}^n,$ 
 then the following holds.
\begin{itemize}
\item $(x,y)$ is a	parallel (resp. TEA) pair in $\ell_1^n$ if and only if there exists $\lambda\in \mathbb{T}$ (resp. $\lambda=1$) such that $\lambda \overline{x_k}y_k\geq 0$ for all $1\leq k\leq n.$
\item $(x,y)$ is a	parallel (resp. TEA) pair in $\ell_\infty^n$ if and only if $\|x\|=|x_k|,\|y\|=|y_k|$ (resp. $\|x\|=|x_k|,\|y\|=|y_k|,$ and $\overline{x_k}y_k\geq 0$) for some $k\in \{1,\ldots,n\}.$ 
\end{itemize}
 For $x\in \{\ell_\infty,\ell_\infty^n,c_0\},$ if $\|x\|=|x_k|$ holds, where $x_k$ denotes the $k$-th coordinate of  $x,$ then we say that $x$ attains its norm at the $k$-th coordinate. Thus $(x,y)$ is a parallel pair in $\ell_\infty^n$ if and only if both $x$ and $y$ attain their norm at $k$-th coordinate for some $1\leq k\leq n.$ A non-zero vector $x\in \mathcal{X}$ is said to be smooth, if $J(x)$ is singleton.
The following lemma is used extensively in the sequel.

\begin{lemma}\label{lem-pdim}\cite[Lemma 2.3]{M} 
	Suppose $\mathcal{X}$ contains a smooth vector (in particular, $\mathcal{X}$ is separable). If $x\parallel y$ for all $x,y\in \mathcal{X},$ then $\dim(\mathcal{X})\leq 1.$	
\end{lemma}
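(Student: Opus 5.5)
We argue by contradiction: assume $\dim(\mathcal{X})\geq 2$ and that every pair $x,y\in\mathcal{X}$ is parallel, and construct two vectors that are not parallel. Let $x_0\in\mathcal{X}$ be a smooth vector, normalized so that $\|x_0\|=1$, and let $J(x_0)=\{f_0\}$. The key tool is the characterization recalled above: $x\parallel y$ holds if and only if there is $f\in J(x)\cap J(\lambda y)$ for some $\lambda\in\mathbb{T}$. For the smooth vector $x_0$ this means that, for every $y\in\mathcal{X}$, the functional $f_0$ itself must norm $\lambda y$ for some unimodular $\lambda$. In other words,
\[
|f_0(y)|=\|y\| \quad\text{for all } y\in\mathcal{X}.
\]

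Since $\dim(\mathcal{X})\geq 2$, the kernel $\ker f_0$ is nontrivial, so we may pick $0\neq y\in\ker f_0$. Then $0=|f_0(y)|=\|y\|\neq 0$, which is a contradiction. Hence $\dim(\mathcal{X})\leq 1$.

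For the parenthetical claim, that a separable space contains a smooth vector, we invoke Mazur's theorem. In a separable Banach space the set of points of Gâteaux differentiability of the norm is a dense $G_\delta$, so in particular it is nonempty. Gâteaux differentiability of the norm at a nonzero $x$ is equivalent to $J(x)$ being a singleton, which is exactly smoothness of $x$.

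The main obstacle is the step deducing $f_0\in J(\lambda y)$ from $x_0\parallel y$. The characterization only gives some $f\in J(x_0)\cap J(\lambda y)$, but smoothness forces $f=f_0$. This step also needs the characterization for arbitrary Banach spaces, not only finite-dimensional ones. That general version follows from Hahn--Banach: take $f\in J(x_0+\lambda y)$, and the equality $\|x_0+\lambda y\|=\|x_0\|+\|y\|$ together with $\|f\|=1$ forces $f(x_0)=\|x_0\|$ and $f(\lambda y)=\|y\|$.
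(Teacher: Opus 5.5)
Your proof is correct and complete. The paper gives no proof of this lemma of its own; it quotes it from [M, Lemma 2.3]. Your route is the natural one: smoothness of $x_0$ forces its unique supporting functional $f_0$ to satisfy $|f_0(y)|=\|y\|$ for every $y\in\mathcal{X}$. That is impossible for a nonzero $y\in\ker f_0$, and such a $y$ exists once $\dim(\mathcal{X})\geq 2$. Your Hahn--Banach derivation of the characterization and your appeal to Mazur's theorem for the separable case are both sound.

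As a side remark, whether $x\parallel y$ holds depends only on the norm restricted to $\mathrm{span}\{x,y\}$. So you can run the same argument inside any two-dimensional subspace, which always contains a smooth vector, and this shows the smoothness hypothesis is actually superfluous. That stronger form is convenient for the paper's applications of the lemma to subspaces such as $T(V)\subseteq\mathcal{Y}$, which need not be closed.
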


\section{Parallel and TEA pair preservers in $\mathcal{L}(\ell_\infty,\mathcal{Y})$ }\label{sec-infinity}
The primary goal of this section is to characterize the parallel and TEA pair preservers in $\mathcal{L}(\mathcal{X},\mathcal{Y}),$  where  $\mathcal{X}\in \{\ell_\infty^n,\ell_\infty,c_0:n\geq 2\}.$  To achieve the goal, we first prove the following lemmas.

\begin{lemma}\label{lem-r1}
	Suppose $T\in \mathcal{L}(\mathcal{X},\mathcal{Y})$  preserves parallel pairs, where  $\mathcal{X}\in \{\ell_\infty^n,\ell_\infty,c_0:n\geq 2\}.$  Let $Tx=0$ for some non-zero vector $x=\sum_{i=1}^kx_ie_i\in \mathcal{X}.$ Then $\dim(T(V))\leq 1,$ where $$V=\{v\in \mathcal{X}:j\text{-th coordinate of } v \text{ is }0, 1\leq j\leq k\}.$$
	In particular, if $Te_i=0$ for some $i\geq 1,$ then $rank(T)\leq 1.$
\end{lemma}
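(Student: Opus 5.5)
The plan is to show that any two vectors of $T(V)$ form a parallel pair in $\mathcal{Y}$, and then pass to a two-dimensional subspace to invoke Lemma \ref{lem-pdim}. The key observation is that adding a large multiple of $x$ does not change $T$-images, because $Tx=0$. It also forces the norm of any vector of $V$ to be attained on the support of $x$.

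\textbf{Step 1 (every pair in $T(V)$ is parallel).} Fix $u,v\in V$. Since $x\neq 0$, choose $k_0\in\{1,\ldots,k\}$ with $|x_{k_0}|=\|x\|>0$, and a scalar $c>0$ with $c\|x\|>\max\{\|u\|,\|v\|\}$. Put $a=u+cx$ and $b=v+cx$.

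On the coordinates $1,\ldots,k$, the vectors $a$ and $b$ coincide with $cx$, because $u,v$ vanish there. On the remaining coordinates, $a$ and $b$ coincide with $u$ and $v$ respectively, because $x$ vanishes there. Hence
$$\|a\|=\max\{c\|x\|,\|u\|\}=c\|x\|=|a_{k_0}|,$$
and similarly $\|b\|=|b_{k_0}|$. So $a$ and $b$ both attain their norm at the $k_0$-th coordinate, and $(a,b)$ is a parallel pair in $\mathcal{X}$. (For $\ell_\infty$ and $c_0$ one uses the analogous characterization via the coordinate functional $e_{k_0}^*$, which lies in $J(a)\cap J(\lambda b)$ for a suitable $\lambda\in\mathbb{T}$.)

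Since $T$ preserves parallel pairs and $Ta=Tu$, $Tb=Tv$, the pair $(Tu,Tv)$ is parallel in $\mathcal{Y}$.

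\textbf{Step 2 (dimension bound).} Suppose, for contradiction, that $\dim T(V)\geq 2$. Pick $u,v\in V$ with $Tu,Tv$ linearly independent, and let $W=\operatorname{span}\{Tu,Tv\}=T(\operatorname{span}\{u,v\})\subseteq T(V)$. By Step 1, every pair in $W$ is parallel in $\mathcal{Y}$, and hence in $W$, since the norm is the restricted norm. As $W$ is two-dimensional, it is separable and contains a smooth vector (Mazur). Lemma \ref{lem-pdim} then gives $\dim W\leq 1$, a contradiction.

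This passage to a finite-dimensional subspace is the only delicate point. Lemma \ref{lem-pdim} needs a smooth vector, and $T(V)$ itself need not be separable when $\mathcal{X}=\ell_\infty$.

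\textbf{Step 3 (the particular case).} Step 1 used only that $x$ vanishes off a finite set $S$ and that $u,v$ vanish on $S$. So the same argument gives $\dim T(V_S)\leq 1$ for $V_S=\{v: v_j=0,\ j\in S\}$. If $Te_i=0$, take $x=e_i$ and $S=\{i\}$. Then $\mathcal{X}=V_{\{i\}}\oplus\operatorname{span}\{e_i\}$ and $T$ vanishes on the second summand. Therefore $\operatorname{rank}(T)=\dim T(V_{\{i\}})\leq 1$.
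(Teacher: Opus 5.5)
Your proof is correct and follows the paper's argument. You add a large multiple of $x$ so that both vectors attain their norm at a coordinate in the support of $x$, deduce that any two vectors of $T(V)$ are parallel, invoke Lemma \ref{lem-pdim}, and get the particular case from $\mathcal{X}=\operatorname{span}\{e_i\}\oplus\{w: w_i=0\}$. Your Step 2, which applies Lemma \ref{lem-pdim} to the two-dimensional space $\operatorname{span}\{Tu,Tv\}$, is in fact a sounder justification than the paper's. The paper cites smoothness of the coordinate vectors in $\mathcal{X}$, whereas the lemma has to be applied to a subspace of $\mathcal{Y}$.
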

\begin{proof}
 We claim that arbitrary two elements of $T(V)$ are parallel. Suppose $u,v\in V.$ Choose $\xi\in \mathbb{R}$ such that  $\xi \|x\|>\max\{\|u\|,\|v\|\}.$ If $\|x\|=|x_i|,$ then both $\xi x+u$ and $\xi  x+v$ attain their norm at the $i$-th coordinate. Therefore, 
\[(\xi x+u,\xi x+v) \text{ is a parallel pair in }\mathcal{X}\quad \Rightarrow \quad (Tu,Tv) \text{ is a parallel pair in }\mathcal{Y}.\]
Since each standard coordinate vector is smooth in $\mathcal{X},$ by Lemma \ref{lem-pdim}, $\dim(T(V))\leq 1.$ 
The next part of the proof follows from  $$range(T)=span\{Te_i\}\oplus T(W),$$ where $W=\{w\in \mathcal{X}:i\text{-th coordinate of } w \text{ is }0\},$ and the first part.
\end{proof}

\begin{lemma}\label{lem-nind}
		Suppose $T\in \mathcal{L}(\mathcal{X},\mathcal{Y})$  preserves parallel pairs, where  $\mathcal{X}\in\{\ell_\infty^n,\ell_\infty, c_0:n\geq 2\}.$ 
		 Assume $\ker(T)\neq \{0\}.$ Then   $\{Te_k,Te_j\}$ is linearly dependent, for some $ k<j, $ $e_k,e_j\in \mathcal{X}.$
\end{lemma}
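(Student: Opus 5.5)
The plan is to exploit a kernel vector $x$ exactly as in the proof of Lemma \ref{lem-r1}. The idea is to perturb a large multiple $\xi x$ by two vectors $u,v$ that are invisible to the norming functional of $x$. Then $(\xi x+u,\xi x+v)$ is a parallel pair, and applying $T$ kills $\xi x$, so $(Tu,Tv)$ is a parallel pair in $\mathcal{Y}$. Taking $u,v$ in the span of two suitable coordinate vectors $e_k,e_j$, every pair of vectors in $span\{Te_k,Te_j\}$ is then parallel. Since this span is finite-dimensional, hence separable, Lemma \ref{lem-pdim} gives $\dim span\{Te_k,Te_j\}\leq 1$, i.e. $\{Te_k,Te_j\}$ is linearly dependent. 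Note that this applies Lemma \ref{lem-pdim} to a subspace of $\mathcal{Y}$, which is what is needed.

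Fix $0\neq x\in\ker(T)$. \textbf{Case 1: $x$ attains its norm at some coordinate $i$.} This always happens for $\ell_\infty^n$ and $c_0$. If $\mathcal{X}=\ell_\infty^2$, then $x_1Te_1+x_2Te_2=Tx=0$ with $(x_1,x_2)\neq 0$, so $\{Te_1,Te_2\}$ is already dependent. Otherwise choose $k<j$ with $k,j\neq i$. For $u,v\in span\{e_k,e_j\}$, pick $\xi>0$ with $\xi\|x\|>\|u\|+\|v\|$ (nontrivial only when $u,v\neq 0$). Then $\xi x+u$ and $\xi x+v$ both attain their norm at the $i$-th coordinate, so they form a parallel pair. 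Hence $(Tu,Tv)$ is a parallel pair, and we conclude as above.

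\textbf{Case 2: $\mathcal{X}=\ell_\infty$ and $x$ does not attain its norm.} This is the step I expect to be the main obstacle, because there is no coordinate functional norming $x$. I would replace the coordinate functional by a limit functional along the tail. Choose indices $m_1<m_2<\cdots$ with $|x_{m_r}|\to\|x\|$, let $\mathcal{U}$ be a free ultrafilter on $\mathbb{N}$, and define
\[\varphi(z)=\lim_{\mathcal{U}}\ \overline{\operatorname{sgn}(x_{m_r})}\,z_{m_r}.\]
Then $\varphi\in\ell_\infty^*$, $\|\varphi\|\leq 1$, and $\varphi(x)=\|x\|$. Now take any $k<j$, and $u,v\in span\{e_k,e_j\}$. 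These are finitely supported, so $\varphi(u)=\varphi(v)=0$.

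Choose $\xi$ large enough that $\|\xi x+u\|=\xi\|x\|=\|\xi x+v\|$. This holds because $u$ only changes two coordinates, where $|\xi x_m|<\xi\|x\|$ strictly, and for $\xi$ large the perturbation stays below $\xi\|x\|$. It also uses that the supremum is still approached along the tail. Then $\varphi\in J(\xi x+u)\cap J(\xi x+v)$, so $(\xi x+u,\xi x+v)$ is a TEA, hence parallel, pair. Thus $(Tu,Tv)$ is parallel, and Lemma \ref{lem-pdim} again gives the linear dependence of $\{Te_k,Te_j\}$. In this case any $k<j$ works.

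The only delicate points are the strict inequality $|x_k|,|x_j|<\|x\|$, which is guaranteed by non-attainment in Case 2, and the choice of $\xi$ uniform in the two given vectors $u,v$. The latter is harmless because parallelism only needs to be checked pair by pair.
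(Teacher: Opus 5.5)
Your Case 2 is fine: it makes rigorous the paper's subcase $\|x\|>|x_1|$, with your limit functional supplying a common norming functional. Case 1, however, has a genuine gap.

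The claim that $\xi x+u$ and $\xi x+v$ attain their norms at the $i$-th coordinate requires $|x_k|<\|x\|$ and $|x_j|<\|x\|$, a condition you only flag in Case 2. Even when it holds, $\xi\|x\|>\|u\|+\|v\|$ is the wrong bound; you need roughly $\xi(\|x\|-\max\{|x_k|,|x_j|\})\geq\max\{\|u\|,\|v\|\}$. For instance, with $x=(1,0.9,0.9)$, $u=e_2$, $v=e_3$ and $\xi=2.1$, the vector $\xi x+u$ peaks at the second coordinate.

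The serious issue is that when fewer than two coordinates of $x$ have modulus strictly less than $\|x\|$, no admissible $k,j$ exists and the construction collapses. Take $\ell_\infty^3$ with $x=(1,1,1)\in\ker(T)$. Whatever $i$ and $k,j\neq i$ you take, $\xi x+e_k$ attains its norm only at the $k$-th coordinate and $\xi x+e_j$ only at the $j$-th. So $(\xi x+e_k,\xi x+e_j)$ is not a parallel pair, and you learn nothing about $(Te_k,Te_j)$; note that $(e_k,e_j)$ itself is not parallel either. The same happens for $x=(1,1,1,\ldots)\in\ell_\infty$. In $c_0$, and in general whenever two coordinates of $x$ lie strictly below $\|x\|$, your argument does go through once $k,j$ and $\xi$ are chosen correctly.

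These are exactly the cases on which the paper spends its effort. Your $\ell_\infty^2$ observation already disposes of any $x$ with at most two nonzero coordinates. Otherwise the paper reduces to $y=\alpha e_1+e_2$ and $z=e_1+\beta e_2$ with $|\alpha|,|\beta|<1$. It then adds to $y$ and $z$ \emph{independently chosen} multiples $\xi x$ and $\eta x$ (possibly zero), with phases depending on $\alpha$, $\beta$ and the arguments of the $x_i$. The goal is that $y+\xi x$ and $z+\eta x$ peak at a common coordinate: either one of the two coordinates in question, or a third one after the moduli at those two have been pushed down.

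When the nonzero moduli of $x$ are not all equal, say $\|x\|=|x_1|>|x_2|>0$, it suffices to add to $y$ a multiple of $x$ phase-aligned with $\alpha$ at the first coordinate, where $z$ already peaks. When they are all equal (Case b), the choice in the real case depends on the signs of $\alpha$, $\beta$ and $x_1/x_2$. In the complex case it requires an explicit construction of $\eta$ depending on the argument of $\beta$, plus a continuity argument.

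Adding the same positive multiple of $x$ to both vectors, as you do, cannot replace this. For $x=(1,1,1)$, $u=\alpha e_1+e_2$ and $v=e_1+\beta e_2$ with $0\leq\alpha,\beta<1$, the vectors $\xi x+u$ and $\xi x+v$ peak only at the second and the first coordinate, respectively. You need to supply an argument of the paper's kind for this case.
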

\begin{proof}
Let $x\in \ker(T),$ and $x\neq 0.$	 Suppose $x_j$ is the $j$-th coordinate of $x.$ Assume $x_j=|x_j|e^{i\theta_j},$ if $x_j\neq 0.$ If $x$ has at most two non-zero coordinates, then it is trivial. So assume $x$ has at least three non-zero coordinates. \\

\textbf{Case a:} Suppose $\sup\{|x_j|:x_j\neq 0\}\neq \inf\{|x_j|:x_j\neq 0\}.$ Without loss of generality, assume $|x_1|>|x_2|>0.$ We show that $\{Te_1,Te_2\}$ is linearly dependent by proving that arbitrary two elements of $span\{Te_1,Te_2\}$ form a parallel pair in $\mathcal{Y}$, and then using Lemma \ref{lem-pdim}. Suppose $y,z\in span\{e_1,e_2\}.$ If $(y,z)$ is a parallel pair in $\mathcal{X},$ then so is $(Ty,Tz)$  in $\mathcal{Y}.$ Let $(y,z)$ be not a parallel pair in $\mathcal{X}.$ Assume $y=ae_1+be_2, z=ce_1+de_2.$ Clearly, either $|a|>|b|, |c|<|d|$ or $|a|<|b|,|c|>|d|.$ Without loss of generality, assume $|a|<|b|,|c|>|d|,$ and (since parallelism is homogeneous) $y=\alpha e_1+e_2, z=e_1+\beta e_2,$ where $\alpha=\frac{a}{b},\beta=\frac{d}{c},$ $|\alpha|<1,|\beta|<1.$ Let $\alpha=|\alpha|e^{i\theta},\beta=|\beta|e^{i\phi}.$\\
If $\|x\|=|x_1|,$ then for $\xi=|\xi|e^{i(\theta-\theta_1)},$ where $|\xi|>\frac{1-|\alpha|}{|x_1|-|x_2|},$ 
 $y+\xi x$ attains its norm at the first coordinate, which yields that $(y+\xi x,z)$ is a parallel pair in $\mathcal{X},$ and so is $(Ty,Tz)$ in $\mathcal{Y}.$\\
 If $\|x\|> |x_1|,$ then for $\xi>\frac{1}{\|x\|-|x_1|},$ $(y+\xi x,z+\xi x)$ is a parallel pair in $\mathcal{X},$ and so is $(Ty,Tz)$ in $\mathcal{Y}.$\\

\textbf{Case b:} Suppose $\sup\{|x_j|:x_j\neq 0\}= \inf\{|x_j|:x_j\neq 0\}.$   Then $|x_i|=|x_j|$ for all non-zero $x_i,x_j.$ Choose three non-zero coordinates. Without loss of generality, assume $x_i\neq 0$ for $i=1,2,3,$ that is, $|x_1|=|x_2|=|x_3|>0.$ We show that $\{Te_1,Te_2\}$ is linearly dependent. As in the previous case, suppose $y,z\in span\{e_1,e_2\},$ and  without loss of generality, assume $y=\alpha e_1+e_2, z=e_1+\beta e_2,$ where  $|\alpha|<1,|\beta|<1.$ Let $\alpha=|\alpha|e^{i\theta},\beta=|\beta|e^{i\phi}.$ Now we consider $\mathbb{F}=\mathbb{R},$ and $\mathbb{F}=\mathbb{C}$ separately. \\
\textbf{Case b1:} 
Suppose $\mathbb{F}=\mathbb{R}.$  Choose $$\xi=-|\xi|e^{-i\theta_1}, \text{ where }|\xi|>\frac{1}{|x_1|},\quad \text{ and }\quad  \eta =\frac{-1}{x_1}.$$ For $x_1=x_2,$ observe the following holds in $\mathcal{X}$:
\begin{itemize}
\item If $\alpha\geq 0,\beta\geq 0,$ then	$(y+\xi x,z+\xi x)$ is a parallel pair, since both $y+\xi x,z+\xi x$ attain their norm at the third coordinate.
\item If $\alpha\geq 0,\beta< 0,$ then $(y,z+\eta x)$ is a parallel pair.
\item If $\alpha<0,$ then $(y+\eta  x,z)$ is a parallel pair.
\end{itemize}
For $x_1=-x_2,$  observe the following holds in $\mathcal{X}$:
\begin{itemize}
	\item If $\alpha\geq 0,$ then	$(y-\eta x,z)$ is a parallel pair.
	\item If $\alpha< 0,\beta\geq  0,$ then $(y,z+\eta x)$ is a parallel pair.
	\item If $\alpha<0,\beta<0,$ then $(y-\xi x,z+\eta x)$ is a parallel pair,  since both $y-\xi x,z+\eta x$ attain their norm at the third coordinate.
\end{itemize}
Therefore, in each case $(Ty,Tz)$ is a parallel pair in $\mathcal{Y}.$

\noindent \textbf{Case b2:} 
Suppose $\mathbb{F}=\mathbb{C}.$
Choose $\xi=-|\xi|e^{-i\theta_2},$ where $|\xi|>\frac{1}{|x_2|}.$ Then 
\[|1+\xi x_2|=|\xi x_2|-1<|\xi x_1|-|\alpha|\leq |\alpha+\xi x_1|.\]
If $|\alpha+\xi x_1|\geq |\xi x_3|,$ then  $y+\xi x$ attains its norm at the first coordinate, which yields that $(y+\xi x,z)$ is a parallel pair in $\mathcal{X},$ and so is $(Ty,Tz)$ in $\mathcal{Y}.$ Suppose $|\alpha+\xi x_1|< |\xi x_3|.$  Clearly,  $y+\xi x$ attains its norm at the third coordinate. Our goal is now to  construct $\eta\in \mathbb{C}$ such that $z+\eta x$ also attains its norm at the third coordinate. This would imply $(y+\xi x,z+\eta x)$ is a parallel pair in $\mathcal{X},$ and so is $(Ty,Tz)$ in $\mathcal{Y}.$\\

If $\beta=0,$ then for $\eta=-|\eta|e^{-i\theta_1},$ where $|\eta|>\frac{1}{|x_1|},$ 
$z+\eta x$  attains its norm at the third coordinate.\\ 

Suppose $\beta\neq 0.$ In this case, the construction of $\eta$ depends on the argument of $\beta.$ Let $\phi_\mu=\theta_2-\theta_1-\mu,$ for some $0\leq \mu<\pi.$ Consider 
$$ \beta_\mu=|\beta|e^{i\phi_\mu},\text{ and}\quad z_\mu=e_1+\beta_\mu e_2.$$
Choose $\epsilon>0$ such that $0\leq \mu<\mu+\epsilon<\pi.$ Let $\psi=\frac{\pi}{2}-\theta_1+\epsilon.$ Note that \[\cos(\psi+\theta_1)<0, ~\text{ and } \cos(\psi+\theta_2-\phi_\mu)<0.\] 
Define 
\begin{equation}\label{eq-n001}
	 \eta =|\eta|e^{i\psi}, \text{ where } |\eta|>\max\Big\{\frac{-1}{2|x_1|\cos(\psi+\theta_1)}, \frac{-|\beta_\mu|}{2|x_2|\cos(\psi+\theta_2-\phi_\mu)}\Big\}.
	 \end{equation}
Now it is easy to check that 
\[|1+\eta x_1|\leq |\eta x_3|,\text{ and } |\beta_\mu+\eta x_2|\leq |\eta x_3|.\]
Therefore, $z_\mu+\eta x$ attains its norm at the third coordinate. \\
 Similarly, if $\phi_\mu=\theta_2-\theta_1-\mu,$ for some $\pi< \mu<2\pi,$ then 
 choose $$\psi\in \Big(\frac{5\pi}{2}-\theta_1-\mu,\frac{3\pi}{2}-\theta_1\Big),$$ and define $\eta$ as \eqref{eq-n001}. Then  $z_\mu+\eta x$ attains its norm at the third coordinate, and so $(y+\xi x, z_\mu+\eta x)$ is a parallel pair in $\mathcal{X}.$ Therefore, $(Ty,Tz_\mu)$ is a parallel pair in $\mathcal{Y}.$ Using a continuity argument, we get,  $(Ty,Tz_\mu)$ is a parallel pair in $\mathcal{Y},$ for all $\mu\in [0,2\pi).$ Therefore, $(Ty,Tz)$ is a parallel pair in $\mathcal{Y}.$\\
 Since arbitrary two elements of $span\{Te_1,Te_2\}$ form a parallel pair, by Lemma \ref{lem-pdim}, $\{Te_1,Te_2\}$ is linearly dependent.
\end{proof}

The following theorem characterizes the parallel pair preservers in $\mathcal{L}(\ell_\infty,\mathcal{Y})$ that are not injective.
\begin{theorem}\label{th-ncom}
	Suppose $T\in \mathcal{L}(\mathcal{X},\mathcal{Y}),$  where  $\mathcal{X}\in\{\ell_\infty^n,\ell_\infty,c_0:n\geq 2\}.$ Assume $\ker(T)\neq \{0\}.$ Then $T$ preserves parallel pairs  if and only if $rank(T)\leq 1.$
\end{theorem}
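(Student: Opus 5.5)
The sufficiency direction is immediate. If $rank(T)\leq 1$, then for any $x,y\in\mathcal{X}$ the images $Tx,Ty$ lie in a one-dimensional subspace, so they are linearly dependent. Linearly dependent vectors always form a parallel pair, so $T$ preserves parallel pairs. All the work is in necessity: assume $T$ preserves parallel pairs and $\ker(T)\neq\{0\}$, and show $rank(T)\leq 1$.

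For necessity, Lemma \ref{lem-nind} gives indices $k<j$ with $\{Te_k,Te_j\}$ linearly dependent. Without loss of generality $Te_j=\gamma Te_k$ for some scalar $\gamma$; otherwise $Te_k=0$, and Lemma \ref{lem-r1} finishes immediately. Then $x=e_j-\gamma e_k$ is a nonzero vector in $\ker(T)$ supported only on the coordinates $k,j$. This is the key reduction: instead of an arbitrary kernel vector, we now have one with at most two nonzero coordinates.

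Next I would apply Lemma \ref{lem-r1} to this $x$, after relabelling coordinates so that its support sits in the first positions. This gives $\dim T(V)\leq 1$, where $V$ is the subspace of vectors vanishing at coordinates $k$ and $j$. Since $\mathcal{X}=\mathrm{span}\{e_k,e_j\}\oplus V$ and $Te_j=\gamma Te_k$, we get
\[ range(T)=\mathrm{span}\{Te_k\}+T(V), \]
so $rank(T)\leq 2$, with $T(V)=\mathrm{span}\{w\}$ for some $w$.

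It remains to rule out the case where $Te_k$ and $w$ are independent. This is the step I expect to be the main obstacle. If $n=2$, then $V=\{0\}$ and we are already done, so assume there is a third coordinate $l$. Lemma \ref{lem-r1} holds whenever the kernel vector is supported on any finite set of coordinates, so I would look for a second kernel vector with small support to play the same role. For instance, if $Te_l=0$ for some $l$, the "in particular" part of Lemma \ref{lem-r1} gives the result directly. Otherwise $Te_l$ is a nonzero multiple of $w$ for every $l\notin\{k,j\}$, since $e_l\in V$.

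If there are two such coordinates $l,l'$ (case $n\geq 4$, or infinite), then $Te_{l'}=\delta Te_l$ gives a kernel vector supported on $\{l,l'\}$. Applying Lemma \ref{lem-r1} to it shows that $T(\mathrm{span}\{e_k,e_j\})=\mathrm{span}\{Te_k\}$ together with $w$ spans a space of dimension at most one, forcing $rank(T)\leq 1$. For the infinite case the vectors $e_l$ span only a dense-type part, so I would instead apply Lemma \ref{lem-r1} with the support $\{l,l'\}$ to get $\dim T(W)\leq 1$, where $W$ is the set of vectors vanishing at $l,l'$, which contains $e_k$ and some $v\in V$ with $Tv=w$.

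For $n=3$ there is only one extra coordinate $l$. Here I would redo the argument with $\mathrm{span}\{e_k,e_l\}$: if $\{Te_k,Te_l\}$ were independent, I would use the explicit kernel vector $e_j-\gamma e_k$ as in Case a/b of Lemma \ref{lem-nind}, adding large multiples of it, to show that any two elements of $T(\mathrm{span}\{e_k,e_l\})$ are parallel. Lemma \ref{lem-pdim} then gives a contradiction.
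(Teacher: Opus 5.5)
Your reduction to $rank(T)\leq 2$ with $range(T)=\mathrm{span}\{Te_k\}+\mathrm{span}\{w\}$ is fine. Your second-kernel-vector argument also works whenever a fifth coordinate $m\notin\{k,j,l,l'\}$ exists: then $e_m$ lies in both $V$ and the set $W$ of vectors vanishing at $l,l'$, and $Te_m$ is a nonzero multiple of $w$.

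The gap is $\mathcal{X}=\ell_\infty^4$, which you also route through that argument, and there it fails. The coordinates are exactly $k,j,l,l'$, so $W=\mathrm{span}\{e_k,e_j\}$ and $V\cap W=\{0\}$. Lemma \ref{lem-r1} applied to $e_{l'}-\delta e_l$ then only gives $\dim T(\mathrm{span}\{e_k,e_j\})\leq 1$, which you already knew, and nothing links $Te_k$ to $w$. So the configuration $Te_j=\gamma Te_k$, $Te_{l'}=\delta Te_l$ with $Te_k,Te_l$ independent is not excluded. Lemma \ref{lem-r1} alone cannot exclude it either. In that configuration every nonzero kernel vector is supported on $\{k,j\}$, on $\{l,l'\}$, or on all four coordinates. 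The lemma then returns one of the two facts you already have, or nothing.

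The missing ingredient is the direct argument you only sketch for $n=3$, and it does not use $n=3$ at all. Take $x=e_j-\gamma e_k$; here $\gamma\neq 0$, since the case $Te_j=0$ is already covered by Lemma \ref{lem-r1}. Take $y=\alpha e_k+e_l$ and $z=e_k+\beta e_l$ with $|\alpha|,|\beta|<1$.
\begin{itemize}
\item If $|\gamma|\geq 1$, add $\xi x$ to $y$ with $|\xi|$ large and $-\xi\gamma$ having the argument of $\alpha$. Then $y+\xi x$ attains its norm at coordinate $k$, where $z$ already does.
\item If $|\gamma|<1$, add large multiples of $x$ to both $y$ and $z$. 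Then both attain their norm at coordinate $j$.
\end{itemize}
Hence $(Ty,Tz)$ is a parallel pair, and Lemma \ref{lem-pdim} gives $\dim\mathrm{span}\{Te_k,Te_l\}\leq 1$. Since $T(V)=\mathrm{span}\{Te_l\}$, we have $range(T)=\mathrm{span}\{Te_k,Te_l\}$, so $rank(T)\leq 1$.

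This is exactly the paper's proof, applied uniformly to every space with a third coordinate, including $\ell_\infty$ and $c_0$. Using it in all cases makes both your case split and the second kernel vector unnecessary. Also, the relevant estimate is not the one from Case a/b of Lemma \ref{lem-nind}. There both target vectors lie inside the support of the kernel vector, whereas here $e_l$ lies outside it, which is why the two simple cases above suffice.
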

\begin{proof}
	The sufficient part is trivial. For the necessary part, assume $T$ preserves parallel pairs. 
	If $\mathcal{X}=\ell_\infty^2,$ then the result follows immediately from $\ker(T)\neq 0.$ Suppose $\mathcal{X}\neq \ell_\infty^2.$ 	Using Lemma \ref{lem-nind}, without loss of generality, assume $\{Te_1,Te_2\}$ is linearly dependent. So $Tx=0$ for some non-zero $x=x_1e_1+x_2e_2.$ By Lemma \ref{lem-r1}, $\dim(T(V))\leq 1,$ where
	$$V=\{v\in \mathcal{X}:j\text{-th coordinate of } v \text{ is }0, j=1,2\}.$$
	If  $Te_i=0$ for some $i\geq 1,$ then the result follows from Lemma \ref{lem-r1}. Assume  $Te_i\neq 0$ for all $i\geq 1.$ Then $x_1\neq 0,x_2\neq 0.$
	Note that $$range(T)=span\{Te_1,Te_2, Tv:v\in V\} =span\{Te_1,Te_3\}.$$ 
We show that arbitrary two elements of $span \{Te_1,Te_3\}$ are parallel. Let $y,z\in span\{e_1,e_3\}.$ If $(y,z)$ is a parallel pair in $\mathcal{X},$ then so is $(Ty,Tz)$ in $\mathcal{Y}.$ Suppose $(y,z)$ is not a parallel pair in $\mathcal{X}.$ As in the previous lemma, without loss of generality, we may assume that 
	\[y=\alpha e_1+e_3, \quad z=e_1+\beta e_3,\]
	where $|\alpha|<1,|\beta|<1.$ Suppose $\alpha=|\alpha|e^{i\theta},\beta=|\beta|e^{i\phi},$ and $x_j=|x_j|e^{i\theta_j}$ for $j=1,2.$\\ 
	If $|x_1|\geq |x_2|,$ then for $\xi=|\xi|e^{i(\theta-\theta_1)},$ where $|\xi|>\frac{1-|\alpha|}{|x_1|},$  $(y+\xi x,z)$ is a parallel pair in $\mathcal{X}.$
	If $|x_1|< |x_2|,$ then choose $\xi=|\xi|e^{i(\theta-\theta_1)},$ where $|\xi|>\max\Big\{\frac{|\alpha|}{|x_2|-|x_1|}, \frac{1}{|x_2|}\Big\},$ and $\eta=|\eta|e^{-i\theta_1},$ where $|\eta|>\Big\{\frac{|\beta|}{|x_2|},\frac{1}{|x_2|-|x_1|}\Big\}.$ Note that  $(y+\xi x,z+\eta x)$ is a parallel pair in $\mathcal{X}.$
Thus $(Ty,Tz)$ is a parallel pair in $\mathcal{Y},$ and so by Lemma \ref{lem-pdim}, $\{Te_1,Te_3\}$ is linearly dependent, proving that $rank(T)\leq 1.$ 
	\end{proof}

Next we show that, in particular,  if $\mathcal{Y}\in\{\ell_1,\ell_1^m:m\geq 2\},$ and either $\mathcal{X}\neq \ell_\infty^2$ or the field is complex, then the parallel pair preservers $ T\in\mathcal{L}(\mathcal{X},\mathcal{Y})$ are characterized by $rank(T)\leq 1.$ 
The following lemma is required for this purpose, which is of independent interest. 
Suppose $T:\mathcal{X}\to \mathcal{Y}\oplus_1 \mathcal{Z}.$ Let   $T_1:\mathcal{X}\to \mathcal{Y}, T_2:\mathcal{X}\to \mathcal{Z}$ be  the projections of $T$ onto the Banach spaces $\mathcal{Y}$ and $\mathcal{Z},$ respectively, that is,  $Tx=(T_1x,T_2x)$ for all $x\in \mathcal{X}.$ Then we write $T=(T_1,T_2).$
\begin{lemma}\label{lem-ngen}
	Suppose $T:\mathcal{X}\to \mathcal{Y}\oplus_1 \mathcal{Z}$ preserves parallel pairs, and $T=(T_1,T_2).$ Then $T_1,T_2$ preserves parallel pairs.
\end{lemma}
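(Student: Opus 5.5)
The plan is to work straight from the definition of a parallel pair and the additivity of the norm in $\mathcal{Y}\oplus_1\mathcal{Z}$, namely $\|(y,z)\|=\|y\|+\|z\|$. Let $(x,w)$ be a parallel pair in $\mathcal{X}$. Since $T$ preserves parallel pairs, there is $\lambda\in\mathbb{T}$ with
\[
\|Tx+\lambda Tw\|=\|Tx\|+\|Tw\|.
\]
Writing $Tx=(T_1x,T_2x)$ and $Tw=(T_1w,T_2w)$ and expanding both sides in the $\ell_1$-sum norm, this equation becomes
\[
\|T_1x+\lambda T_1w\|+\|T_2x+\lambda T_2w\|=\|T_1x\|+\|T_1w\|+\|T_2x\|+\|T_2w\|.
\]

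Next apply the triangle inequality separately in each component. It gives $\|T_1x+\lambda T_1w\|\le \|T_1x\|+\|T_1w\|$, and likewise $\|T_2x+\lambda T_2w\|\le\|T_2x\|+\|T_2w\|$, using $|\lambda|=1$. The sum of the two left-hand sides equals the sum of the two right-hand sides. So if either inequality were strict, the sum would be strictly smaller, which is impossible. Hence both are equalities, with the \emph{same} $\lambda$:
\[
\|T_1x+\lambda T_1w\|=\|T_1x\|+\|T_1w\|,\qquad \|T_2x+\lambda T_2w\|=\|T_2x\|+\|T_2w\|.
\]
These say exactly that $(T_1x,T_1w)$ is a parallel pair in $\mathcal{Y}$ and $(T_2x,T_2w)$ is a parallel pair in $\mathcal{Z}$. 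Therefore $T_1$ and $T_2$ preserve parallel pairs.

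There is no real obstacle here; the only point needing care is that a single unimodular $\lambda$ works for both components simultaneously. That comes for free from the squeeze argument above. The same argument with $\lambda=1$ shows that $T_1$ and $T_2$ preserve TEA pairs whenever $T$ does, which is worth recording for use later in the section.
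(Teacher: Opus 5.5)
Your proof is correct. It runs the same squeeze as the paper, but on the primal side rather than the dual side.

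The paper fixes a norm-one $f=(f_1,f_2)\in\mathcal{Y}^*\oplus_\infty\mathcal{Z}^*$ with $f(Tx)=\|Tx\|$ and $f(\lambda Tw)=\|Tw\|$. From $\|T_1x\|+\|T_2x\|=f_1(T_1x)+f_2(T_2x)\le\|T_1x\|+\|T_2x\|$ (and likewise for $\lambda Tw$) it concludes that each $f_i$ supports both $T_ix$ and $\lambda T_iw$.

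Your version uses only the triangle inequality and the definition of the $\ell_1$-sum norm. It therefore needs neither the identification of $(\mathcal{Y}\oplus_1\mathcal{Z})^*$ nor the existence of supporting functionals. It also avoids the paper's assertion that $\|f_i\|=1$, which need not hold: if $T_ix=T_iw=0$, the chosen $f$ may have $f_i=0$. Only $\|f_i\|\le 1$ is actually used there.

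The dual form does hand over explicit common supporting functionals, and that is the shape in which the lemma is reused in the proof of Theorem \ref{th-ncomp}. Your norm equalities, with a single $\lambda$ for both components, would serve there equally well.

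Your closing remark that taking $\lambda=1$ gives the TEA analogue is also correct.
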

\begin{proof}
	Suppose $(x,w)$ is a parallel pair in $\mathcal{X}.$ Then $(Tx,Tw)$ is a parallel pair in $\mathcal{Y}\oplus_1 \mathcal{Z}.$ Suppose for $\lambda\in \mathbb{T},$ and $f\in (\mathcal{Y}\oplus_1 \mathcal{Z})^*=\mathcal{Y}^*\oplus_\infty \mathcal{Z}^*,$ 
	\[\|f\|=1,~f(Tx)=\|Tx\|, \text{ and } f(\lambda Tw)=\|Tw\|.\]	Let $f=(f_1,f_2),$ where $f_1\in \mathcal{Y}^*, f_2\in \mathcal{Z}^*.$ Then
	\[\|T_1x\|+\|T_2x\|=\|Tx\|=f(Tx)=f_1(T_1x)+f_2(T_2x)\leq \|T_1x\|+\|T_2x\|,	\]
	which implies that $f_i(T_ix)=\|T_ix\|,$ and so $\|f_i\|=1$ for $i=1,2.$ Similarly, $f_i(\lambda T_iw)=\|T_iw\|,$ for $i=1,2.$ Therefore $(T_ix,T_iw)$ is a parallel pair. Thus $T_i$ preserves parallel pairs, for $i=1,2.$
\end{proof}
We further need the  observation that if $x\in E_{\ell_\infty^n},$ then $(x,z)$ is a parallel pair in $\ell_\infty^n$ for all $z\in \ell_\infty^n.$ This also holds in the infinite-dimensional setting. Indeed suppose $x=(e^{i\phi_1},e^{i\phi_2},\ldots)\in E_{\ell_\infty},$ and  $z=(|z_1|e^{i\theta_1},|z_2|e^{i\theta_2},\ldots)\in \ell_\infty.$ If $\|z\|$ is attained at some coordinate, then clearly $(x,z)$ is a parallel pair in $\ell_\infty.$ Suppose $\|z\|$ is not attained at any coordinate. Without loss of generality, (if necessary passing through subsequences) assume that $$\|z\|=\lim_{n\to \infty}|z_n|, \quad\lim_{n\to\infty} e^{i\theta_n}=e^{i\theta}, \quad\lim_{n\to\infty} e^{i\phi_n}=e^{i\phi}.$$ Then for $\lambda=e^{i(\theta-\phi)},$ $\|z+\lambda x\|=\|z\|+\|x\|,$ consequently $(x,z)$ is a parallel pair in $\ell_\infty.$
Now we characterize the parallel  pair preservers when $\mathcal{X}\neq \ell_\infty^2.$
\begin{theorem}\label{th-inf}
		Suppose $T\in \mathcal{L}(\mathcal{X},\mathcal{Y})$, where  $\mathcal{X}\in\{\ell_\infty^n,\ell_\infty:n\geq 2\},$  and $\mathcal{Y}\in\{\ell_1^m,\ell_1:m\geq 2\}.$  
		\begin{enumerate}
			\item If $T$ preserves parallel pairs, then there exist subspaces $V$ and $W$ of $\mathcal{Y}$ such that  $range(T)\subseteq V\oplus_1 W.$ Moreover, if $T=(T_1,T_2),$ where $T_1:\mathcal{X}\to V,$ and $T_2:\mathcal{X}\to W$  are projections of $T$ onto $V$ and $W,$ respectively, then $rank(T_i)\leq 1,$ for $i=1,2.$
			\item If $\mathcal{X}\neq \ell_\infty^2,$ then $T$ preserves parallel pairs if and only if $rank(T)\leq 1.$
			\end{enumerate}
\end{theorem}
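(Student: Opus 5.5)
The plan is to exploit the fact, observed just before the statement, that the all-ones vector $e=(1,1,\ldots)\in E_{\mathcal{X}}$ forms a parallel pair with every $z\in\mathcal{X}$. Hence $(Te,Tz)$ is a parallel pair in $\mathcal{Y}$ for every $z$. If $Te=0$ then $\ker(T)\neq\{0\}$, and Theorem \ref{th-ncom} gives $rank(T)\leq 1$, so (1) holds trivially with $W=\{0\}$. Assume therefore $u:=Te\neq 0$, and let $S=\{k:u_k\neq 0\}$ be its support. Take $V=\{y\in\mathcal{Y}:y_k=0 \text{ for } k\notin S\}$ and $W=\{y\in\mathcal{Y}:y_k=0\text{ for }k\in S\}$. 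Then $\mathcal{Y}=V\oplus_1 W$, so $range(T)\subseteq V\oplus_1W$. Write $T=(T_1,T_2)$ accordingly.

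\textbf{Rank of $T_1$.} For $k\in S$ define the linear functionals $r_k(z)=(Tz)_k/u_k$, so that $r_k(e)=1$. By the $\ell_1$ characterization of parallel pairs, for each $z$ there is $\lambda\in\mathbb{T}$ with $\lambda\overline{u_k}(Tz)_k\geq 0$ for all $k$, that is, $\lambda |u_k|^2 r_k(z)\geq 0$. So all the numbers $r_k(z)$, $k\in S$, lie on one closed ray from $0$, and consequently $r_k(z)\overline{r_l(z)}\geq 0$ for all $z$ and all $k,l\in S$.

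I would then prove the elementary fact that two linear functionals $f,g$ with $f(z)\overline{g(z)}\geq 0$ for all $z$ are linearly dependent. Suppose not; pick $z,w$ with $f(z)=1$, $g(z)=0$, $f(w)=0$, $g(w)=1$. Over $\mathbb{R}$, the vector $z-w$ gives $f\overline{g}=-1<0$. Over $\mathbb{C}$, the vector $z+iw$ gives $f\overline{g}=-i\notin[0,\infty)$. Either way we get a contradiction. Hence $r_k=c\,r_l$, and the condition $f\overline{g}\geq 0$ forces $c\geq 0$. Evaluating at $e$ gives $c=1$, so all $r_k$ coincide; call the common functional $r$. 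Then $T_1z=r(z)\,u$, and $rank(T_1)\leq 1$.

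\textbf{Rank of $T_2$.} By Lemma \ref{lem-ngen}, $T_2:\mathcal{X}\to W$ preserves parallel pairs. Also $T_2e=0$, so $\ker(T_2)\neq\{0\}$. Theorem \ref{th-ncom} then yields $rank(T_2)\leq 1$, which completes (1).

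\textbf{Part (2).} Sufficiency is trivial, since rank one maps preserve parallel pairs. For necessity, part (1) gives $rank(T)\leq rank(T_1)+rank(T_2)\leq 2$. When $\mathcal{X}=\ell_\infty^n$ with $n\geq 3$, or $\mathcal{X}=\ell_\infty$, we have $\dim\mathcal{X}>2$, so $\ker(T)\neq\{0\}$. Theorem \ref{th-ncom} then gives $rank(T)\leq 1$.

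The main point needing care is the functional-dependence lemma: it must be checked over both scalar fields, and the positivity of the proportionality constant must be extracted so that $r_k(e)=1$ pins everything to a single functional. The rest is bookkeeping with the $\ell_1$ support decomposition and the earlier results.
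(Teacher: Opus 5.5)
Your proof is correct and follows the paper's argument. You take an extreme point of the unit ball (the all-ones vector), split $\mathcal{Y}$ along the support of its image, and show that the support part of $Tz$ is a multiple of $Te$. You then handle the complementary part via Lemma \ref{lem-ngen} and Theorem \ref{th-ncom}, and part (2) follows from $rank(T)\le 2$ together with $\ker(T)\neq\{0\}$.

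The only differences are cosmetic. You take $V$ to be the whole subspace of vectors supported on $S$ rather than $span\{Te\}$. You also get proportionality from the observation that two functionals with $f\overline{g}\ge 0$ pointwise are linearly dependent, whereas the paper tests parallelism against the explicit vector $(|b_j|+|b_k|)x-(|a_j|+|a_k|)\overline{\lambda}z$.
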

\begin{proof}
 $(1)$ Choose $x\in E_{\mathcal{X}}$ arbitrarily. For $k\geq 1,$ let $a_k=|a_k|e^{i\theta_k}$ be the $k$-th coordinate of $Tx.$ Suppose $$\Lambda=\{i\geq 1: a_i\neq 0 \},\text{ and }\Gamma=\{i\geq 1: a_i= 0 \}.$$ If $\Lambda=\emptyset,$ then $x\in \ker(T)$ and so by Theorem \ref{th-ncom}, $rank(T)\leq 1.$ Assume $\Lambda\neq \emptyset.$ Suppose $$V=span\{Tx\}, \quad W=\{y\in \mathcal{Y}:i\text{-th coordinate of }y \text{ is } 0, i\in \Lambda\}.$$ 
We claim that $range(T)\subseteq V\oplus_1 W.$ Let $z\in \mathcal{X}.$ Since $(x,z)$ is a parallel pair in $\mathcal{X},$ so is $(Tx,Tz)$ in $\mathcal{Y}.$ Suppose $b_k$ is the $k$-th coordinate of $Tz.$ Then there exists $\lambda\in \mathbb{T}$ such that $\lambda a_k\overline{b_k}\geq 0,$  that is, $\overline{\lambda}b_k= |b_k|e^{i\theta_k}$ for all $k\in \Lambda.$ If $b_k=0$ for all $k\in \Lambda,$ then clearly $Tz\in W.$ Suppose $b_j\neq 0$ for some $j\in \Lambda.$ Note that $|a_kb_j|=|a_jb_k|$ for all $k\in \Lambda.$ Indeed, suppose $\xi=|a_kb_j|-|a_jb_k|$ for some $k\in \Lambda.$ Let  
$$u=(|b_j|+|b_k|)x-(|a_j|+|a_k|)\overline{\lambda} z.$$ 
Suppose $c_i$ denotes the $i$-th coordinate of $Tu.$ 
Then  $c_j=-\xi e^{i\theta_j},$ and $c_k=\xi e^{i\theta_k}.$ Since $(x,u)$ is a parallel pair in $\mathcal{X},$ so is $(Tx,Tu)$ in $\mathcal{Y}.$ However, since $a_j\overline{c_j}=-\xi |a_j|,$ and $a_k\overline{c_k}=\xi |a_k|,$ if $\xi\neq 0,$ then there does not exist $\mu\in \mathbb{T}$ such that $\mu a_i\overline{c_i}\geq 0$ for all $i\geq 1,$ contradicting that $(Tx,Tu)$  is a parallel pair in $\mathcal{Y}.$ Thus $|a_kb_j|=|a_jb_k|,$ and so the $k$-th coordinate of $\frac{\overline{\lambda}}{|b_j|}Tz$ is $|\frac{b_k}{b_j}|e^{i\theta_k}=|\frac{a_k}{a_j}|e^{i\theta_k}$ for all $k\in \Lambda.$ Hence $\frac{\overline{\lambda}}{|b_j|}Tz=\frac{1}{|a_j|}Tx+y,$ for some $y\in W.$ This proves that $range(T)\subseteq V\oplus_1 W,$ and so $T:\mathcal{X}\to V\oplus_1 W $ preserves parallel pairs. Therefore by Lemma \ref{lem-ngen}, $T_1:\mathcal{X}\to V,$ and $T_2:\mathcal{X}\to W$ preserve parallel pairs, where $T_1,T_2$ are projections of $T$ onto $V$ and $W,$ respectively. Clearly $rank(T_1)\leq 1.$ On the other hand, since $T_2x=0,$ by Theorem \ref{th-ncom}, $rank(T_2)\leq 1.$\\

$(2)$ The sufficient part is trivial. For the necessary part, assume $T$ preserves parallel pairs. Consider $T_1,T_2, V,W$ as in $(1).$ Suppose $$T_1(\cdot)=f_1(\cdot)v, \text{ and }T_2(\cdot)=f_2(\cdot)w,$$ where $f_1,f_2\in \mathcal{X}^*,$ $v\in V,w\in W.$ Since $\ker(f_1)\cap \ker(f_2)\subseteq \ker(T),$  from Theorem \ref{th-ncom}, $rank(T)\leq 1.$ 
\end{proof}

Our next goal is to characterize  the parallel  pair preservers when $\mathcal{X}= \ell_\infty^2.$ In this case, the description of parallel pair preserving maps are different for real and complex scalars. We first deal with the complex scalars.
The following lemma is required for this.

\begin{lemma}\label{lem-ninv}
	Assume $\mathbb{F}=\mathbb{C}.$ 
	Suppose $T\in \mathcal{L}(\ell_\infty^2, \ell_1^2)$ preserves parallel pairs.  Then $T$ is not invertible.	
\end{lemma}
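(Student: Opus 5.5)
The plan is to apply the structure in Theorem \ref{th-inf}(1) to a continuum of extreme points of the unit ball of $\ell_\infty^2$. Over $\mathbb{C}$ there are infinitely many such points, and this is exactly what fails over $\mathbb{R}$. Suppose, for contradiction, that $T$ is invertible, so $rank(T)=2$ and $Tx\neq 0$ for every $x\neq 0$. Write $Te_1=(t_{11},t_{21})$ and $Te_2=(t_{12},t_{22})$. Invertibility means neither row of the matrix of $T$ vanishes, i.e. $(t_{k1},t_{k2})\neq(0,0)$ for $k=1,2$.

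\textbf{Step 1.} Fix an extreme point $x\in E_{\ell_\infty^2}$ and let $\Lambda=\{k: (Tx)_k\neq 0\}$. I will rerun the argument in the proof of Theorem \ref{th-inf}(1) with this $x$. It shows that $range(T)\subseteq span\{Tx\}\oplus_1 W$, where $W$ consists of the vectors of $\ell_1^2$ vanishing on $\Lambda$. That argument uses only two facts: $(x,z)$ is a parallel pair for every $z$, since $x$ is extreme, and the coordinate criterion for parallel pairs in $\ell_1^m$.

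If $|\Lambda|=2$, then $W=\{0\}$ and $range(T)\subseteq span\{Tx\}$, so $rank(T)\leq 1$, a contradiction. Since $Tx\neq 0$, we conclude that $|\Lambda|=1$ for every extreme point $x$. That is, $Tx$ has exactly one zero coordinate.

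\textbf{Step 2.} Over $\mathbb{C}$ the extreme points include $x_\theta=(1,e^{i\theta})$ for all $\theta\in[0,2\pi)$. By Step 1, for each $\theta$ there is some $k\in\{1,2\}$ with
\[t_{k1}+e^{i\theta}t_{k2}=0.\]
For fixed $k$, with $(t_{k1},t_{k2})\neq(0,0)$, this equation has at most one solution $e^{i\theta}\in\mathbb{T}$: if $t_{k2}=0$ there is none, and otherwise $e^{i\theta}=-t_{k1}/t_{k2}$ is forced. So at most two values of $\theta$ are covered, while $\mathbb{T}$ is infinite. This contradiction shows that $T$ is not invertible.

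The only delicate point is Step 1. Theorem \ref{th-inf}(1) is stated as an existence result, so I must use the concrete $V=span\{Tx\}$ and $W$ from its proof rather than the bare statement. I also need to check that the proof applies to any chosen extreme point $x$. It does: in that proof $x\in E_{\mathcal{X}}$ was arbitrary, and the only reduction invoked, the case $\Lambda=\emptyset$, is excluded here by invertibility.
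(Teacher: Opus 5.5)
Your proof is correct and has the same two-step shape as the paper's. First, every extreme point $x$ of the unit ball of $\ell_\infty^2$ is sent to a nonzero multiple of $e_1$ or $e_2$. Second, over $\mathbb{C}$ there are more than two pairwise independent extreme points, so by pigeonhole two of them land on the same axis, contradicting injectivity. For the second step the paper uses only the three points $(1,1),(1,-1),(1,i)$. You use the whole circle $(1,e^{i\theta})$ and note that $t_{k1}+e^{i\theta}t_{k2}=0$ has at most one solution for each $k$; this is the same count.

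The real difference is in how you justify Step 1. You rerun the range-splitting argument from the proof of Theorem \ref{th-inf}(1) and rule out $|\Lambda|=2$ because it would force $rank(T)\le 1$. That is valid, and you correctly note that you must look inside that proof rather than cite its statement. The paper instead argues directly from surjectivity. Since $(x,y)$ is a parallel pair for every $y$ and $range(T)=\ell_1^2$, the pair $(Tx,z)$ is parallel for every $z\in\ell_1^2$. If $Tx=(a_1,a_2)$ with $a_1a_2\neq 0$, then $z=(a_1,-a_2)$ violates the coordinate criterion for parallel pairs in $\ell_1^2$.

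The paper's route is shorter and self-contained. Yours ties the lemma to the general structure result for $\ell_1^m$-valued maps, but in this square setting it gains no extra generality.
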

\begin{proof}
	Suppose for a contradiction that $T$ is invertible. Let $A=\{(1,1),(1,-1)\}.$   
	For all $x\in E_{\ell_\infty^2},$ and $y\in \ell_\infty^2,$ $(x,y)$ is a parallel pair in $\ell_\infty^2,$ and so is $(Tx,Ty)$ in $\ell_1^2.$ Since $range(T)=\ell_1^2,$ $(Tx,z)$ is a parallel pair in $\ell_1^2$ for all $z\in \ell_1^2.$ We claim that $\frac{Tx}{\|Tx\|}\in E_{\ell_1^2},$ where $x\in E_{\ell_\infty^2}.$ Without loss of generality, assume $\|Tx\|=1.$ Suppose for a contradiction $Tx\notin E_{\ell_1^2}.$ Then $Tx=(a_1,a_2),$ where  $a_1a_2\neq 0.$ For $u=(a_1,-a_2)\in \ell_1^2,$ $(Tx,u)$ is a parallel pair in $\ell_1^2.$  This contradiction proves the claim. Therefore, $\{\frac{T(1,1)}{\|T(1,1)\|},\frac{T(1,-1)}{\|T(1,-1)\|}\}$ is a linearly independent subset of $E_{\ell_1^2}=\{\mu e_i:i=1,2,\mu\in \mathbb{T}\}.$ 
	Since $e=(1,i)\in E_{\ell_\infty^2},$  $\{Te,Tx\}$ must be linearly dependent for some $x\in\{(1,1),(1,-1)\},$ whereas $\{e,x\}$ is linearly independent. This contradicts that $T$ is invertible.
\end{proof}

 The next theorem characterizes the parallel  pair preservers, when  $\mathcal{X}=\ell_\infty^2,$ and the scalars are complex.
\begin{theorem}\label{th-ncomp}
	Assume $\mathbb{F}=\mathbb{C},$ and $\mathcal{Y}\in\{\ell_1,\ell_1^m\}.$ 
	Then  $T\in\mathcal{L}(\ell_\infty^2, \mathcal{Y})$ preserves parallel pairs if and only $rank(T)\leq 1.$
\end{theorem}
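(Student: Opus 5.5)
Sufficiency is trivial, since rank one maps always preserve parallel pairs. For necessity, suppose $T\in\mathcal{L}(\ell_\infty^2,\mathcal{Y})$ preserves parallel pairs. By Theorem \ref{th-inf}(1), there are subspaces $V,W$ of $\mathcal{Y}$ with $range(T)\subseteq V\oplus_1 W$. Writing $T=(T_1,T_2)$, the projections satisfy $rank(T_1)\leq 1$ and $rank(T_2)\leq 1$. If $\ker(T)\neq\{0\}$, Theorem \ref{th-ncom} gives $rank(T)\leq 1$ directly. So we may assume $T$ is injective, hence $rank(T)=2$, and aim for a contradiction.

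In that case write $T_1(\cdot)=f_1(\cdot)v$ and $T_2(\cdot)=f_2(\cdot)w$ with $v\in V$, $w\in W$ nonzero. Injectivity forces $f_1,f_2\in(\ell_\infty^2)^*$ to be linearly independent. The map $S:\ell_\infty^2\to\ell_1^2$, $S(x)=(f_1(x)\|v\|,f_2(x)\|w\|)$, is then invertible. It also preserves parallel pairs, because $\mathrm{span}\{v\}\oplus_1\mathrm{span}\{w\}$ is isometrically isomorphic to $\ell_1^2$ via $(av,bw)\mapsto(a\|v\|,b\|w\|)$. The key point to check is that $range(T)$ sits isometrically inside $\mathrm{span}\{v\}\oplus_1\mathrm{span}\{w\}\subseteq V\oplus_1 W$: for $Tx=f_1(x)v+f_2(x)w$ we have $\|Tx\|=|f_1(x)|\|v\|+|f_2(x)|\|w\|$. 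Since $T$ preserves parallel pairs into $\mathcal{Y}$ and the norm on $range(T)$ agrees with the $\ell_1^2$ norm of the coordinates, parallelism of $(Tx,Ty)$ in $\mathcal{Y}$ is equivalent to parallelism of $(Sx,Sy)$ in $\ell_1^2$. This is because parallelism of two vectors depends only on the norms of their linear combinations, all of which lie in $range(T)$.

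Lemma \ref{lem-ninv} now says that no invertible parallel pair preserver $\ell_\infty^2\to\ell_1^2$ exists over $\mathbb{C}$, which is the contradiction. Hence $T$ is not injective, and Theorem \ref{th-ncom} yields $rank(T)\leq 1$.

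The main point needing care is the identification in the second paragraph. It has to be justified that the decomposition from Theorem \ref{th-inf}(1) really places $T_1$ and $T_2$ in complementary $\ell_1$-summands, so that the norm splits additively. This holds by construction, since $V=\mathrm{span}\{Tx_0\}$ is supported on $\Lambda$ and $W$ on the complement of $\Lambda$. Everything else is a direct appeal to Lemma \ref{lem-ninv} and Theorem \ref{th-ncom}. The same argument covers both $\mathcal{Y}=\ell_1$ and $\mathcal{Y}=\ell_1^m$.
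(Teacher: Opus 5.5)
Your proof is correct, but for targets other than $\ell_1^2$ it takes a different route from the paper.

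The paper uses Lemma \ref{lem-ninv} only when $\mathcal{Y}=\ell_1^2$. For the other targets it takes the decomposition $T_i=f_i(\cdot)z_i$ from Theorem \ref{th-inf}(1) and picks an extreme point $x$ of the unit ball of $\ell_\infty^2$ with $f_1(x)f_2(x)\neq 0$. Since $(x,y)$ is parallel for every $y$, and a norming functional on $V\oplus_1 W$ splits with a common unimodular $\lambda$ (as in Lemma \ref{lem-ngen}), it derives $\mu f_1(y)\overline{f_2(y)}\geq 0$ for all $y$. Testing $y=e_1,e_2,(1,1),(1,i)$ then forces $f_1,f_2$ to be linearly dependent.

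You instead observe that $range(T)\subseteq \mathrm{span}\{v\}\oplus_1\mathrm{span}\{w\}$, which is isometrically $\ell_1^2$. You correctly note that parallelism of $(Tx,Ty)$ only involves norms of vectors in $range(T)$. So composing with this isometry gives an invertible parallel pair preserver $\ell_\infty^2\to\ell_1^2$, which contradicts Lemma \ref{lem-ninv}.

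Your route is shorter and uniform in $\mathcal{Y}$, with no separate treatment of $\ell_1^2$. It also makes explicit that the whole obstruction lives in $\ell_1^2$. The paper's computation is more hands-on and essentially re-derives the content of Lemma \ref{lem-ninv} in terms of the coefficients of $f_1,f_2$. The degenerate case $\mathcal{Y}=\ell_1^1$, which Theorem \ref{th-inf} does not cover, is trivial.
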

\begin{proof}
	If $rank(T)\leq 1,$ then clearly $T$ preserves parallel pairs. Conversely assume that $T$ preserves parallel pairs. If $\mathcal{Y}=\ell_1^2,$ then by Lemma \ref{lem-ninv}, $\ker(T)\neq\{0\},$ and so  by Theorem \ref{th-ncom}, $rank(T)\leq 1.$ \\
	Now consider $\mathcal{Y}\neq \ell_1^2.$  Following the notation of Theorem \ref{th-inf} part $(1)$, $rank(T_i)\leq 1$ for $i=1,2.$ Thus   there exist $z_1\in V,z_2\in W, f_1,f_2\in (\ell_\infty^2)^*$ such that $$T_i(x)=f_i(x)z_i \text{ for all }x\in \ell_\infty^2, \text{ and }i=1,2.$$ If either $f_1$ or $f_2$ is zero, then $rank(T)\leq 1.$ Assume 
	\[f_1=(a,b), f_2=(c,d)\in \ell_1^2\setminus\{(0,0)\}.\]
	Choose $x\in E_{\ell_\infty^2}$ such that $f_1(x)\neq 0, f_2(x)\neq 0.$ Then for all $y\in \ell_\infty^2,$  $(x,y)$ is a parallel pair in $\ell_\infty^2,$ and so $(T_ix,T_iy)$ is a parallel pair 
	for $i=1,2.$ Proceeding similarly as Lemma \ref{lem-ngen}, we have $\lambda\in \mathbb{T},$ and $g_1\in V^*, g_2\in W^*$ such that 
	\[\|g_i\|=1,~g_i(T_ix)=\|T_ix\|, \text{ and } g_i(\lambda T_iy)=\|T_iy\|, \text{ for } i=1,2.\] Now, 
	\[\|T_i(x)\|=g_i(T_ix)\Rightarrow |f_i(x)|\|z_i\|=f_i(x)g_i(z_i),\]
	and so there exist $\mu_i\in \mathbb{T}$ such that $g_i(z_i)=\mu_i\|z_i\|,$ and $f_i(x)=\overline{\mu_i}|f_i(x)|.$
	Similarly, $$ g_i(\lambda T_iy)=\|T_iy\|\Rightarrow \lambda f_i(y)g_i(z_i)=|f_i(y)|\|z_i\|\Rightarrow \lambda f_i(y)=\overline{\mu_i}|f_i(y)|.$$ Therefore, 
	\begin{equation}\label{eq-nr1}
		\mu f_1(y)\overline{f_2(y)}=|f_1(y)f_2(y)|\geq 0, \text{ for all } y\in \ell_\infty^2, \text{ where } \mu=\mu_1\overline{\mu_2}. 
	\end{equation}
In \eqref{eq-nr1}, putting $y=e_i,$ $i=1,2,$ we get $\mu a\overline{c}\geq 0,$ and $\mu b\overline{d}\geq0.$
Moreover, for all $y=(1,t)\in \ell_\infty^2,$
\begin{eqnarray*}
	\mu f_1(y)\overline{f_2(y)}\geq 0 &\Rightarrow & \mu\Big(a\overline{c}+|t|^2b\overline{d}+tb\overline{c}+a\overline{dt}\Big)\geq0,
\end{eqnarray*}
	which implies in particular  for $t\in \{1,i\},$
	\[\Im(\mu b\overline{c}+\mu a\overline{d})=\Re(\mu b\overline{c}-\mu a\overline{d})=0\quad\Rightarrow \quad \mu b\overline{c}=\overline{\mu a}d.\]
	If $abcd=0,$ then it is straightforward to check that $\{f_1,f_2\}$ is linearly dependent.  If $abcd\neq 0,$ suppose $\mu a=|a|e^{i\theta},$ and $c=|c|e^{i\theta},$ since $\mu a\overline{c}\geq 0.$ Then $f_2=\frac{\mu \overline{c}}{\overline{\mu a}}f_1,$
	which implies that $\{f_1,f_2\}$ is linearly dependent. In each case, without loss of generality, assume $f_1=\alpha f_2.$ Then $T(x)=f_2(x)(\alpha z_1,z_2)$ for all $x\in \ell_\infty^2,$ and so $rank(T)\leq 1.$
\end{proof}

Now we deal with the real scalars. For this purpose we require the characterization of parallel (or TEA) pair preservers in $\mathcal{L}(\ell_1^n,\ell_1^m),$ which can be found for $n=m$ in \cite[Theorem 2.2]{LTWW}. Using this,  it is easy to observe that the same characterization also holds for $n\neq m.$ For the sake of completeness, we include a proof here.
\begin{theorem}\label{th-li}
Suppose $T\in\mathcal{L}(\ell_1^n,\ell_1^m).$  Then 
\begin{enumerate}
\item $T$ preserves TEA pairs if and only if each row of $T$ has at most one non-zero entry.
\item 	$T$ preserves parallel pairs if and only if each row of $T$ has at most one non-zero entry or $rank(T)=1$.
\end{enumerate}	
\end{theorem}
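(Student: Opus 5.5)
Write $T=(t_{ij})$, so that $Te_j=\sum_i t_{ij}e_i$ and $(Tx)_i=\sum_j t_{ij}x_j$. The whole argument rests on the coordinate description of parallel and TEA pairs in $\ell_1^k$ recalled in the introduction: $(u,v)$ is a TEA pair if and only if $\overline{u_k}v_k\geq 0$ for all $k$, and a parallel pair if this holds after replacing $v$ by $\lambda v$ for some $\lambda\in\mathbb{T}$.

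\emph{Sufficiency.} Suppose each row of $T$ has at most one non-zero entry. Then each coordinate of $Tx$ has the form $t_{ij(i)}x_{j(i)}$, or is zero. If $(x,y)$ is a TEA pair in $\ell_1^n$, then $\overline{x_j}y_j\geq 0$ for all $j$, and hence
\[\overline{(Tx)_i}(Ty)_i=|t_{ij(i)}|^2\,\overline{x_{j(i)}}y_{j(i)}\geq 0 .\]
So $(Tx,Ty)$ is a TEA pair. The same computation with $\lambda y$ in place of $y$ shows that parallel pairs are preserved. Rank one maps preserve parallel pairs trivially.

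\emph{Necessity in (1).} Suppose $T$ preserves TEA pairs and some row $i$ contains two non-zero entries $t_{ij}$ and $t_{ik}$ with $j\neq k$. I would test the pair $x=e_j$, $y=\alpha e_j+\beta e_k$ with $\alpha>0$. This is a TEA pair in $\ell_1^n$ for every $\beta$, since $\overline{x_k}y_k=0$ in the $k$-th coordinate. The $i$-th coordinates give
\[\overline{t_{ij}}\,(\alpha t_{ij}+\beta t_{ik})=\alpha|t_{ij}|^2+\beta\,\overline{t_{ij}}t_{ik},\]
and this must be $\geq 0$. Choosing $\beta$ with $\beta\overline{t_{ij}}t_{ik}$ negative real of large modulus yields a contradiction.

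\emph{Necessity in (2).} This is the main obstacle. Assume $T$ preserves parallel pairs and $rank(T)\geq 2$; I must show every row has at most one non-zero entry.

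First I would take $x=e_j$ and $y=e_j+\beta e_k$. This is again a TEA, hence parallel, pair. Parallelism of $(Te_j,Te_j+\beta Te_k)$ gives a unimodular $\lambda(\beta)$ with
\[\lambda(\beta)\,\overline{t_{rj}}\,(t_{rj}+\beta t_{rk})\geq 0 \quad\text{for all } r.\]
Letting $|\beta|\to\infty$ in various directions shows that on the common support $\{r: t_{rj}t_{rk}\neq 0\}$ the quotients $t_{rk}/t_{rj}$ all have the same argument.

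To force equal moduli as well, I would use a second family of pairs with both entries supported on $\{j,k\}$, say $(e_j+\gamma e_k,\ e_j+\delta e_k)$ with $\gamma,\delta>0$, which is TEA. This forces $t_{rj}+\gamma t_{rk}$ and $t_{rj}+\delta t_{rk}$ to have aligned phases across $r$, after a common rotation. I expect this to imply that the columns $j,k$ restricted to the common support are proportional, mirroring the $|a_kb_j|=|a_jb_k|$ argument of Theorem \ref{th-inf}. One can also decompose $\ell_1^m$ as the $\ell_1$-sum of coordinate blocks and apply Lemma \ref{lem-ngen} to the block on which both columns are non-zero.

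Next I would show that if row $r$ has two non-zero entries, then all columns are multiples of a single vector; that is, $T$ has rank one. The idea is to combine proportionality on overlapping supports with test vectors of the form $x=e_j+\epsilon e_l$, which lets a third column interact with row $r$. Since $m\neq n$ is allowed, I would either pad $T$ with zero rows or columns to reduce to the square case of \cite[Theorem 2.2]{LTWW}, noting that padding preserves both properties, or else redo that argument directly.

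The delicate step is this last propagation from one row with two non-zero entries to global rank one. I would try first the padding reduction, since it avoids the computation entirely.
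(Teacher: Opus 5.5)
Your preferred route for the hard direction (necessity in (2)) is exactly the paper's. For $n\neq m$ the paper pads $T$ with zero columns ($Re_i=Te_i$ for $i\le n$, $Re_i=0$ otherwise) or with zero rows ($Pe_i=(Te_i,0)$), and then applies \cite[Theorem 2.2]{LTWW} to the square matrix.

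You assert without argument that padding does not change whether parallel (TEA) pairs are preserved. That claim is the entire content of the paper's proof, so you should write it out. Let $\iota$ be a coordinate inclusion and $\pi$ a coordinate restriction between $\ell_1$-spaces of the appropriate dimensions. Then $R=T\pi$, $T=R\iota$, $P=\iota T$ and $T=\pi P$. Both $\iota$ and $\pi$ send parallel (TEA) pairs to parallel (TEA) pairs, by the coordinatewise criterion $\lambda\overline{x_k}y_k\ge 0$. Padding also changes neither the rank nor the number of non-zero entries in any row.

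Where you genuinely differ is in (1) and in the sufficiency half of (2). You prove these directly from that criterion, and the arguments are correct. The test pair $(e_j,\alpha e_j+\beta e_k)$ gives a self-contained proof of necessity in (1); the paper instead obtains everything from \cite{LTWW} through the same padding.

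Your direct sketch for (2), as written, is not a proof (``I expect'', ``the delicate step''). It can be finished using only your first family, without \cite{LTWW}.
\begin{itemize}
\item Let $\beta$ range over all scalars in $(e_j,e_j+\beta e_k)$. The non-zero numbers $1+\beta t_{rk}/t_{rj}$, for $r$ in the support of $Te_j$, must share one argument. This forces $Te_k$ either to vanish on that support or to be a constant multiple of $Te_j$ there.
\item By symmetry in $j$ and $k$, any two columns are either disjointly supported or proportional.
\item Suppose some row has non-zero entries in columns $j$ and $k$, so $Te_k=cTe_j$ with $c\neq 0$, and suppose $rank(T)\ge 2$. Pick $l$ with $Te_l\neq 0$ not proportional to $Te_j$; by the previous step it is disjointly supported from $Te_j$.
\item Then $(e_j+e_l,\,c^{-1}e_k-e_l)$ is a parallel pair (take $\lambda=-1$). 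Its image $(Te_j+Te_l,\,Te_j-Te_l)$ is not parallel, since the two supports would force $\lambda=1$ and $\lambda=-1$ respectively.
\end{itemize}
This would make your whole proof independent of \cite{LTWW}, at the cost of slightly more computation than the paper's one-step reduction.
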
 
\begin{proof}
For $n=m,$ the result follows from 	\cite[Theorem 2.2]{LTWW}. Suppose $n<m.$ Define $R:\ell_1^m\to \ell_1^m$ as follows:
\[Re_i=\begin{cases}
	Te_i, &\text{ if } 1\leq i\leq n\\
	0,&\text{ if } n+1\leq i\leq m
	\end{cases}.\] 
Observe that each row of $T$ has at most one non-zero entry if and only if the same holds for $R.$ 
We claim that $R$ preserves parallel (resp. TEA) pairs if and only if $T$ preserves parallel (resp. TEA) pairs. This claim completes the proof.\\
 Suppose $R$ preserves parallel (resp. TEA) pairs.  Let $(x,y)$ be a parallel (resp. TEA) pair in $\ell_1^n.$ Then $(u,v)$ is a parallel (resp. TEA) pair in $\ell_1^m,$ where $u=(x,0), v=(y,0).$ Consequently, $(Ru,Rv)=(Tx,Ty)$ is a parallel (resp. TEA) pair in $\ell_1^m.$ Thus $T$ preserves parallel (resp. TEA) pairs. Conversely, assume $T$ preserves parallel (resp. TEA) pairs. Let $(z,w)$ be a parallel (resp. TEA) pair in $\ell_1^m.$ Suppose $z=(z_1,z_2),$ and $w=(w_1,w_2),$ where $z_1,w_1\in \ell_1^n,$ and $z_2,w_2\in \ell_1^{m-n}$ Clearly, $(z_1,w_1)$ is a parallel (resp. TEA) pair in $\ell_1^n,$ and so is $(Tz_1,Tw_1)$  in $\ell_1^m.$ Since $Rz=Tz_1,$ and $Rw=Tw_1,$  $(Rz,Rw)$ is a parallel  (resp. TEA) pair in $\ell_1^m.$ Thus $R$ preserves parallel  (resp. TEA) pairs. \\
 
 Similarly, for $n>m,$ defining  $P:\ell_1^n\to \ell_1^n$ as 
 $Pe_i=(Te_i,0) \text{ for } 1\leq i\leq n,$ and proceeding similarly, we get the result.
\end{proof}
Proceeding  as Theorem \ref{th-li} and using \cite[Theorem 3.5]{LTWW}, we get the next characterization of parallel (or TEA) pair preservers in $\mathcal{L}(\ell_1^n,\ell_1).$ The proof is omitted to avoid monotony.

\begin{theorem}\label{th-linf}
	Suppose $T\in \mathcal{L}(\ell_1^n,\ell_1),$ and $Te_j=(a_{1j},a_{2j},\ldots)$ for all $1\leq j\leq n.$ Then 
	\begin{enumerate}
		\item $T$ preserves TEA pairs if and only if for each $i\in \mathbb{N},$ there is at most one one $j\in \{1,\ldots,n\}$ such that $a_{ij}\neq 0$.
		\item 	$T$ preserves parallel pairs if and only if either $T$ preserves TEA pairs  or $rank(T)=1$.
	\end{enumerate}		
	\end{theorem}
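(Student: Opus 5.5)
The plan is to follow the template of Theorem \ref{th-li}. We reduce to \cite[Theorem 3.5]{LTWW}, which we take to characterize parallel and TEA pair preservers in $\mathcal{L}(\ell_1,\ell_1)$ in the stated form: TEA preservers are exactly those whose rows (coordinates of the range) each involve at most one column, and parallel preservers are those or rank one maps. Given $T\in\mathcal{L}(\ell_1^n,\ell_1)$, we extend it to $R\in\mathcal{L}(\ell_1,\ell_1)$ by setting $Re_j=Te_j$ for $1\le j\le n$ and $Re_j=0$ for $j>n$. This $R$ is bounded, with $\|R\|=\|T\|$, since its norm is the supremum of column norms. The support condition on rows (for each $i$, at most one $j$ with $a_{ij}\neq 0$) holds for $R$ if and only if it holds for $T$, since the added columns are zero. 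Likewise $rank(R)=rank(T)$.

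The key claim is that $R$ preserves parallel (resp. TEA) pairs if and only if $T$ does.

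\textbf{If $R$ preserves, then $T$ preserves.} Let $(x,y)$ be a parallel (resp. TEA) pair in $\ell_1^n$. Padding by zeros gives $u=(x,0,0,\ldots)$ and $v=(y,0,0,\ldots)$ in $\ell_1$. The coordinatewise criterion $\lambda\overline{x_k}y_k\ge 0$ is unaffected by zero coordinates, and the same criterion characterizes parallel/TEA pairs in $\ell_1$, because $\|u+\lambda v\|=\|u\|+\|v\|$ is equivalent to equality in each coordinate triangle inequality. Hence $(u,v)$ is parallel (resp. TEA) in $\ell_1$, and so is $(Ru,Rv)=(Tx,Ty)$.

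\textbf{If $T$ preserves, then $R$ preserves.} Let $(z,w)$ be a parallel (resp. TEA) pair in $\ell_1$, and let $z_1,w_1\in\ell_1^n$ be their first $n$ coordinates. The coordinatewise criterion restricts to the truncations, so $(z_1,w_1)$ is a parallel (resp. TEA) pair in $\ell_1^n$. Therefore $(Tz_1,Tw_1)=(Rz,Rw)$ is a parallel (resp. TEA) pair in $\ell_1$.

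Combining the claim with \cite[Theorem 3.5]{LTWW} for $R$ and translating the conditions back to $T$ yields both parts.

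The main point needing care is confirming the exact form of \cite[Theorem 3.5]{LTWW}. If it is stated in terms of images of disjointly supported vectors having disjoint supports, we need to check that this matches the row condition. For an operator on $\ell_1$ with columns $Re_j$, disjointness of supports of $\{Re_j\}$ is exactly the condition that each row has at most one nonzero entry. By linearity and continuity, this is equivalent to $R$ mapping disjointly supported vectors to disjointly supported vectors.

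In part (2), we must also interpret ``either preserves TEA pairs or rank one'' consistently. The zero map satisfies the row condition, so rank zero is covered by TEA preservation.

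If the cited result is only available for operators on $L_1(\mu)$ with a different normalization, a fallback is a direct proof, sketched below.

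For part (1), necessity: suppose $a_{ij}\neq0\neq a_{ik}$ with $j\neq k$. Then $(e_j,-\mu e_k)$ is a TEA pair for every unimodular $\mu$, since the supports are disjoint. Choosing $\mu$ so that $a_{ij}$ and $-\mu a_{ik}$ point in opposite directions gives a strict triangle inequality in coordinate $i$, so the image pair is not TEA. Sufficiency is immediate from disjoint supports of the columns.

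For part (2), necessity would mimic the argument in Theorem \ref{th-inf}(1), which compares ratios $|a_k b_j|=|a_j b_k|$ coordinatewise.
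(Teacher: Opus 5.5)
Your proposal is correct and follows the paper's route: the paper omits the proof, saying only that one proceeds as in Theorem~\ref{th-li} (extend $T$ by zero columns to an operator on $\ell_1$, which preserves parallel/TEA pairs if and only if $T$ does, by the coordinatewise criterion) and then applies \cite[Theorem 3.5]{LTWW}, which is your main argument. Your direct fallback for part (1) is also sound; the fallback for the necessity in part (2) is only a pointer, not a proof, but it is not needed once the cited result is used.
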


 Theorems \ref{th-li} and \ref{th-linf} directly yield the following characterization  of parallel and TEA pair preserves in $\mathcal{L}(\ell_\infty^2,\ell_1^m),$ where the scalar field is real. 
\begin{cor}\label{cor-n2}
	Suppose $T\in\mathcal{L}(\ell_\infty^2,\mathcal{Y}),$ where $\mathcal{Y}\in \{\ell_1,\ell_1^m:m\geq 2\}$, and $\mathbb{F}=\mathbb{R}.$ Let the $i$-th coordinate of $Te_j$ be $a_{ij}$ for  $j=1,2.$ Then 
	\begin{enumerate}
		\item $T$ preserves TEA pairs if and only if $a_{i2}\in \{a_{i1},-a_{i1}\}$ for all $i\geq 1.$
		\item 	$T$ preserves parallel pairs if and only if $a_{i2}\in \{a_{i1},-a_{i1}\}$ for all $i\geq 1$ or $rank(T)=1$.
	\end{enumerate}	
\end{cor}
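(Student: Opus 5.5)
The idea is to transfer the problem to $\ell_1^2$ through the real isometric isomorphism $S:\ell_1^2\to\ell_\infty^2$, $S(u_1,u_2)=(u_1+u_2,\,u_1-u_2)$. Since $\mathbb{F}=\mathbb{R}$, we have $\|S u\|_\infty=\max(|u_1+u_2|,|u_1-u_2|)=|u_1|+|u_2|$. So $S$ is a surjective linear isometry, and $S$ and $S^{-1}$ both preserve parallel pairs and TEA pairs. Consequently $T$ preserves parallel (resp. TEA) pairs if and only if $R:=TS\in\mathcal{L}(\ell_1^2,\mathcal{Y})$ does. The images of the basis vectors are
\[
Re_1=T(1,1)=Te_1+Te_2,\qquad Re_2=T(1,-1)=Te_1-Te_2 .
\]
Their $i$-th coordinates are therefore $a_{i1}+a_{i2}$ and $a_{i1}-a_{i2}$.

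Next we apply Theorem \ref{th-li} (when $\mathcal{Y}=\ell_1^m$) or Theorem \ref{th-linf} (when $\mathcal{Y}=\ell_1$) to $R$, with $n=2$. Part (1) of those theorems says that $R$ preserves TEA pairs if and only if, for each $i$, at most one of $a_{i1}+a_{i2}$ and $a_{i1}-a_{i2}$ is nonzero. In other words, for each $i$ we need $(a_{i1}+a_{i2})(a_{i1}-a_{i2})=0$, which is exactly $a_{i2}=\pm a_{i1}$. This proves (1).

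For (2), the same theorems say that $R$ preserves parallel pairs if and only if either $R$ preserves TEA pairs or $\operatorname{rank}(R)=1$. Since $S$ is invertible, $\operatorname{rank}(R)=\operatorname{rank}(T)$, and (1) already translates the TEA condition. This gives (2).

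There is no real obstacle. The only points to check are that $S$ is an isometry, which holds because $\max(|a+b|,|a-b|)=|a|+|b|$ over the reals, and that composing with an onto isometry preserves both notions in both directions. The latter is immediate because both notions are defined purely in terms of norms of linear combinations.
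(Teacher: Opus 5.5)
Your proof is correct and is the same argument as the paper's. The paper likewise composes $T$ with the real isometry $S$ given by $Se_1=(1,1)$, $Se_2=(1,-1)$ and then invokes Theorems \ref{th-li} and \ref{th-linf}. You additionally spell out two steps the paper leaves implicit: the row condition $(a_{i1}+a_{i2})(a_{i1}-a_{i2})=0 \iff a_{i2}=\pm a_{i1}$, and the equality $rank(TS)=rank(T)$.
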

\begin{proof}
The linear map $S:\ell_1^2\to \ell_\infty^2$ defined by 
$Se_1=(1,1), Se_2=(1,-1)$ is an isometry. Note that $T$ preserves parallel (resp. TEA) pairs if and only if $TS:\ell_1^2\to\mathcal{Y}$ preserves  parallel (resp. TEA) pairs. 
Consequently, the conclusion follows immediately from Theorems \ref{th-li} and  \ref{th-linf}. 
\end{proof}

Our final goal in this section is to characterize the TEA pair preservers for the remaining cases, that is, assuming either $\mathcal{X}\neq \ell_\infty^2$ or $\mathbb{F}\neq \mathbb{R}.$
\begin{theorem}\label{th-nteag}
	Suppose $T\in\mathcal{L}(\mathcal{X}, \mathcal{Y}),$ where either of the following holds:
	\begin{itemize}
		\item $\mathcal{X}\in \{\ell_\infty^n,\ell_\infty,c_0:n\geq 3\}.$ 
		\item $\mathbb{F}=\mathbb{C},$ $\mathcal{X}=\ell_\infty^2.$
	\end{itemize}
	Let $\ker(T)\neq \{0\}.$ Then  $T$ preserves TEA pairs if and only $T=0.$
\end{theorem}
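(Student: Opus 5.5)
The ``if'' direction is trivial, since the zero map sends every pair to $(0,0)$, which is a TEA pair. For the converse, assume $T$ preserves TEA pairs and $\ker(T)\neq\{0\}$. A TEA pair preserver also preserves parallel pairs, so Theorem \ref{th-ncom} gives $rank(T)\leq 1$. Hence $T(\cdot)=f(\cdot)y$ for some $f\in\mathcal{X}^*$ and $y\in\mathcal{Y}$. If $y=0$ or $f=0$ we are done, so assume both are non-zero and aim for a contradiction.

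For a rank one map, $(Tu,Tv)=(f(u)y,f(v)y)$ is a TEA pair exactly when $|f(u)+f(v)|=|f(u)|+|f(v)|$, that is, when $f(u)\overline{f(v)}\geq 0$. So the task is to exhibit a TEA pair $(u,v)$ in $\mathcal{X}$ with $f(u)\overline{f(v)}\notin[0,\infty)$. The key source of TEA pairs is that $u$ and $v$ both attain their norm at a common coordinate $k$ with $\overline{u_k}v_k\geq 0$. The idea is to fix one coordinate as a ``norm anchor'' and let the remaining coordinates vary freely with small modulus.

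\textbf{Case $\mathcal{X}\in\{\ell_\infty^n,\ell_\infty,c_0:n\geq 3\}$.} First check whether $f(e_i)=0$ for every $i$. If so, then for $c_0$ or $\ell_\infty^n$ we get $f=0$, since $f$ is determined by its values on the $e_i$ and we assumed $f\neq 0$. For $\ell_\infty$, $f$ may be a singular functional, and this sub-case needs extra care; it is addressed below. Otherwise pick $j$ with $f(e_j)\neq 0$, and choose an anchor coordinate $k\neq j$. Consider
\[u=e_k+\epsilon e_j,\qquad v=e_k+\epsilon\lambda e_j,\]
with $0<\epsilon<1$ and $|\lambda|=1$. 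Both vectors attain their norm, equal to $1$, at coordinate $k$, and $\overline{u_k}v_k=1$, so $(u,v)$ is a TEA pair. Writing $a=f(e_k)$ and $b=f(e_j)\neq 0$, we need some $\epsilon,\lambda$ with $(a+\epsilon b)\overline{(a+\epsilon\lambda b)}\notin[0,\infty)$. Since $n\geq 3$, a third coordinate $l$ is available, and we may also add $\pm\epsilon e_l$ to gain freedom. If $a=0$, take $\lambda=-1$: the product is $-\epsilon^2|b|^2<0$. If $a\neq 0$, choose $\lambda$ so that $\lambda b$ is a non-real multiple of $a$; then for small $\epsilon$ the argument of $a+\epsilon\lambda b$ differs from that of $a+\epsilon b$, so the product is not in $[0,\infty)$. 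This argument in fact works even over $\mathbb{R}$, except that $\lambda$ must then be $\pm1$. In that setting, $a+\epsilon b$ and $a-\epsilon b$ have the same sign for small $\epsilon$, so we instead take $\epsilon$ large but at most $1$. We would use the third coordinate as a fresh anchor $l$ with $f(e_l)$ suitably controlled, and exploit the ambient $\ker(T)$ vector as in Lemma \ref{lem-r1}. We expect this real case to be the main obstacle.

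\textbf{Handling the real case and singular functionals.} We would redo the construction with anchor $e_k$ and
\[u=e_k+e_j,\qquad v=e_k-e_j+s e_l,\qquad |s|\leq 1.\]
Both vectors attain norm $1$ at coordinate $k$ with positive product there, so $(u,v)$ is a TEA pair. This yields $f(u)f(v)\geq 0$ for every such choice. Permuting the roles of $j,k,l$, together with the signs $\pm e_k$, gives a system of sign inequalities on $f(e_j),f(e_k),f(e_l)$. Combining these should force all three values to be $0$. Applying this to every triple of coordinates gives $f(e_i)=0$ for all $i$.

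For $\ell_\infty$, and for the residual sub-case where $f$ vanishes on all $e_i$, we use the following observation. If $f(e_i)=0$ for all $i$, take any $w$ with $f(w)\neq 0$. Then $u=e_1+w/(2\|w\|)$ and $v=e_1-w/(2\|w\|)$ both attain norm at coordinate $1$, so $(u,v)$ is a TEA pair, while $f(u)f(v)=-|f(w)|^2/(4\|w\|^2)<0$. This gives the required contradiction.

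\textbf{Case $\mathbb{F}=\mathbb{C}$, $\mathcal{X}=\ell_\infty^2$.} Here the complex rotation argument applies directly. If $f=(a,b)$ with $b\neq 0$, take $u=e_1+\epsilon e_2$ and $v=e_1+\epsilon\lambda e_2$. Choosing $\lambda$ so that $\lambda b$ is a non-real multiple of $a$ (or $\lambda=-1$ if $a=0$) produces a TEA pair with $f(u)\overline{f(v)}\notin[0,\infty)$. If $b=0$, the same construction works with the roles of the two coordinates swapped. In every case this contradicts the TEA preservation, so $T=0$.
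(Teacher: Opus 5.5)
Your proof is correct in substance. After the common first step (Theorem \ref{th-ncom} gives $T=f(\cdot)y$), it takes a different route from the paper.

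The paper anchors its test pairs on a nonzero vector $x\in\ker f$. For $n\geq 3$ it takes $x\in\ker f\cap\mathrm{span}\{e_1,e_2\}$ and kills $f$ on $W=\{w:w_1=w_2=0\}$ with the TEA pairs $(\zeta x+w,\zeta x-w)$. It then kills $f(e_1),f(e_2)$ with $(\eta e_3+e_i,\eta e_3-e_i)$. That argument is field-independent and covers all of $\ell_\infty$ at once via $\mathcal{X}=\mathrm{span}\{e_1,e_2\}\oplus W$, so singular functionals never appear. Complex $\ell_\infty^2$ is a separate computation ending in $|2-i|=|1-i|+1$.

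You instead rewrite TEA preservation as $f(u)\overline{f(v)}\geq 0$ on TEA pairs and test it on pairs anchored at a unit vector $e_k$. You use $\lambda=-1$ when $f(e_k)=0$, a small rotation over $\mathbb{C}$ (which treats $\ell_\infty^2$ and $n\geq 3$ uniformly), and a three-coordinate sign system over $\mathbb{R}$. This costs a real/complex split within $n\geq 3$ and an extra step for functionals on $\ell_\infty$ vanishing at every $e_i$. In return the complex case is uniform, and it becomes transparent why real $\ell_\infty^2$ is excluded: with only two coordinates your sign system yields just $|f(e_1)|=|f(e_2)|$, which is exactly the exceptional family of Theorem \ref{th-n2p}.

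Three details need repair.

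\emph{Rotation step.} The condition ``$\lambda b$ a non-real multiple of $a$'' allows $\lambda=1$ when $b/a\notin\mathbb{R}$, and then the product is $|a+\epsilon b|^2\geq 0$. Take instead $\lambda b/a=\pm i|b/a|$, with the sign chosen so that $\Im(\lambda b/a)\Im(b/a)\leq 0$, and $\epsilon|b/a|<1$. Then $1+\epsilon b/a$ and $1+\epsilon\lambda b/a$ lie in the open right half-plane with different arguments.

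\emph{Singular-functional step over $\mathbb{C}$.} A common norm-attaining coordinate does not suffice; you also need $\overline{u_1}v_1\geq 0$. For $w=(i,1,1,\ldots)$ one gets $\|u+v\|=2<\sqrt{5}=\|u\|+\|v\|$. Since $f(e_1)=0$ in that sub-case, replace $w$ by $w-w_1e_1$, so that $u_1=v_1=1$. Also write $f(u)\overline{f(v)}$ rather than $f(u)f(v)$.

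\emph{Real sign system.} You only assert that it forces vanishing; it does, but the check should be written out. Consider $u=\sigma e_p+e_q$ and $v=\sigma e_p-e_q+se_r$ with $\{p,q,r\}=\{j,k,l\}$, $\sigma=\pm1$ and $|s|\leq1$. Putting $s=0$ and using all orderings gives $f(e_p)^2=f(e_q)^2=:t^2$ for all $p,q\in\{j,k,l\}$. If $t>0$, choose $\sigma$ with $\sigma f(e_p)=f(e_q)$. Then $f(u)f(v)=2f(e_q)f(e_r)s\geq 0$ for all $s\in[-1,1]$, hence $t=0$.

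Finally, the exploratory remarks about taking $\epsilon$ ``large but at most $1$'' and using $\ker(T)$ ``as in Lemma \ref{lem-r1}'' play no role and should be dropped.
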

\begin{proof}
	Suppose $T$ preserves TEA pairs. Then $T$ preserves parallel pairs. Since $\ker(T)\neq \{0\},$ from Theorem \ref{th-ncom}  it follows that $rank(T)\leq1.$  So there exist $f\in \mathcal{X}^*,$ and $z\in \mathcal{Y}$ such that $T(x)=f(x)z$ for all $x\in \mathcal{X}.$\\
	
	\noindent First assume  $\mathcal{X}\in \{\ell_\infty^n,\ell_\infty,c_0:n\geq 3\}.$ Then $\ker(f)\cap V\neq \{0\},$ where $V=span\{e_1,e_2\}.$ Choose a non-zero vector $x\in \ker(f)\cap V.$ Then $Tx=0.$ Suppose 
	$$W=\{w\in \mathcal{X}:i\text{-th coordinate of }w \text{  is }0, i=1,2 \}.$$ 
	For $w\in W$ and $\zeta>\frac{\|w\|}{\|x\|},$  $(\zeta x+w,\zeta x-w)$ is a TEA pair in $\mathcal{X},$ and so $(Tw,-Tw)$ is a TEA pair in $\mathcal{Y},$ that is,
	\[0=\|Tw-Tw\|=\|Tw\|+\|-Tw\|\Rightarrow Tw=0.\] Let $i\in\{1,2\}.$
	Then for $\eta>1,$ since $(\eta e_3+e_i,\eta e_3-e_i)$ is a TEA pair in $\mathcal{X}$ and $e_3\in W,$ so $(Te_i,-Te_i)$ is a TEA pair in $\mathcal{Y}.$ Proceeding similarly, we get $Te_i=0.$ Thus from $\mathcal{X}=V\oplus W,$ it follows that $T$ must be the zero operator.\\
	
	\noindent Now consider $\mathcal{X}=\ell_\infty^2,$ and $\mathbb{F}=\mathbb{C}.$  Choose a non-zero vector $x=(x_1,x_2)\in \ker(f).$ If $x_1=0,$ then $e_2\in \ker(f)=\ker(T).$ Then as previous, we get $Te_1=0,$ and so $T=0.$ Similarly, if $x_2=0,$ then $T=0.$ Thus it only remains to prove that $x_j=0$ for some $j\in \{1,2\}.$  Suppose for a contradiction that $x_j=|x_j|e^{i\theta_j}\neq 0,$ for $j=1,2.$ Without loss of generality, assume $|x_1|\geq |x_2|.$ Choose $y=(1,e^{i\theta}),$ where $\theta=\theta_2-\theta_1,$ and $\xi =|\xi|e^{-i\theta_1},$ where $|\xi|>\frac{1}{|x_2|}.$ Then $$\xi x+y=\Big(|\xi x_1|+1, (|\xi x_2|+1)e^{i\theta}\Big),\text{ and  }\xi x-y=\Big(|\xi x_1|-1, (|\xi x_2|-1)e^{i\theta}\Big),$$
	which implies that $\Big(\xi x+y,\xi x-y\Big)$ is a TEA pair in $\ell_\infty^2,$ and so $(Ty,-Ty)$ is a TEA pair in $\mathcal{Y},$ proving that $Ty=0,$ that is, $Te_1+e^{i\theta}Te_2=0.$ Moreover, $\Big((1,ie^{i\theta}),(1,0)\Big)$ is a TEA pair in $\ell_\infty^2,$ and so $\Big(T(1,ie^{i\theta}),T(1,0)\Big)$ is a TEA pair in $\mathcal{Y}.$ Therefore,
	\begin{eqnarray*}
		\|T(1,ie^{i\theta})+T(1,0)\|&=&\|T(1,ie^{i\theta})\|+\|T(1,0)\|\\
		\Rightarrow \|2Te_1+ie^{i\theta}Te_2\|&=&\|Te_1+ie^{i\theta}Te_2\|+\|Te_1\|\\
		\Rightarrow \|2Te_1-iTe_1\|&=&\|Te_1-iTe_1\|+\|Te_1\|, ~(\text{since } Te_1+e^{i\theta}Te_2=0) \\
		\Rightarrow |2-i|&=&|1-i|+1,
	\end{eqnarray*}
	which is clearly a contradiction.
\end{proof}

In the following corollary, we observe that in certain spaces there are no non-zero TEA pair preservers. 
\begin{cor}\label{cor-ntea}
	Suppose $\mathcal{Z}$ is a strictly convex Banach space, $\mathcal{Y}\in \{\ell_1^m,\ell_1, \mathcal{Z}:m\geq 2\},$ and either of the following holds:
	\begin{itemize}
		\item $\mathcal{X}\in \{\ell_\infty^n,\ell_\infty:n\geq 3\}.$ 
		\item $\mathbb{F}=\mathbb{C},$ $\mathcal{X}=\ell_\infty^2.$
	\end{itemize}
	Then  $T\in\mathcal{L}(\mathcal{X}, \mathcal{Y})$ preserves TEA pairs if and only $T=0.$
\end{cor}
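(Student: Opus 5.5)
The plan is to reduce everything to Theorem \ref{th-nteag}: show that a TEA pair preserver $T$ in each listed setting has $\ker(T)\neq\{0\}$, so that theorem forces $T=0$. The converse is trivial, since the zero map sends every pair to $(0,0)$, which is a TEA pair.

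\emph{Case $\mathcal{Y}\in\{\ell_1,\ell_1^m\}$.} Assume $T$ preserves TEA pairs. Then it preserves parallel pairs. If $\mathcal{X}\in\{\ell_\infty^n,\ell_\infty:n\geq 3\}$, Theorem \ref{th-inf}(2) gives $rank(T)\leq 1$. If $\mathbb{F}=\mathbb{C}$ and $\mathcal{X}=\ell_\infty^2$, Theorem \ref{th-ncomp} gives the same. Since $\dim\mathcal{X}\geq 2$ in all cases, a map of rank at most one has a nontrivial kernel, and Theorem \ref{th-nteag} yields $T=0$.

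\emph{Case $\mathcal{Y}=\mathcal{Z}$ strictly convex.} In a strictly convex space, parallel pairs are exactly the linearly dependent pairs. So a parallel pair preserver has any two vectors of $range(T)$ linearly dependent, which forces $rank(T)\leq 1$. As before, $\ker(T)\neq\{0\}$ because $\dim\mathcal{X}\geq 2$, and Theorem \ref{th-nteag} applies. For $\mathcal{X}=\ell_\infty$ this covers the infinite-dimensional domain: $T=f(\cdot)z$ with $f\in\ell_\infty^*$, whose kernel is nontrivial.

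There is no serious obstacle. The one point to check is that Theorem \ref{th-nteag} is stated for exactly these domains, namely $\ell_\infty^n$, $\ell_\infty$ with $n\geq 3$, and the complex $\ell_\infty^2$. The only substantive inputs are the rank-one conclusions, which come from earlier theorems for $\ell_1$-type ranges and from strict convexity otherwise.
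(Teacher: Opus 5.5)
Your route is the paper's: obtain $rank(T)\leq 1$ from Theorem \ref{th-inf}(2), Theorem \ref{th-ncomp}, or strict convexity. Then deduce $\ker(T)\neq\{0\}$ from $\dim\mathcal{X}\geq 2$, and finish with Theorem \ref{th-nteag}. The $\ell_1$-type cases are correct as you wrote them.

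The strictly convex case contains a non sequitur. Strict convexity only gives that $\{Tx,Ty\}$ is linearly dependent when $(x,y)$ is a parallel pair in $\mathcal{X}$. That says nothing about two arbitrary vectors of $range(T)$, because not every pair in $\ell_\infty^n$ or $\ell_\infty$ is parallel; for example, $\|e_1+\lambda e_2\|=1<2$ for every $\lambda\in\mathbb{T}$. The general principle you invoke is in fact false: the identity map of a strictly convex space of dimension at least $2$ preserves parallel pairs but has rank at least $2$.

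What makes the case work is a property of the domain, which the paper records just before Theorem \ref{th-inf}. If $x$ has all coordinates unimodular, i.e.\ $x\in E_{\mathcal{X}}$, then $(x,z)$ is a parallel pair for every $z\in\mathcal{X}$. For $\ell_\infty$ this uses a subsequence argument when $\|z\|$ is not attained. Consequently $\{Tx,Tz\}$ is linearly dependent for every $z$:
\begin{itemize}
\item if $Tx\neq 0$, then $range(T)\subseteq span\{Tx\}$;
\item if $Tx=0$, then $0\neq x\in\ker(T)$ directly.
\end{itemize}
Either way $\ker(T)\neq\{0\}$, and Theorem \ref{th-nteag} applies.

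The paper's own proof only asserts $rank(T)\leq 1$ in this case, so this extreme-point step is the detail both arguments need spelled out.
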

\begin{proof}
	Suppose $T$ preserves TEA pairs. Then $T$ preserves parallel pairs. If $\mathcal{Y}$ is strictly convex, then $rank(T)\leq1.$ If $\mathcal{Y}\in \{\ell_1^m,\ell_1:m\geq 2\},$ then from Theorems \ref{th-inf} and \ref{th-ncomp} it follows that $rank(T)\leq1.$  In each case $\ker(T)\neq \{0\}.$ Thus the result follows from Theorem \ref{th-nteag}.
\end{proof}

Finally we end this section with the following characterization of TEA pair preserves in $ \mathcal{L}(\ell_\infty^2,\mathcal{Y}),$ where $\mathcal{Y}$ is strictly convex, and the scalars are real.
\begin{theorem}\label{th-n2p}
	Suppose $T\in \mathcal{L}(\ell_\infty^2,\mathcal{Y}),$  where $\mathcal{Y}$ is strictly convex, 
	and $\mathbb{F}=\mathbb{R}.$ Then $T$ preserves TEA pairs if and only if there exists $z\in \mathcal{Y}$ such that $Tx=f(x)z$ for all $x\in \ell_\infty^2,$ where $f\in \{(1,1),(1,-1)\}\subset \ell_1^2.$ 
\end{theorem}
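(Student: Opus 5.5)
The plan is to first reduce to rank at most one using strict convexity of $\mathcal{Y}$, and then pin down the functional by testing explicit TEA pairs in $\ell_\infty^2$.

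\emph{Rank reduction.} Suppose $T$ preserves TEA pairs; then it preserves parallel pairs. Since $\mathcal{Y}$ is strictly convex, $(Tx,Ty)$ parallel means $\{Tx,Ty\}$ is linearly dependent. By the observation preceding Theorem \ref{th-inf}, each extreme point $x\in E_{\ell_\infty^2}$ forms a parallel pair with every $y\in\ell_\infty^2$. Hence $T(1,1)$ and $T(1,-1)$ are each linearly dependent with every vector in $range(T)$, and in particular with each other. Since $(1,1),(1,-1)$ span $\ell_\infty^2$, this forces $rank(T)\leq 1$. So we may write $Tx=g(x)z$ with $g=(a,b)\in\ell_1^2$ and $z\in\mathcal{Y}$. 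If $T=0$ we take $z=0$ and are done, so assume $z\neq 0$ and $g\neq 0$.

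\emph{Identifying $g$.} For a TEA pair $(x,y)$ in $\ell_\infty^2$ we need $|g(x)+g(y)|\,\|z\|=(|g(x)|+|g(y)|)\|z\|$. Since the scalars are real and $z\neq 0$, this is equivalent to $g(x)g(y)\geq 0$. I will test this on two families.
\begin{itemize}
\item For $t,s\in[-1,1]$, the vectors $(1,t)$ and $(1,s)$ both attain their norm $1$ at the first coordinate with positive sign, so they form a TEA pair. This gives $(a+bt)(a+bs)\geq 0$ for all $t,s\in[-1,1]$. Hence the affine function $t\mapsto a+bt$ does not change sign on $[-1,1]$; comparing $t=1$ and $t=-1$ gives $a^2-b^2\geq 0$, i.e.\ $|a|\geq|b|$.
\item Symmetrically, $(t,1)$ and $(s,1)$ form a TEA pair, which gives $|b|\geq|a|$.
\end{itemize}
Therefore $|a|=|b|\neq 0$, so $g=a(1,\pm1)$. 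Absorbing $a$ into $z$ yields $Tx=f(x)z$ with $f\in\{(1,1),(1,-1)\}$.

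\emph{Sufficiency.} Let $f=(1,\epsilon)$ with $\epsilon=\pm1$, and let $(x,y)$ be a TEA pair in $\ell_\infty^2$. Then both $x$ and $y$ attain their norm at a common coordinate $k$ with $x_ky_k\geq 0$. Because $|x_k|\geq|x_{j}|$ for the other coordinate $j$, the number $f(x)=x_1+\epsilon x_2$ is either zero or has the sign of $x_k$ (up to the fixed sign $\epsilon$ when $k=2$). The same holds for $f(y)$, so $f(x)f(y)\geq 0$. It follows that $\|f(x)z+f(y)z\|=\|f(x)z\|+\|f(y)z\|$, i.e.\ $(Tx,Ty)$ is a TEA pair.

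\emph{Main obstacle.} The only delicate step is the rank reduction, which must use extreme points correctly. All the remaining steps are elementary sign computations.
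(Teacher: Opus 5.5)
Your proof is correct and takes essentially the same route as the paper: strict convexity reduces to $rank(T)\leq 1$, testing explicit TEA pairs forces $|a|=|b|$, and sufficiency follows from the same sign argument. The only cosmetic difference is that the paper uses the single TEA pair $((1,1),(1,-1))$ after normalizing $|a|\leq|b|$. Your two families, specialized to $t=1,s=-1$, give both $|a|\geq|b|$ and $|b|\geq|a|$ directly, so you avoid that normalization.
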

\begin{proof}
For the sufficient part, assume 	$Tx=f(x)z$ for all $x\in \ell_\infty^2,$ where $f=(1,1).$ Let $\Big((x_1,x_2),(y_1,y_2\Big)$ be a TEA pair in $\ell_\infty^2.$ 
Suppose $$\|(x_1,x_2)\|=|x_1|\geq |x_2|,~\|(y_1,y_2)\|=|y_1|\geq |y_2|, \text{ and }x_1y_1\geq 0.$$ So 
\begin{eqnarray*}
\|T(x_1,x_2)+T(y_1,y_2)\|
&=&	|(x_1+x_2)+(y_1+y_2)|\|z\|\\
	&=&\|T(x_1,x_2)\|+\|T(y_1,y_2)\|,
\end{eqnarray*}
which implies that $T$ preserves TEA pairs. Similarly if $f=(1,-1),$ then  $T$ preserves TEA pairs.\\
For the necessary part, assume $T$ preserves TEA pairs. Then $T$ also preserves parallel pairs, and so $rank(T)\leq 1.$ If $T=0,$ then we are done.  Assume  $T\neq 0,$ and $T(\cdot)=f(\cdot)z$ for some non-zero $f=(a,b)\in \ell_1^2,$ and $(0\neq)~z\in \mathcal{Y}.$ Without loss of generality, assume $\frac{|a|}{|b|}\leq 1.$ Since $\Big((1,1),(1,-1)\Big)$ is a TEA pair in $\ell_\infty^2,$ so is $\Big(T(1,1),T(1,-1)\Big)$ in $\mathcal{Y}.$ Therefore,
\[\Big(|a+b|+|a-b|\Big)\|z\|=\|T(1,1)\|+\|T(1,-1)\|=\|T(1,1)+T(1,-1)\|=|2a|\|z\|,\]
which implies that $|a|=|b|.$ Thus either $f=a(1,1)$ or $f=a(1,-1),$ and so $Tx=h(x)az,$ where $h\in \{(1,1),(1,-1)\},$ and $az\in \mathcal{Y}.$  
\end{proof}

\section{Parallel and TEA pair preservers in $\mathcal{L}(\ell_1(J),\mathcal{Y})$} \label{sec-l1}

The primary goal of this section is to explore the parallel and TEA pair preservers in $\mathcal{L}(\ell_1(J),\mathcal{Y}).$ 
Let us begin with a necessary condition for parallel pair preserving maps.

\begin{theorem}\label{th-rank}
	Suppose $(0\neq)~T\in \mathcal{L}(\ell_1(J),\mathcal{Y})$  preserves parallel pairs.  If $\ker(T)\neq \{0\},$ then  either $rank(T)=1$ or $Te_i=0$ for some $i\in J.$	
\end{theorem}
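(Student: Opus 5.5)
The plan is to assume $Te_i\neq 0$ for every $i\in J$ and deduce $rank(T)=1$. I would show that any two elements of $range(T)$ form a parallel pair in $\mathcal{Y}$, and then apply Lemma \ref{lem-pdim}. That lemma needs a smooth vector, which I would get as follows. The range is the image of the separable space $\ell_1(J)$, so it is separable. Parallelism of pairs is a closed condition, since it amounts to $\|x+\lambda y\|=\|x\|+\|y\|$ for some $\lambda$ in the compact set $\mathbb{T}$. Hence the closure $\overline{range(T)}$ is a separable Banach space in which all pairs are parallel. By Mazur's theorem it contains a smooth vector, so Lemma \ref{lem-pdim} gives $\dim\le 1$. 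Since $T\ne 0$, this means $rank(T)=1$.

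Fix $0\neq z\in\ker(T)$ with support $S=\{k:z_k\neq 0\}$. If $|S|=1$ then $Te_k=0$ for some $k$, which is excluded. If $|S|=2$, say $S=\{i,j\}$, then $Tz=0$ gives $\{Te_i,Te_j\}$ linearly dependent. I will then reduce to showing that $\{Te_j,Te_k\}$ is linearly dependent for every pair $j,k\in J$, since then all $Te_k$ lie on one line and $rank(T)=1$. For a fixed pair, by homogeneity and Lemma \ref{lem-pdim} applied to $T(span\{e_j,e_k\})$, it suffices to show $(Tx,Ty)$ is parallel for $x,y\in span\{e_j,e_k\}$. I will use the $\ell_1$ criterion: $(u,v)$ is parallel if and only if $\lambda\overline{u_m}v_m\ge 0$ for all $m$, for some $\lambda\in\mathbb{T}$. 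Two consequences will be used repeatedly: every vector with one-point support is parallel to everything, and vectors with disjoint supports are parallel. The working tool is that $Tx=T(x+sz)$ and $Ty=T(y+tz)$ for all scalars $s,t$, so I only need to choose $s,t$ making $(x+sz,y+tz)$ parallel in $\ell_1(J)$.

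The cases, modelled on Lemma \ref{lem-nind}, are as follows. (a) If $j\in S$ and $k\notin S$, choose $s=-x_j/z_j$ and $t=-y_j/z_j$. This kills the $j$-th coordinates, so both vectors live on $\{k\}\cup (S\setminus\{j\})$. There they equal $x_ke_k+s z'$ and $y_ke_k+tz'$ with $z'=z-z_je_j$, and the pair is parallel exactly when $\overline{x_k}y_k$ and $\bar s t$ have a common phase (or one of them vanishes). If this fails, I would instead kill the $j$-th coordinate of only one of the vectors and take the other multiplier huge, so that the phase pattern on $S$ is essentially that of $z$. I would then tune $s$ and $t$ so that the phases match exactly, which is the continuity/argument adjustment used in Case b2 of Lemma \ref{lem-nind}. (b) If $j,k\notin S$, I can choose $s,t$ with $\overline{s}t>0$; the coordinates in $S$ are then compatible, and only the two coordinates $j,k$ need matching. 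Here I would first use (a) to know $Te_j\parallel$ everything in $T(span\{e_i\}\cup\{e_k\})$ for some $i\in S$, and transfer dependence through $Te_i$. (c) If $j,k\in S$, the difficulty is that $z$ itself has fixed phases at $j$ and $k$.

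The main obstacle is producing exact phase agreement in cases (a) and (c). Adding multiples of a single kernel vector lets me cancel only one coordinate per vector. So I must exploit a one-parameter family of choices of $(s,t)$, together with closedness of parallelism. The continuity argument should run over the argument of $\alpha$ in $y=\alpha e_j+e_k$, as in Lemma \ref{lem-nind}, to cover the remaining phases. I would first try to settle case (a) cleanly, then derive (b) and (c) from it by transitivity of linear dependence through a fixed $Te_i$, $i\in S$, which uses $Te_i\ne 0$.
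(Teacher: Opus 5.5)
\textbf{There is a genuine gap.} Your framework matches the paper's: translate by a kernel vector $z$, show $(Tx,Ty)$ is parallel for $x,y\in span\{e_j,e_k\}$, then apply Lemma \ref{lem-pdim}. However, the step that actually produces a parallel pair in $\ell_1(J)$ is missing, and the mechanisms you do propose fail.

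In case (a), take your first choice $s=-x_j/z_j$, $t=-y_j/z_j$. Then $\bar s t=\overline{x_j}y_j/|z_j|^2$. So your criterion, that $\overline{x_k}y_k$ and $\bar s t$ share a phase, is exactly the condition that $(x,y)$ is already parallel in $\ell_1(J)$. This choice only recovers the case with nothing to prove. Your fallback (a huge multiplier, phase tuning, a continuity argument as in Case b2 of Lemma \ref{lem-nind}) is never specified.

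Case (c) has a further gap. Transitivity through some $Te_i$ needs an index $l\notin S$ to which both $Te_j$ and $Te_k$ are linked by (a). When $S=J$ no such index exists, for example when $J$ is finite and $\ker(T)$ is spanned by a vector of full support. So (c) needs a direct argument, and you give none.

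\textbf{The missing idea.} In $\ell_1(J)$, two vectors whose supports meet in at most one index $k$ are always parallel: pick $\lambda\in\mathbb{T}$ with $\lambda\overline{u_k}v_k\ge 0$. This generalizes the two facts you list. So translate only one of the two vectors:
\begin{itemize}
\item Fix $j\in S$, let $k\neq j$ be arbitrary, and take $s=-x_j/z_j$ and $t=0$.
\item Then $x+sz$ vanishes at $j$, while $y$ is supported in $\{j,k\}$. Their supports meet at most in $\{k\}$, so $(Tx,Ty)=(T(x+sz),Ty)$ is parallel.
\item This holds whether or not $k\in S$. Lemma \ref{lem-pdim} then gives $Te_k\in span\{Te_j\}$ for every $k$, using $Te_j\neq 0$.
\item Hence $rank(T)=1$, using boundedness of $T$ when $J$ is infinite. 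No case split, continuity argument or transitivity is needed.
\end{itemize}

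\textbf{Comparison with the paper.} The paper uses the same mechanism less directly. In Step 1 it rescales one span vector and adds a multiple of the kernel vector so that the result agrees with the other span vector on two coordinates of the support, by solving a $2\times 2$ system. The two vectors then differ only by a disjointly supported piece. This yields a dependent pair, and hence a kernel vector supported on two indices. Step 2 uses that two-point-support kernel vector to handle every remaining index, which is how the paper avoids the $S=J$ obstruction.
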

\begin{proof}
	 Assume $Te_i\neq 0$ for all $i\in J.$ Choose a non-zero vector  $x\in \ker(T).$ Suppose $x=\sum_{i\in J}x_ie_i.$\\
	\textbf{Step 1:} $\{Te_k,Te_j\}$ is linearly dependent for some $k,j\in J.$\\
	This holds trivially if $x$ has at most two non-zero coordinates.  Assume $x$ has at least three non-zero coordinates.  Without loss of generality, suppose $x_i\neq 0$ for $i\in\{1,2,3\}\subseteq J.$ We  show that $Te_1,Te_2$ are linearly dependent. Let $y,z\in span\{e_1,e_2\}.$ If $(y,z)$ is a parallel pair in $\ell_1(J),$ then $(Ty,Tz)$ is a parallel pair in $\mathcal{Y}.$ Suppose $(y,z)$ is not a parallel pair in $\ell_1(J).$ Let $y=\alpha e_1+\beta e_2,~z=\gamma e_1+\delta e_2,$ for some scalars $\alpha,\beta,\gamma,\delta.$ Then $\alpha\beta\gamma\delta\neq 0.$ Since parallelism is homogeneous, $(\frac{\alpha}{\beta}e_1+e_2,\frac{\gamma}{\delta}e_1+e_2)$ is not a parallel pair in $\ell_1(J).$ Hence, either $\frac{\alpha}{\beta}\neq \frac{x_1}{x_2}$ or $\frac{\gamma}{\delta}\neq  \frac{x_1}{x_2}.$ Without loss of generality, assume that $\frac{\alpha}{\beta}\neq  \frac{x_1}{x_2}.$ Choose $\mu=\frac{\frac{x_1}{x_2}-\frac{\gamma}{\delta}}{\frac{x_1}{x_2}-\frac{\alpha}{\beta}},$ and $\lambda=\frac{1-\mu}{x_2}.$  Then 
	\[\mu(\frac{\alpha}{\beta}e_1+e_2)+\lambda x=\frac{\gamma}{\delta}e_1+e_2+u,\quad  \text{ where } u=\sum_{i\in J\setminus\{1,2\}}\lambda x_ie_i.\]
	Since $\Big(\frac{\gamma}{\delta}e_1+e_2+u,\frac{\gamma}{\delta}e_1+e_2\Big)$ is a parallel pair in $\ell_1(J),$ so is $\Big(\mu(\frac{\alpha}{\beta}e_1+e_2)+\lambda x,\frac{\gamma}{\delta}e_1+e_2\Big).$ Therefore, $\Big(\mu T(\frac{\alpha}{\beta}e_1+e_2)+\lambda Tx,T(\frac{\gamma}{\delta}e_1+e_2)\Big)$ is a parallel pair in $ \mathcal{Y}.$ Consequently, using $Tx=0,$ and homogeneity, we get  $\Big( T(\alpha e_1+\beta e_2),T(\gamma e_1+\delta e_2)\Big)=(Ty,Tz)$ is a parallel pair in $ \mathcal{Y}.$ Thus arbitrary two elements of $span\{Te_1,Te_2\}$ are parallel, which implies that $\dim(span\{Te_1,Te_2\})\leq 1.$\\
	\textbf{Step 2:} $rank(T)=1.$ \\
	By Step 1, without loss of generality, assume that $\{Te_1,Te_2\}$ is linearly dependent. Suppose  $aTe_1+bTe_2=0$ for some non-zero scalars $a,b.$ Let $w=ae_1+be_2.$
	Consider $i\in J\setminus\{1,2\}.$ Suppose $y,z\in span\{e_1,e_i\},$ and $y=\alpha e_1+\beta e_i, z=\gamma e_1+\delta e_i.$ If $(y,z)$ is a parallel pair in $\ell_1(J),$ then so  is $(Ty,Tz)$ in $\mathcal{Y}.$ Suppose $(y,z)$ is not a parallel pair in $\ell_1(J),$ that is, $\alpha\beta\gamma\delta\neq 0,$ and  $(\frac{1}{\beta}y,\frac{1}{\delta}z)$ is not a parallel pair in $\ell_1(J).$ So $\frac{\alpha}{\beta}\neq \frac{\gamma}{\delta}.$ For $\eta=\frac{\frac{\gamma}{\delta}-\frac{\alpha}{\beta}}{a},$ $$\frac{1}{\beta}y+\eta w=\frac{1}{\delta}z+\eta be_2.$$ Since $\Big(\frac{1}{\delta}z+\eta be_2, \frac{1}{\delta}z\Big)$ is a parallel pair in $\ell_1(J),$ so is $\Big(\frac{1}{\beta}y+\eta w,\frac{1}{\delta}z\Big).$ Thus $\Big(\frac{1}{\beta}Ty+\eta Tw,\frac{1}{\delta}Tz\Big)=\Big(\frac{1}{\beta}Ty,\frac{1}{\delta}Tz\Big)$ is a parallel pair in $\mathcal{Y},$ so is $(Ty,Tz).$ Since arbitrary two elements of $span\{Te_1,Te_i\}$ are parallel, $\dim(span\{Te_1,Te_i\})\leq 1.$ Thus $Te_i\in span\{Te_1\}$ for all $i\in J\setminus \{1\},$  and so $rank(T)=1.$ 
\end{proof}

The following corollary is an immediate consequence of the last theorem.
\begin{cor}\label{cor-fr}
Suppose $T\in \mathcal{L}(\ell_1(J),\mathcal{Y})$ preserves parallel pairs, and $rank(T)=r,$ where $1<r<\infty.$ Then $|\Lambda|=r,$ where $\Lambda=\{i\in J:Te_i\neq 0\}.$ 
\end{cor}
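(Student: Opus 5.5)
The plan is to apply Theorem \ref{th-rank} repeatedly, each time to the restriction of $T$ to the span of the coordinate vectors that $T$ does not kill.

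First, the inequality $r\leq|\Lambda|$. The range of $T$ is the closure of $span\{Te_i:i\in\Lambda\}$. If $\Lambda$ were finite with $|\Lambda|<r$, this span would have dimension less than $r$, which contradicts $rank(T)=r$. So either $\Lambda$ is infinite, or $\Lambda$ is finite with $|\Lambda|\geq r$.

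For the reverse inequality, suppose $|\Lambda|>r$ (this includes $\Lambda$ infinite). I would pick a finite set $F\subseteq\Lambda$ with $|F|=r+1$. Consider the restriction $S$ of $T$ to $\ell_1(F)$, viewed as the subspace of $\ell_1(J)$ spanned by $\{e_i:i\in F\}$. This subspace is isometrically embedded, so a parallel pair in $\ell_1(F)$ is also a parallel pair in $\ell_1(J)$. Hence $S\in\mathcal{L}(\ell_1(F),\mathcal{Y})$ preserves parallel pairs. The rank of $S$ is at most $r<r+1=\dim\ell_1(F)$, so $\ker(S)\neq\{0\}$. Also $Se_i=Te_i\neq0$ for every $i\in F$, so $S\neq 0$. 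Theorem \ref{th-rank} (whose index set is an arbitrary $J$, so it applies with $J$ replaced by $F$) then gives $rank(S)=1$.

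This holds for every such $F$. Given any two indices $i,j\in\Lambda$, enlarge $\{i,j\}$ to a set $F\subseteq\Lambda$ of size $r+1$; this is possible because $r+1\geq 3$ and $|\Lambda|\geq r+1$. Then $rank(S)=1$ forces $Te_i$ and $Te_j$ to be linearly dependent. Since all of them are non-zero, every $Te_i$ with $i\in\Lambda$ lies in $span\{Te_{i_0}\}$ for a fixed $i_0\in\Lambda$. The range of $T$ is the closed span of these vectors, so $rank(T)\leq1$, contradicting $r>1$. Therefore $|\Lambda|\leq r$, and with the first step $|\Lambda|=r$.

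The only delicate points are two checks. First, the restriction preserves parallel pairs; this follows from the isometric inclusion $\ell_1(F)\subseteq\ell_1(J)$. Second, Theorem \ref{th-rank} applies to a finite index set $F$; it does, since its statement allows any $J$. I also need the range of $T$ to equal the closed span of the $Te_i$; this follows because finitely supported vectors are dense in $\ell_1(J)$ and $T$ is bounded. No step should be a real obstacle.
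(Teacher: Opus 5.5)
Your proof is correct. It uses the same key lemma as the paper, Theorem \ref{th-rank} applied to a restriction of $T$, but deploys it differently.

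The paper restricts $T$ once, to all of $\ell_1(\Lambda)$, which may be infinite-dimensional. It then argues by contradiction. If $\ker(S)\neq\{0\}$, then, since every $Se_i\neq 0$, Theorem \ref{th-rank} would give $rank(S)=1$ and hence $rank(T)=1$. So $S$ is injective, and $\{Te_i:i\in\Lambda\}$ is linearly independent. Finiteness of $rank(T)$ then forces $\Lambda$ to be finite, and $range(T)=span\{Te_i:i\in\Lambda\}$ yields $|\Lambda|=r$ in one stroke. No separate lower bound and no pairwise argument are needed.

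You instead apply Theorem \ref{th-rank} only on finite-dimensional pieces $\ell_1(F)$ with $|F|=r+1$. There the hypothesis $\ker(S)\neq\{0\}$ comes for free from rank--nullity. You then glue the local conclusions together through pairwise linear dependence of the $Te_i$, and you prove the inequality $r\leq|\Lambda|$ separately.

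Your route never needs Theorem \ref{th-rank} for an infinite index set with a kernel vector of possibly infinite support, so it leans only on the finite-dimensional case of that theorem. The paper's route is shorter. It also gives, as a direct by-product, that the nonzero images $Te_i$ are linearly independent; in your argument this is visible only a posteriori from $|\Lambda|=r$.
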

\begin{proof}
	Define $S:\ell_1(\Lambda)\to \mathcal{Y}$ by 
	$$Sx=\sum_{i\in \Lambda}x_i Te_i, \text{ where }x=\sum_{i\in \Lambda}x_ie_i\in \ell_1(\Lambda).$$ 
	Then $S$ preserves parallel pairs. Note that $\ker(S)=\{0\}.$ For otherwise, since $Se_i\neq 0$ for all $i\in \Lambda,$ Theorem \ref{th-rank} implies that $rank(S)=1.$ Thus from
	$$1=\dim span\{Se_i:i\in \Lambda\}=\dim span\{Te_i:i\in \Lambda\},$$
	it follows that $rank(T)=1,$ a contradiction. Now $\ker(S)=\{0\}$ implies that $\{Se_i:i\in \Lambda\}=\{Te_i:i\in \Lambda\}$ is linearly independent. Since $rank(T)<\infty,$  $\Lambda$ is a finite set. Therefore $range(T)=span\{Te_i:i\in \Lambda\},$ and so $rank(T)=|\Lambda|.$
\end{proof}

From the last corollary it follows that if $T\in \mathcal{L}(\ell_1(J),\mathcal{Y})$ preserves parallel pairs,  and $1<rank(T)<\infty,$ then $\Lambda=\{i\in J:Te_i\neq 0\}$ is a finite set.  Moreover, $T$ preserves parallel (or TEA) pairs if and only if $S$ preserves parallel (or TEA) pairs. Therefore to explore parallel (or TEA) pair preserving finite rank  maps in $\mathcal{L}(\ell_1(J),\mathcal{Y}),$ it is enough to explore these maps in $\mathcal{L}(\ell_1^n,\mathcal{Y}).$ In this direction, we first assume $\mathcal{Y}=\ell_\infty^m,$ $m\geq 2.$ Before proceeding further, we would like to emphasize that the problem of characterizing parallel and TEA pair preservers in $\mathcal{L}(\ell_1^n,\ell_\infty^m)$ is notably challenging. Indeed we will show the rich structures possessed by such maps, specially in $\mathcal{L}(\ell_1^2,\ell_\infty^m).$
Let us begin by showing that if $T\in \mathcal{L}(\ell_1^n,\ell_\infty^n)$ is an invertible parallel pair preserver, then all the entries of an arbitrary column of $T$ have same magnitude.
\begin{lemma}\label{lem-inv}
	Let	$T\in \mathcal{L}(\ell_1^n,\ell_\infty^n)$ be invertible, where $n\geq 2.$ Suppose $T$ preserves parallel pairs and $T=(t_{ij})_{i,j=1}^n.$ Then for each $1\leq j\leq n,$ there exists $k_j>0$ such that $|t_{ij}|=k_j$ for all $1\leq i\leq n.$
\end{lemma}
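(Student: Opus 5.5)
The key observation is that each standard coordinate vector $e_j$ of $\ell_1^n$ forms a parallel pair with \emph{every} vector of $\ell_1^n$. Invertibility of $T$ then transports this to $\ell_\infty^n$, where being parallel to everything forces every coordinate to attain the norm.

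\textbf{Step 1.} Fix $1\leq j\leq n$ and let $y=(y_1,\ldots,y_n)\in \ell_1^n$ be arbitrary. By the characterization of parallel pairs in $\ell_1^n$ recalled in the introduction, $(e_j,y)$ is a parallel pair if and only if there is $\lambda\in\mathbb{T}$ with $\lambda\,\overline{(e_j)_k}\,y_k\geq 0$ for all $k$. For $k\neq j$ the left side is $0$, so only the index $k=j$ matters. If $y_j\neq 0$ we take $\lambda=\overline{y_j}/|y_j|$, and if $y_j=0$ any $\lambda$ works. Hence $(e_j,y)$ is a parallel pair for every $y\in\ell_1^n$.

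\textbf{Step 2.} Since $T$ preserves parallel pairs, $(Te_j,Ty)$ is a parallel pair in $\ell_\infty^n$ for all $y\in\ell_1^n$. Since $T$ is invertible, $range(T)=\ell_\infty^n$. Therefore $(Te_j,z)$ is a parallel pair for every $z\in\ell_\infty^n$.

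\textbf{Step 3.} Set $k_j=\|Te_j\|_\infty$; we have $k_j>0$ because $T$ is injective, so $Te_j\neq 0$. Suppose, for contradiction, that $|t_{ij}|<k_j$ for some $i$. Take $z=e_i\in\ell_\infty^n$. Then $z$ attains its norm only at the $i$-th coordinate, while $Te_j$ does not attain its norm there. By the characterization of parallel pairs in $\ell_\infty^n$ (both vectors must attain their norms at a common coordinate), $(Te_j,e_i)$ is not a parallel pair, which contradicts Step 2. Hence $|t_{ij}|=k_j$ for all $1\leq i\leq n$.

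There is no real obstacle in this argument. The only point needing care is Step 1, namely checking that the $\ell_1$ characterization makes $e_j$ parallel to every vector. After that, Step 3 is immediate from the $\ell_\infty^n$ criterion.
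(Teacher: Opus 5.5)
Your proof is correct and is essentially the paper's argument. The paper picks a nonzero $x$ in the kernel of the submatrix obtained by deleting the $i$-th row of $T$, so that $Tx$ is a nonzero multiple of $e_i$ (i.e.\ $x$ is a multiple of $T^{-1}e_i$). It then uses that $(e_j,x)$ is parallel in $\ell_1^n$ to force $Te_j$ to attain its norm at the $i$-th coordinate, exactly as in your Steps 1--3; you phrase the choice of $x$ through surjectivity and verify explicitly that $e_j$ is parallel to every vector, which the paper uses without comment.
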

\begin{proof}
	Let $1\leq j\leq n.$ Choose $1\leq i\leq n$ arbitrarily. Suppose $X_i$ is the $(n-1)\times n$  submatrix of $T$ obtained by removing the $i$-th row of $T.$ Choose a non-zero vector $x\in \ell_1^n$ such that $X_ix=0.$ Then for $1\leq k\leq n, ~k\neq i,$ the $k$-th coordinate of  $Tx$ is zero. For $1\leq j\leq n,$ $(e_j,x)$ is a parallel pair in $\ell_1^n,$ and so is $(Te_j,Tx)$ in $\ell_\infty^n.$ Since $Tx\neq 0,$  $Tx$ attains its norm only at the $i$-th coordinate. Consequently, $Te_j$ also attains its norm at $i$-th coordinate, yielding that $|t_{ij}|\geq |t_{kj}|$ for all $1\leq k\leq n.$ Since this is true for arbitrary $i,$ $|t_{ij}|$ is constant for all $1\leq i\leq n .$ 
\end{proof}

The following theorem is a characterization of the invertible linear maps in $\mathcal{L}(\ell_1^n,\ell_\infty^n)$ that preserve parallel pairs.

\begin{theorem}\label{th-inv}
	Suppose $T\in \mathcal{L}(\ell_1^n, \ell_\infty^n)$  is an invertible linear map, where $n\geq 2.$ Then the following are equivalent.
	\begin{enumerate}
		\item $T$  preserves parallel pairs.
		\item There exists an $n\times n$ invertible diagonal matrix $D$ such that $TD=(e^{i\xi_{rj}})_{r,j=1}^n,$ and for each $\Theta=(\theta_1,\theta_2,\ldots,\theta_n)$ $(\Theta\in [0,2\pi)^n$ if $\mathbb{F}=\mathbb{C}, $ and $\Theta\in \{0,\pi\}^n$ if $\mathbb{F}=\mathbb{R})$ there exists $1\leq i\leq n,$ such that  $\alpha_{jl,ik}\geq 0$ for all $1\leq k\leq n, 1\leq j<l\leq n,$ where  $$\alpha_{jl,ik}=\cos(\theta_j-\theta_l+\xi_{ij}-\xi_{il})-\cos(\theta_j-\theta_l+\xi_{kj}-\xi_{kl}).$$
	\end{enumerate}
\end{theorem}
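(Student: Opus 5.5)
The plan is to reduce, via Lemma \ref{lem-inv}, to the case where every entry of $T$ is unimodular, and then to translate parallelism in $\ell_1^n$ and in $\ell_\infty^n$ into a condition on phases. By Lemma \ref{lem-inv}, each column $j$ of $T$ has entries of constant modulus $k_j>0$. Put $D=\mathrm{diag}(1/k_1,\dots,1/k_n)$, so that $TD=(e^{i\xi_{rj}})$. Since $D$ is a positive diagonal matrix, it maps parallel pairs in $\ell_1^n$ onto parallel pairs: the condition $\lambda\overline{x_k}y_k\ge0$ is invariant under positive coordinatewise scaling. Hence $T$ preserves parallel pairs if and only if $TD$ does. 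For $(2)\Rightarrow(1)$ we may likewise replace $T$ by $TD$. So in both directions we may assume $T=(e^{i\xi_{rj}})$.

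Next I describe parallel pairs in $\ell_1^n$. The pair $(x,y)$ is parallel if and only if there is $\lambda\in\mathbb{T}$ and a common sign pattern $\Theta$ with $x=(|x_j|e^{i\theta_j})$ and $\lambda y=(|y_j|e^{i\theta_j})$. For coordinates where $x_j=0$ or $y_j=0$, the phase $\theta_j$ may be chosen freely. So it suffices to consider, for each fixed $\Theta$, the cone $C_\Theta=\{(s_je^{i\theta_j}):s_j\ge0\}$: $T$ preserves parallel pairs if and only if, for every $\Theta$, any two elements $Tx,Ty$ with $x,y\in C_\Theta$ are parallel in $\ell_\infty^n$.

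For $x\in C_\Theta$ we have $|(Tx)_r|^2=\sum_{j,l}s_js_l\cos(\theta_j-\theta_l+\xi_{rj}-\xi_{rl})$. Since the entries are unimodular, the diagonal terms give $\sum_j s_j^2$ independently of $r$. Therefore
\[
|(Tx)_i|^2-|(Tx)_k|^2=2\sum_{j<l}s_js_l\,\alpha_{jl,ik}.
\]
This identity drives both directions.

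For $(2)\Rightarrow(1)$: given $\Theta$, condition (2) supplies an index $i$ with $\alpha_{jl,ik}\ge0$ for all $k$ and all $j<l$. Since $s_js_l\ge0$, every $Tx$ with $x\in C_\Theta$ attains its norm at coordinate $i$. Hence all images of $C_\Theta$ are pairwise parallel in $\ell_\infty^n$.

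For $(1)\Rightarrow(2)$ I argue by contradiction. Suppose that for some $\Theta$ and every $i$ there exist $k_i$ and $j_i<l_i$ with $\alpha_{j_il_i,ik_i}<0$. Consider the finitely many vectors $u_{jl}=e^{i\theta_j}e_j+e^{i\theta_l}e_l\in C_\Theta$, together with the vectors $e^{i\theta_j}e_j$. I take $x\in C_\Theta$ to be a positive combination of them. Preservation implies that all images of vectors in $C_\Theta$ pairwise share a norming coordinate in $\ell_\infty^n$. I would then strengthen this to a single common norming coordinate for the whole cone $C_\Theta$. The natural route is a Helly-type argument in one dimension, or a direct argument: the sets $N(x)=\{r:|(Tx)_r|=\|Tx\|\}$ pairwise intersect. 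Take $x$ in the relative interior with $N(x)$ minimal; points near $x$ have norming sets contained in $N(x)$. Using pairwise intersection with nearby perturbations, I expect to force an $i\in N(x)$ that belongs to $N(y)$ for all $y$ in the cone. This is the main obstacle. Pairwise intersection does not generally imply a common point, so it must be exploited through linearity and convexity of the cone. I would first try using $x$ and $x+\epsilon y$: for small $\epsilon$, $N(x+\epsilon y)\subseteq N(x)$, and the index maximizing the derivative term is selected. Combined with invertibility of $T$ (as in Lemma \ref{lem-inv}, which makes every coordinate the unique norming coordinate of some image), this should yield a common $i$. Once a common $i$ exists, applying the identity to $x=u_{jl}$, where $s_j=s_l=1$ and all other $s$ vanish, gives $\alpha_{jl,ik}\ge0$ for all $k$ and $j<l$, which is (2). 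In the real case the same argument runs with $\Theta\in\{0,\pi\}^n$ and $\lambda=\pm1$.
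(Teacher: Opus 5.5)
Your reduction to $TD=(e^{i\xi_{rj}})$ via Lemma \ref{lem-inv}, the identity $|(Tx)_i|^2-|(Tx)_k|^2=2\sum_{j<l}s_js_l\alpha_{jl,ik}$ on $C_\Theta$, and your proof of $(2)\Rightarrow(1)$ are correct and agree with the paper. One small point: in $(2)$ the matrix $D$ need not be positive, but multiplying the $k$-th coordinate by $d_k\neq 0$ multiplies $\overline{x_k}y_k$ by $|d_k|^2>0$, so the reduction still applies.

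The genuine gap is the step you flag yourself. Write $N(z)=\{r:|z_r|=\|z\|_\infty\}$. For $(1)\Rightarrow(2)$ you need one index $i$ with $i\in N(Tx)$ for every $x\in C_\Theta$ (at least for all $u_{jl}$), and the proposal only says this ``should'' follow. Invertibility does not provide it: the vectors in the proof of Lemma \ref{lem-inv} whose images have a singleton norming set need not lie in $C_\Theta$.

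The paper's device does not provide it either. The paper extracts the common index from the infinite Ramsey theorem on a grid accumulating at $0$. That step uses only that each $TDx$ attains its norm somewhere, never the preservation hypothesis, and Ramsey's theorem colours unordered $n$-sets, not all of $V^n$. So it would yield $(2)$ for every invertible matrix with unimodular entries, which is false. Take the Fourier matrix $(\omega^{(r-1)(j-1)})_{r,j=1}^3$ with $\omega=e^{2\pi i/3}$, and $\Theta=(2\pi/3,0,0)$. The vectors $(\omega,1,0)$ and $(0,1,1)$ lie in $C_\Theta$, and their images have coordinate moduli $(1,2,1)$ and $(2,1,1)$, so they share no norming coordinate. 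Your insistence that the pairwise intersection of norming sets must be exploited is therefore exactly right.

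The missing argument is short and follows your perturbation idea, with one extra observation.
\begin{enumerate}
\item Pick $x\in C_\Theta$ with $|N(Tx)|$ minimal and let $y\in C_\Theta$. For small $\epsilon>0$ we have $x+\epsilon y\in C_\Theta$, and by continuity $N(T(x+\epsilon y))\subseteq N(Tx)$. Minimality forces equality.
\item Hence for $r,r'\in N(Tx)$ the real polynomials $\epsilon\mapsto|(Tx)_r+\epsilon(Ty)_r|^2$ and $\epsilon\mapsto|(Tx)_{r'}+\epsilon(Ty)_{r'}|^2$, of degree at most $2$, agree on an interval and therefore identically. Comparing leading coefficients gives $|(Ty)_r|=|(Ty)_{r'}|$.
\item Since $(x,y)$ is a TEA, hence parallel, pair in $\ell_1^n$, preservation gives some $r\in N(Tx)\cap N(Ty)$. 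By the previous step, $N(Tx)\subseteq N(Ty)$.
\end{enumerate}
Thus any $i\in N(Tx)$ is a norming index for all of $T(C_\Theta)$. Evaluating your identity at $u_{jl}$ then gives $\alpha_{jl,ik}\geq 0$ for all $k$ and all $j<l$, and the real case is identical. With this inserted, your route proves $(1)\Rightarrow(2)$ without any Ramsey argument.
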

\begin{proof}
	$(1)\Rightarrow (2).$ From Lemma \ref{lem-inv} it follows that 
	$T=(k_je^{i\xi_{rj}})_{r,j=1}^n \text{ for some } k_j>0.$ 
	For $D=diag(\frac{1}{k_1},\ldots,\frac{1}{k_n}),$ $TD=(e^{i\xi_{rj}})_{r,j=1}^n.$ By Theorem \ref{th-li}, $D:\ell_1^n\to \ell_1^n$ preserves parallel pairs, and so does $TD:\ell_1^n\to \ell_\infty^n.$ 
	Now suppose $\Theta=(\theta_1,\theta_2,\ldots,\theta_n).$ 
	Since for each $$x\in \Big\{(|x_1|e^{i\theta_1},\ldots, |x_n|e^{i\theta_n}):|x_k|\geq 0,1\leq k\leq n\Big\},$$ $TDx$ attains its norm at $r$-th coordinate for some $1\leq r\leq n,$ using infinite Ramsey theorem \cite{R}, it follows that
	\begin{enumerate}[(a)]
		\item there  is a subset $A_s\subseteq (0,\infty)$ for each $1\leq s\leq n$ such that $0$ is a limit point of $A_s,$
		\item there exists $i\in \{1,2,\ldots,n\}$ such that $TDx$ attains its norm at $(TDx)_i,$ the $i$-th coordinate of $TDx,$  for all $x\in A:= \{(x_1e^{i\theta_1},\ldots, x_ne^{i\theta_n}):x_s\in A_s,1\leq s\leq n\}.$ 
	\end{enumerate}
Indeed, suppose $U=\{\frac{1}{k}:k\in \mathbb{N}\}.$  Define a map as follows. 
\[f:U^n\to \{1,2,\ldots,n\} \text{ by } f(b_1,\ldots,b_n)=j,\]  where $ j$ is the smallest coordinate at which $TD(b_1e^{i\theta_1},\ldots,b_ne^{i\theta_n})$  
attains its norm. 
Then by the infinite Ramsey theorem, there exist an infinite subset $V\subseteq U$ and $1\leq i\leq n,$ such that $f(a_1,\ldots,a_n)=i$ for all $a_1,\ldots,a_n\in V.$ Suppose $V=\{\frac{1}{r_k}:k\in \mathbb{N}\},$ and consider $A_s=V$ for all $1\leq s\leq n.$ Thus $(a)$ and $(b)$ hold.\\
	From  $(b)$ it follows that for all $1\leq k\leq n$ and $x\in A,$
	\begin{eqnarray*}
		&&|(TDx)_i|\geq|(TDx)_k|	\\
		&\Rightarrow &|\sum_{r=1}^ne^{i(\xi_{ir}+\theta_r)}x_r|\geq   |\sum_{r=1}^ne^{i(\xi_{kr}+\theta_r)}x_r|\\
		&\Rightarrow &\sum_{1\leq j<l\leq n}x_jx_l \Big\{\cos(\theta_j-\theta_l+\xi_{ij}-\xi_{il})-\cos(\theta_j-\theta_l+\xi_{kj}-\xi_{kl})\Big\}\geq 0\\
		&\Rightarrow &\sum_{1\leq j<l\leq n}x_jx_l \alpha_{jl,ik}\geq 0\\
		&\Rightarrow &x_jx_l \alpha_{jl,ik}\geq 0, \text{~taking } x_s\to 0 \text{ for all } s\in \{1,\ldots,n\}\setminus\{j,l\}\\
		&\Rightarrow & \alpha_{jl,ik}\geq 0,\text{ for all } 1\leq j<l\leq n. 
	\end{eqnarray*}
	$(2)\Rightarrow (1).$ Suppose $(x,y)$ is a parallel pair in $\ell_1^n.$ Then $x=(|x_1|e^{i\theta_1},\ldots, |x_n|e^{i\theta_n}),$ and $\lambda y=(|y_1|e^{i\theta_1},\ldots, |y_n|e^{i\theta_n})$ for some $\lambda\in \mathbb{T},$ and $\Theta=(\theta_1,\theta_2,\ldots,\theta_n).$ Now, 
	\begin{eqnarray*}
		&&	\alpha_{jl,ik}\geq 0,\text{ for all } 1\leq j<l\leq n\\
		&\Rightarrow &\sum_{1\leq j<l\leq n}|x_jx_l |\alpha_{jl,ik}\geq 0\\
		&\Rightarrow &|\sum_{r=1}^ne^{i(\xi_{ir}+\theta_r)}|x_r||\geq   |\sum_{r=1}^ne^{i(\xi_{kr}+\theta_r)}|x_r||\\
		&\Rightarrow &|(TDx)_i|\geq|(TDx)_k|, \text{  for all }1\leq k\leq n.
	\end{eqnarray*}
	Similarly, $|(TDy)_i|\geq|(TDy)_k|,$ for all $1\leq k\leq n.$ Thus $TDx$ and $TDy$ attain their norm at the $i$-th coordinate, and so $(TDx,TDy)$ is a parallel pair in $\ell_\infty^n.$ Therefore $TD:\ell_1^n\to \ell_\infty^n$ preserves parallel pairs. Moreover, since $D^{-1}:\ell_1^n\to \ell_1^n$ preserves parallel pairs, $T:\ell_1^n\to \ell_\infty^n$ preserves parallel pairs.
\end{proof}

As an immediate consequence of Theorem \ref{th-inv}, we  get the following characterization of parallel pair preservers in $\mathcal{L}(\ell_1^2,\ell_\infty^2).$

\begin{cor}
	Suppose $T=\begin{bmatrix}
		a&b\\
		c&d
	\end{bmatrix}.$  Then $T:\ell_1^2\to \ell_\infty^2$ preserves parallel pairs if and only if either of the following holds.
\begin{enumerate}
	\item $rank(T)\leq 1.$
	\item $T$ is invertible, and $|a|=|c|,$ $ |b|=|d|.$
	\end{enumerate}
\end{cor}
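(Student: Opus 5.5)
We split according to whether $T$ is invertible, so that every case falls under an earlier result.

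\textbf{$T$ not invertible.} Then $rank(T)\leq 1$. Since rank at most one maps always preserve parallel pairs, these are exactly the preservers in this case, which is condition (1).

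\textbf{$T$ invertible, necessity.} Assume $T$ preserves parallel pairs. Lemma \ref{lem-inv} with $n=2$ gives $|a|=|c|$ and $|b|=|d|$. Invertibility makes these moduli nonzero, so this is condition (2).

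\textbf{$T$ invertible, sufficiency.} Assume (2) holds. By Theorem \ref{th-inv} we must check that, after normalising, the $\alpha$-condition is automatically satisfied when $n=2$. Put $D=diag(1/|a|,1/|b|)$, so that
\[
TD=\begin{bmatrix} e^{i\xi_{11}}&e^{i\xi_{12}}\\ e^{i\xi_{21}}&e^{i\xi_{22}}\end{bmatrix}.
\]
With $n=2$ the only pair is $(j,l)=(1,2)$. For $i\in\{1,2\}$ and $k=i$ we get $\alpha_{12,ii}=0$ trivially. So only $k\neq i$ matters, and there
\[
\alpha_{12,12}=\cos(\phi+\delta_1)-\cos(\phi+\delta_2)=-\alpha_{12,21},
\]
where $\phi=\theta_1-\theta_2$ and $\delta_r=\xi_{r1}-\xi_{r2}$. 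Thus for each $\Theta$ either $\alpha_{12,12}\geq 0$, in which case we take $i=1$, or $\alpha_{12,21}\geq 0$, in which case we take $i=2$. Condition (2) of Theorem \ref{th-inv} therefore holds for every $\Theta$, in both the real and complex cases, and $T$ preserves parallel pairs.

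\textbf{Direct check of sufficiency.} One can also see this without Theorem \ref{th-inv}. For $x$ with the argument pattern $\Theta$, the quantity $|(TDx)_1|^2-|(TDx)_2|^2$ equals $2|x_1x_2|\,\alpha_{12,12}$. Its sign depends only on $\Theta$, so every such $x$, and hence both members of a parallel pair, attains its norm at the same coordinate.

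\textbf{Main obstacle.} Only the sign-antisymmetry $\alpha_{12,12}=-\alpha_{12,21}$ needs care, and it is immediate.
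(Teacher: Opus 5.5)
Your proposal is correct and follows the paper's proof. For necessity you cite Lemma \ref{lem-inv} directly, whereas the paper reads the equal column moduli off Theorem \ref{th-inv}, which itself rests on that lemma. For sufficiency you normalise by a positive diagonal matrix and use $\alpha_{12,12}=-\alpha_{12,21}$ to invoke Theorem \ref{th-inv}, exactly as the paper does. Your extra direct check via $|(TDx)_1|^2-|(TDx)_2|^2=2|x_1x_2|\,\alpha_{12,12}$ is just the $n=2$ instance of the computation inside Theorem \ref{th-inv}. It is valid because $D^{-1}x$ has the same argument pattern as $x$.
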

\begin{proof}
	Suppose $T$ preserves parallel pairs and $T$ is invertible. Then by Theorem \ref{th-inv}, there exists an invertible diagonal matrix say $D=diag(r,s)$ such that $TD=\begin{bmatrix}
		e^{i\xi_{11}}&e^{i\xi_{12}}\\
		e^{i\xi_{21}}&e^{i\xi_{22}}
	\end{bmatrix},$ that is, 	$T=\begin{bmatrix}
		\frac{1}{r}e^{i\xi_{11}}& \frac{1}{s} e^{i\xi_{12}}\\
		\frac{1}{r} e^{i\xi_{21}}& \frac{1}{s} e^{i\xi_{22}}
	\end{bmatrix}.$ Hence $|a|=|c|=\frac{1}{r},$ and $ |b|=|d|=\frac{1}{s}.$\\
	Conversely, if $(1)$ holds, then clearly $T$ preserves parallel pairs. Suppose $(2)$ holds. Assume that $|a|=|c|=\frac{1}{r},$ and $ |b|=|d|=\frac{1}{s}.$ Suppose $T=\begin{bmatrix}
		\frac{1}{r}e^{i\xi_{11}}& \frac{1}{s} e^{i\xi_{12}}\\
		\frac{1}{r} e^{i\xi_{21}}& \frac{1}{s} e^{i\xi_{22}}
	\end{bmatrix}.$ Then  $TD=\begin{bmatrix}
		e^{i\xi_{11}}&e^{i\xi_{12}}\\
		e^{i\xi_{21}}&e^{i\xi_{22}}
	\end{bmatrix},$ where $D=diag(r,s).$ Let $\Theta=(\theta_1,\theta_2).$ Note that $\alpha_{12,12}=-\alpha_{12,21}.$ Therefore for $\Theta,$ either $\alpha_{12,12}\geq 0$ or $\alpha_{12,21}\geq 0.$ Consequently, it follows from Theorem \ref{th-inv} that $T$ preserves parallel pairs.
\end{proof}

	Let $T:\ell_1^n\to \ell_\infty^m$ be a linear map, and $T=(t_{ij}).$ 	Suppose $ t_{ij}=|t_{ij}|e^{i\phi_{ij}},$ for all $1\leq i\leq m, 1\leq j\leq n.$  In what follows, we  always assume that $\|Te_j\|=k_j,$ for $1\leq j\leq n,$ and  $$C_{j1}=\{i\in\{1,\ldots,m\}:|t_{ij}|=k_j\},\quad C_{j2}=\{i\in\{1,\ldots,m\}:|t_{ij}|<k_j\}.$$
	Let $C=\{1,\ldots,m\}\setminus \Big(\cap_{j=1}^n C_{j1}\Big).$
 In the next proposition,  we first obtain a sufficient condition for  $T$ to preserve parallel pairs.
\begin{prop}\label{prop-nsuff}
Suppose for each $\Theta=(\theta_1,\ldots,\theta_n),$ $(\Theta\in [0,2\pi)^n$ if $\mathbb{F}=\mathbb{C},$ and $ \Theta\in \{0,\pi\}^n,$ if $\mathbb{F}=\mathbb{R}),$ there exists $s\in \cap _{1\leq k\leq n}C_{k1}$ such that for all $1\leq j<l\leq n,$ 
\begin{eqnarray*}
	\cos(\theta_j+\phi_{sj}-\theta_l-\phi_{sl})
\geq \max\Big\{\frac{|t_{rj}t_{rl}|}{k_jk_l}\cos(\theta_j+\phi_{rj}-\theta_l-\phi_{rl}):1\leq r\leq m\Big\}.
 \end{eqnarray*}
	Then $T$ preserves parallel pairs.
\end{prop}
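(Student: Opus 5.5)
The plan follows the $(2)\Rightarrow(1)$ direction of Theorem \ref{th-inv}: show that any parallel pair in $\ell_1^n$ is sent to two vectors that attain their norm at a common coordinate $s$.

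\textbf{Setup.} Let $(x,y)$ be a parallel pair in $\ell_1^n$. As recalled in the introduction, there exist $\lambda\in\mathbb{T}$ and $\Theta=(\theta_1,\ldots,\theta_n)$ such that
\[x=(|x_1|e^{i\theta_1},\ldots,|x_n|e^{i\theta_n}),\qquad \lambda y=(|y_1|e^{i\theta_1},\ldots,|y_n|e^{i\theta_n}).\]
In the real case, $\Theta$ may be taken in $\{0,\pi\}^n$. Fix the index $s\in\cap_{k}C_{k1}$ supplied by the hypothesis for this $\Theta$. It suffices to show that $Tx$ and $T(\lambda y)$ both attain their norm at the $s$-th coordinate. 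Then $(Tx,T\lambda y)$ is a parallel pair in $\ell_\infty^m$, and by homogeneity so is $(Tx,Ty)$.

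\textbf{Main estimate.} Since $x$ and $\lambda y$ have the same form, it is enough to prove: for every $r\in\{1,\ldots,m\}$ and every $x$ of the form $(|x_1|e^{i\theta_1},\ldots,|x_n|e^{i\theta_n})$, we have $|(Tx)_s|\ge|(Tx)_r|$. Expand both squared moduli:
\[|(Tx)_r|^2=\sum_{j}|t_{rj}|^2|x_j|^2+2\sum_{j<l}|x_jx_l|\,|t_{rj}t_{rl}|\cos(\theta_j+\phi_{rj}-\theta_l-\phi_{rl}),\]
and similarly for $s$. Because $s\in\cap_k C_{k1}$, we have $|t_{sj}|=k_j\ge|t_{rj}|$. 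Hence the diagonal terms satisfy $\sum_j k_j^2|x_j|^2\ge\sum_j|t_{rj}|^2|x_j|^2$.

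For the cross terms, multiply the hypothesis by $k_jk_l$. This gives
\[k_jk_l\cos(\theta_j+\phi_{sj}-\theta_l-\phi_{sl})\ge |t_{rj}t_{rl}|\cos(\theta_j+\phi_{rj}-\theta_l-\phi_{rl})\]
for every $r$. Multiplying by $|x_jx_l|\ge0$ and summing over $j<l$ compares the cross terms termwise. Adding the diagonal and cross-term comparisons yields $|(Tx)_s|^2\ge|(Tx)_r|^2$ for all $r$. So $\|Tx\|=|(Tx)_s|$, and the same holds for $T(\lambda y)$.

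\textbf{Expected obstacle.} The only delicate point is bookkeeping. One must check that the single $\Theta$ common to $x$ and $\lambda y$ determines one index $s$ that works for both vectors; it does, because the hypothesis depends only on $\Theta$ and not on the moduli $|x_j|$. One must also check that the real case needs only $\Theta\in\{0,\pi\}^n$, which holds because real scalars have arguments $0$ or $\pi$. Coordinates with $x_j=0$ need no special care: their arguments are arbitrary, and the corresponding terms vanish.
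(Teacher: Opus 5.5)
Your proposal is correct and follows essentially the same route as the paper: write $x$ and $\lambda y$ with the common argument vector $\Theta$, expand $|(Tx)_s|^2$ and $|(Tx)_r|^2$, and compare them termwise using the hypothesis, so that $Tx$ and $Ty$ both attain their norm at the $s$-th coordinate. You also make explicit the diagonal comparison $|t_{sj}|=k_j\geq |t_{rj}|$, which the paper's first implication uses without stating it.
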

\begin{proof}
	Suppose $(x,y)$ is a parallel pair in $\ell_1^n.$ Then $x=(|x_1|e^{i\theta_1},\ldots, |x_n|e^{i\theta_n}),$ and $\lambda y=(|y_1|e^{i\theta_1},\ldots, |y_n|e^{i\theta_n})$ for some $\lambda\in \mathbb{T},$ and $\Theta=(\theta_1,\theta_2,\ldots,\theta_n).$  Now for all $1\leq r\leq m,$ and $1\leq j<l\leq m,$ 
	\begin{eqnarray*}
	&&	\cos(\theta_j+\phi_{sj}-\theta_l-\phi_{sl})\geq \frac{|t_{rj}t_{rl}|}{k_jk_l}\cos(\theta_j+\phi_{rj}-\theta_l-\phi_{rl})\\
	&\Rightarrow& 	\sum_{j=1}^m |k_jx_j|^2+2\sum_{1\leq j<l\leq m}|k_jx_jk_lx_l|\cos(\theta_j+\phi_{sj}-\theta_l-\phi_{sl})\geq \\
	&&	\quad \sum_{j=1}^m |t_{rj}x_j|^2+2\sum_{1\leq j<l\leq m}|t_{rj}x_jt_{rl}x_l|\cos(\theta_j+\phi_{rj}-\theta_l-\phi_{rl})\\
	&\Rightarrow& 		\Big\{\sum_{j=1}^m |t_{sj}x_j|\cos(\theta_j+\phi_{sj})\Big\}^2+	\Big\{\sum_{j=1}^m |t_{sj}x_j|\sin(\theta_j+\phi_{sj})\Big\}^2\geq \\
	&&	\quad 	\Big\{\sum_{j=1}^m |t_{rj}x_j|\cos(\theta_j+\phi_{rj})\Big\}^2+	\Big\{\sum_{j=1}^m |t_{rj}x_j|\sin(\theta_j+\phi_{rj})\Big\}^2, ~(\text{since } k_j=|t_{sj}|)\\
			&\Rightarrow& 		|\sum_{j=1}^m |t_{sj}x_j|e^{i(\theta_j+\phi_{sj})}|^2\geq 	|\sum_{j=1}^m |t_{rj}x_j|e^{i(\theta_j+\phi_{rj})}|^2\\
		&\Rightarrow& 		|\sum_{j=1}^m t_{sj}x_j|\geq 	|\sum_{j=1}^m t_{rj}x_j| . 
	\end{eqnarray*}
	Similarly $	|\sum_{j=1}^m t_{sj}y_j|\geq 	|\sum_{j=1}^m t_{rj}y_j|. $ Therefore both $Tx,Ty$ attain their norm at the $s$-th coordinate, and so $(Tx,Ty)$ is a parallel pair in $\ell_\infty^m.$ Thus $T$ preserves parallel pairs.
\end{proof}
 The following sufficient condition for parallel pair preservers in $\mathcal{L}(\ell_1^2,\ell_\infty^m)$ immediately follows from Proposition \ref{prop-nsuff}.  For the sake of completeness, we include a proof here.   In what follows, we denote  $\alpha_s=\frac{|t_{s1}t_{s2}|}{k_1k_2}$ for all  $1\leq s\leq m.$
%
\begin{cor}\label{cor-03}
	Suppose $T:\ell_1^2\to \ell_\infty^m$ is given by $T=(t_{ij}),$ where $t_{i1}\geq 0,$ and $t_{i2}=|t_{i2}|e^{i\phi_i}$ for $1\leq i\leq m.$ 	Assume either of the following holds:
	\begin{enumerate}
		\item $|C_{11}\cap C_{21}|=m.$ 
		\item $1<|C_{11}\cap C_{21}|<m.$ Moreover, if $\mathbb{F}=\mathbb{C},$ then for each  $\theta\in [0,2\pi)$
		\begin{equation}\label{eq-gen}
			\max\{\cos(\theta+\phi_j):j\in C_{11}\cap C_{21}\} \geq \max\Big\{\alpha_r\cos(\theta+\phi_r):r\in C \Big\}.	
		\end{equation}	
	If $\mathbb{F}=\mathbb{R},$ then  there exist $i,j\in C_{11}\cap C_{21}$ such that $\phi_i=0,\phi_j=\pi.$
	\end{enumerate}
%
Then $T$ preserves parallel pairs.
\end{cor}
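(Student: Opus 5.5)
We will deduce the statement from Proposition \ref{prop-nsuff} with $n=2$, or argue directly in the same manner. Let $(x,y)$ be a parallel pair in $\ell_1^2$. Then there are $\lambda\in\mathbb{T}$ and $\theta_1,\theta_2$ with $x=(|x_1|e^{i\theta_1},|x_2|e^{i\theta_2})$ and $\lambda y=(|y_1|e^{i\theta_1},|y_2|e^{i\theta_2})$. It suffices to find one index $s$, depending only on $\Theta=(\theta_1,\theta_2)$, at which both $Tx$ and $T(\lambda y)$ attain their norms. Since $t_{i1}\geq 0$, for $u=(|u_1|e^{i\theta_1},|u_2|e^{i\theta_2})$ we have
\[|(Tu)_r|^2=t_{r1}^2|u_1|^2+|t_{r2}|^2|u_2|^2+2t_{r1}|t_{r2}||u_1u_2|\cos(\theta+\phi_r),\qquad \theta:=\theta_2-\theta_1 .\]
For $s\in C_{11}\cap C_{21}$ the first two terms are maximal, equal to $k_1^2|u_1|^2+k_2^2|u_2|^2$. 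Hence $|(Tu)_s|\geq|(Tu)_r|$ for every $u$ of this phase pattern if
\[\cos(\theta+\phi_s)\geq \alpha_r\cos(\theta+\phi_r)\quad\text{for all }r.\]
This is exactly the hypothesis of Proposition \ref{prop-nsuff}. The task therefore reduces to choosing $s\in C_{11}\cap C_{21}$ so that this inequality holds for all $r$.

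In case (1), every $r$ lies in $C_{11}\cap C_{21}$ and $\alpha_r=1$. We take $s$ to be an index maximizing $\cos(\theta+\phi_s)$ over all $r$, and the inequality holds trivially.

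In case (2) with $\mathbb{F}=\mathbb{C}$, we take $s\in C_{11}\cap C_{21}$ maximizing $\cos(\theta+\phi_j)$ over $j\in C_{11}\cap C_{21}$. For $r\in C_{11}\cap C_{21}$ we have $\alpha_r=1$, and the inequality holds by the choice of $s$. For $r\in C$ it holds by \eqref{eq-gen}. Note that $C$ is exactly the complement of $C_{11}\cap C_{21}$, so all $r$ are covered.

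In case (2) with $\mathbb{F}=\mathbb{R}$, we have $\theta\in\{0,\pi\}$ modulo $2\pi$ and every $\phi_r\in\{0,\pi\}$, so $\cos(\theta+\phi_r)=\pm1$. By hypothesis there are $i,j\in C_{11}\cap C_{21}$ with $\phi_i=0$ and $\phi_j=\pi$. We choose $s=i$ if $\theta=0$ and $s=j$ if $\theta=\pi$, which gives $\cos(\theta+\phi_s)=1\geq \alpha_r\geq \alpha_r\cos(\theta+\phi_r)$, since $\alpha_r\leq 1$. If some entry $t_{r2}$ vanishes, its phase is irrelevant because $\alpha_r=0$.

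Having found $s$ for this $\Theta$, both $Tx$ and $T(\lambda y)$ attain their norms at coordinate $s$. Hence $(Tx,Ty)$ is a parallel pair in $\ell_\infty^m$, and $T$ preserves parallel pairs.

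The only delicate points are bookkeeping: confirming that $C$ is precisely the set of indices not covered by $\alpha_r=1$, and handling zero entries, for which $\phi_r$ is undefined but $\alpha_r=0$ makes the corresponding term vanish. Neither causes a genuine obstacle.
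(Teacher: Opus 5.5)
Your proposal is correct and follows the paper's proof. In both, the statement is reduced to Proposition \ref{prop-nsuff} with $n=2$, using $\cos(\theta_1-\theta_2-\phi_s)=\cos(\theta+\phi_s)$, and $s$ is chosen as follows: in case (1) it maximizes $\cos(\theta+\phi_r)$ over all rows; in the complex case of (2) it maximizes over $C_{11}\cap C_{21}$, with \eqref{eq-gen} covering $r\in C$; in the real case it is $i$ or $j$ according to whether $\theta\equiv 0$ or $\pi$. Your explicit expansion of $|(Tu)_r|^2$ just repeats the argument of Proposition \ref{prop-nsuff} for two columns.
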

\begin{proof}
$(1)$ Since $C_{11}\cap C_{21}=\{1,\ldots,m\},$ $|t_{r1}|=k_1,$ and $|t_{r2}|=k_2$ for all $1\leq r\leq m.$ Now the proof immediately follows from Proposition \ref{prop-nsuff} by observing that for each $\Theta=(\theta_1,\theta_2),$ there exists $1\leq s\leq m$ such that $$\max\{\cos(\theta_1-\theta_2-\phi_{r}):1\leq r\leq m\}=\cos(\theta_1-\theta_2-\phi_{s}).$$ 	

$(2)$ First consider $\mathbb{F}=\mathbb{C}.$  Suppose $\Theta=(\theta_1,\theta_2)\in[0,2\pi)^2.$ 
Let $\theta=\theta_2-\theta_1,$ and  $$\max\{\cos(\theta+\phi_j):j\in C_{11}\cap C_{21}\}=\cos(\theta+\phi_s)$$ for some $s\in C_{11}\cap C_{21}.$ By \eqref{eq-gen}, and observing that $\alpha_r=1$ for $r\in C_{11}\cap C_{21},$ we have
\begin{eqnarray*}
\cos(\theta+\phi_s)&\geq& \max\Big\{\alpha_r\cos(\theta+\phi_r):1\leq r\leq m \Big\}\\
\Rightarrow \cos(\theta_1-\theta_2-\phi_s)&\geq& \max\Big\{\alpha_r\cos(\theta_1-\theta_2-\phi_r):1\leq r\leq m \Big\}.
\end{eqnarray*}
Therefore by Proposition \ref{prop-nsuff}, $T$ preserves parallel pairs.\\
Now consider $\mathbb{F}=\mathbb{R}.$ For $\Theta=(\theta_1,\theta_2)\in\{0,\pi\}^2,$ $\theta_1-\theta_2\in \{0,\pi,-\pi\}.$ Since $$\max\{\cos(\theta_1-\theta_2-\phi_i), \cos(\theta_1-\theta_2-\phi_j)\}=1\geq \max\Big\{\alpha_r\cos(\theta_1-\theta_2-\phi_r):1\leq r\leq m \Big\},$$
Proposition \ref{prop-nsuff} implies that $T$ preserves parallel pairs.
\end{proof}

Note that, if $T:\ell_1^n\to \ell_\infty^m$ preserves parallel pairs, then so does the restriction of $T$ on $\ell_1^2, $ say $S.$  Indeed $S=TR,$ where $R: \ell_1^2\to \ell_1^n$ is defined as $Re_1=e_1,Re_2=e_2. $ By Theorem \ref{th-li}, $R$ preserves parallel pairs, and so does $TR:\ell_1^2\to \ell_\infty^m.$  Therefore to get necessary conditions for parallel pair preserving maps in $\mathcal{L}(\ell_1^n,\ell_\infty^m)$, we only consider $T:\ell_1^2\to \ell_\infty^m,$ where $m\geq 3.$  
Observe that if $P_1,P_2$ are generalized permutation matrices of size $m$ and $2$ respectively, then by \cite{LTWW}, $P_1:\ell_\infty^m\to \ell_\infty^m$ and $P_2:\ell_1^2\to \ell_1^2$ preserve parallel pairs. Therefore $T$ preserves parallel pairs if and only if $P_1TP_2:\ell_1^2\to \ell_\infty^m$ does. \\
Assume $T=\begin{bmatrix}
	t_{11}&t_{12}\\
	t_{21}&t_{22}\\
	\vdots &\vdots\\
	t_{m1}&t_{m2}
\end{bmatrix}:\ell_1^2\to \ell_\infty^m$ preserves parallel pairs. If necessary, by pre and post multiplying $T$ with generalized permutation matrices of appropriate size, without loss of generality, we may assume that $t_{j1}\geq 0,$ and $ t_{j2}=|t_{j2}|e^{i\phi_j},$ for all $1\leq j\leq m.$ Observe that  $C_{11}\cap C_{21}\neq \emptyset,$ for otherwise, $(Te_1,Te_2)$ is not a parallel pair in $\ell_\infty^m,$ which contradicts that  $(e_1,e_2)$ is a parallel pair in $\ell_1^2.$  Assume that $|C_{11}\cap C_{21}|<m,$ and  $C=\{1,\ldots,m\}\setminus(C_{11}\cap C_{21}).$ 
 Express $C$ as the union of $3$ disjoint sets, namely,   $$C= \Big(C_{12}\cap C_{21}\Big) \cup \Big(C_{11}\cap C_{22}\Big)\cup \Big(C_{12}\cap C_{22}\Big).$$ 	For $s\in C,$
 consider the sets 
\begin{eqnarray*}
A_s^+&:=&\{i\in  C_{11}\cap C_{21}:\cos(\phi_i)-\alpha_s \cos(\phi_s)>0\},\\
A_s^-&:=&\{i\in  C_{11}\cap C_{21}:\cos(\phi_i)-\alpha_s  \cos(\phi_s)<0\},\\
A_s^0&:=&\{i\in  C_{11}\cap C_{21}:\cos(\phi_i)=\alpha_s \cos(\phi_s)\}.
\end{eqnarray*} 
For necessary condition on $T,$ we consider the cases 
\[(C_{11}\cap C_{22})\cup (C_{12}\cap C_{21})\neq \emptyset, \quad \text{and } \quad C_{12}\cap C_{22}\neq \emptyset \] separately. 
\begin{prop}\label{prop-nec}
	Suppose $(C_{11}\cap C_{22})\cup (C_{12}\cap C_{21})\neq \emptyset,$ and $T$ preserves parallel pairs. 
	Then 
\begin{enumerate}
\item For each $\theta,$ $(\theta\in [0,2\pi)$ if $\mathbb{F}=\mathbb{C},$ and $\theta\in \{0,\pi\}$ if $\mathbb{F}=\mathbb{R})$ 
\begin{equation}\label{eq-06}
	\begin{split}
\max\{\cos(\theta+\phi_j):j\in C_{11}\cap C_{21}\}
\geq  \max\Big\{\alpha_s\cos(\theta+\phi_s):s\in C_{11}\cup C_{21}\Big\}.	
\end{split}
\end{equation}	
\item $|C_{11}\cap C_{21}|\geq 2.$
\end{enumerate}
\end{prop}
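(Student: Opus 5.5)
Throughout we keep the normalization fixed above: $t_{j1}\geq 0$ and $t_{j2}=|t_{j2}|e^{i\phi_j}$. For a parallel pair in $\ell_1^2$ we use the test vectors $x_{a,b}=(a,be^{i\theta})$ with $a,b\geq 0$, together with $e_1$ and $e_2$. Each $x_{a,b}$ forms a parallel pair with $e_1$ and with $e_2$ (the coordinatewise phases are compatible). Hence $(Tx_{a,b},Te_1)$ and $(Tx_{a,b},Te_2)$ are parallel in $\ell_\infty^m$.

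Since $Te_1$ attains its norm exactly on $C_{11}$ and $Te_2$ exactly on $C_{21}$, this means $Tx_{a,b}$ attains its norm at some index of $C_{11}$, and also at some index of $C_{21}$. The $r$-th coordinate of $Tx_{a,b}$ has squared modulus
\[
|t_{r1}|^2a^2+|t_{r2}|^2b^2+2ab|t_{r1}t_{r2}|\cos(\theta+\phi_r).
\]
We scale $a=k_2u$, $b=k_1v$ with $u,v>0$. For $r\in C_{11}\cap C_{21}$ this becomes $k_1^2k_2^2\bigl(u^2+v^2+2uv\cos(\theta+\phi_r)\bigr)$. For a general $r$ it is at most $k_1^2k_2^2\bigl(u^2+v^2+2uv\,\alpha_r\cos(\theta+\phi_r)\bigr)$, with the first or second square term strictly smaller when $r\in C_{12}$ or $r\in C_{22}$.

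For part (1), suppose (\ref{eq-06}) fails for some $\theta$. Then there is $s\in C_{11}\cup C_{21}$ with $\alpha_s\cos(\theta+\phi_s)>\cos(\theta+\phi_j)$ for all $j\in C_{11}\cap C_{21}$. Necessarily $s\notin C_{11}\cap C_{21}$, since $\alpha_j=1$ there. Say $s\in C_{11}\cap C_{22}$; the other case is symmetric. We choose $u=1$ and $v=\varepsilon$ small. The $s$-th coordinate has squared modulus $k_1^2k_2^2\bigl(1+2\varepsilon\alpha_s\cos(\theta+\phi_s)\bigr)+O(\varepsilon^2)$. Indices in $C_{12}$ are dominated, because their leading term $|t_{r1}|^2k_2^2$ is strictly below $k_1^2k_2^2$. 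Hence for small $\varepsilon$ the maximum over all $r$ is attained in $C_{11}$. On $C_{11}\cap C_{21}$ the values are strictly smaller than at $s$, by the assumed strict inequality at first order.

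We must also show the maximum avoids $C_{11}\cap C_{21}$ completely; it suffices that it is attained only in $C_{11}\cap C_{22}$. Then $Tx_{a,b}$ attains its norm at no index of $C_{21}$, so $(Tx_{a,b},Te_2)$ is not parallel, a contradiction. The delicate point is a careful first-order comparison in $\varepsilon$. This requires handling ties within $C_{11}\cap C_{22}$ (harmless) and verifying that the $O(\varepsilon^2)$ terms cannot restore an index of $C_{11}\cap C_{21}$. The latter holds because the first-order gap is strict and $m$ is finite.

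In the real case only $\theta\in\{0,\pi\}$ is needed, because $x_{a,b}$ then has real entries. The same argument applies verbatim.

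For part (2), we know $C_{11}\cap C_{21}\neq\emptyset$. Suppose it equals $\{j\}$ and pick $s\in(C_{11}\cap C_{22})\cup(C_{12}\cap C_{21})$. Applying (\ref{eq-06}) at the $\theta$ with $\cos(\theta+\phi_j)=-1$ gives $-1\geq \alpha_s\cos(\theta+\phi_s)$. This forces $\alpha_s=1$, which contradicts $s\in C$, unless $\alpha_s\cos(\theta+\phi_s)\le -1$; the latter again forces $\alpha_s=1$. This is immediate in the complex case, taking $\theta=\pi-\phi_j$.

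In the real case $\phi_j,\phi_s\in\{0,\pi\}$, and $\theta\in\{0,\pi\}$ can still be chosen with $\cos(\theta+\phi_j)=-1$. Then the inequality reads $-1\geq\pm\alpha_s$. This is impossible since $\alpha_s<1$, with $\alpha_s=0$ also allowed. Hence $|C_{11}\cap C_{21}|\geq 2$.

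I expect the main obstacle to be the perturbation step in part (1): choosing the direction $(1,\varepsilon)$ versus $(\varepsilon,1)$ according to which of $C_{11}\cap C_{22}$ or $C_{12}\cap C_{21}$ contains $s$, and making the strict domination rigorous.
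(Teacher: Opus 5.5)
Your proof is correct. Part (1) follows the paper's argument. For $s\in C_{12}\cap C_{21}$ the paper tests $(1,ne^{i\theta})$ with $n$ large against $Te_1$, and for $s\in C_{11}\cap C_{22}$ it tests $(m_0,e^{i\theta})$ with $m_0$ large against $Te_2$. After rescaling these are exactly your directions $(\varepsilon,1)$ and $(1,\varepsilon)$. The only difference is that the paper writes explicit thresholds for $n$ and $m_0$, while you use a first-order expansion over finitely many indices, which is equally rigorous.

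Part (2) takes a genuinely different, shorter route in the complex case. The paper uses your argument (choose $\theta$ with $\cos(\theta+\phi_i)=-1$) only when $\mathbb{F}=\mathbb{R}$. For $\mathbb{F}=\mathbb{C}$ it splits into cases according to which of $A_s^+,A_s^-,A_s^0$ contains the unique index of $C_{11}\cap C_{21}$. In each case it builds a violating $\theta$ from the signs of $\cos(\phi_i)-\alpha_s\cos(\phi_s)$ and $\sin(\phi_i)-\alpha_s\sin(\phi_s)$. You observe that $\theta\equiv\pi-\phi_j$ is admissible over $\mathbb{C}$, which removes that case analysis and gives one argument for both fields.

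Tidy the write-up at that step, though. The sentence ``this forces $\alpha_s=1$ \dots\ unless $\alpha_s\cos(\theta+\phi_s)\le -1$'' is circular, because the inequality you just derived already is $\alpha_s\cos(\theta+\phi_s)\le -1$. Write the chain directly:
\[
-1\;\geq\;\alpha_s\cos(\theta+\phi_s)\;\geq\;-\alpha_s\;>\;-1 .
\]
This uses $\alpha_s<1$ for $s\in C$. It also uses that $s\in C_{11}\cup C_{21}$, so that $s$ actually appears on the right-hand side of \eqref{eq-06}.
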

\begin{proof}
	
$(1)$  We first show that if $C_{12}\cap C_{21}\neq \emptyset,$ then $$\max\{\cos(\theta+\phi_j):j\in C_{11}\cap C_{21} \}
\geq \max\Big\{\alpha_i\cos(\theta+\phi_i): i\in C_{12}\cap C_{21}\Big\}.$$ 
Assume to the contrary that there exists $i\in C_{12}\cap C_{21}$ such that 
\[\max\{\cos(\theta+\phi_j):j\in C_{11}\cap C_{21} \}<\frac{|t_{i1}|}{k_1}\cos(\theta+\phi_i).\]	
Choose $n_0\in \mathbb{N}$ such that for all $j\in C_{11}\cap C_{21},$
\[\frac{2k_2n_0}{k_1}\Big\{\cos(\theta+\phi_j)-\frac{|t_{i1}|}{k_1}\cos(\theta+\phi_i)\Big\}<\frac{|t_{i1}|^2}{k_1^2}-1.\]
Consequently, for  $n_1\geq n_0,$
\begin{equation}\label{eq-01}
	\begin{aligned}
k_1^2+2k_1k_2n_1\cos(\theta+\phi_j)+k_2^2n_1^2&<|t_{i1}|^2+2	k_2n_1|t_{i1}|\cos(\theta+\phi_i)+k_2^2n_1^2\\
\Rightarrow|k_1+n_1k_2e^{i(\theta+\phi_j)}|&<||t_{i1}|+n_1k_2e^{i(\theta+\phi_i)}|\\
\Rightarrow|t_{j1}+n_1e^{i\theta}t_{j2}|&<|t_{i1}+n_1e^{i\theta}t_{i2}|.
\end{aligned}
\end{equation}
Choose 
$n_2\in \mathbb{N}$ sufficiently large so that $n_2>n_0,$ and for all $j\in C_{11}\cap C_{22},$
\begin{eqnarray*}
k_1^2-|t_{i1}|^2+n_2^2(|t_{j2}|^2-k_2^2)&<& 2n_2\Big(k_2	|t_{i1}|\cos(\theta+\phi_i)-k_1|t_{j2}|\cos(\theta+\phi_j)\Big).
\end{eqnarray*}
Note that such a choice of $n_2$ is possible, since $(|t_{j2}|^2-k_2^2)<0,$ for all $j\in C_{11}\cap C_{22}.$ Therefore, 
\begin{equation}\label{eq-02}
		\begin{split}
	k_1^2+2n_2k_1|t_{j2}|\cos(\theta+\phi_j)+n_2^2|t_{j2}|^2&< |t_{i1}|^2+2n_2k_2	|t_{i1}|\cos(\theta+\phi_i)+n_2^2k_2^2\\
	\Rightarrow |k_1+n_2|t_{j2}|e^{i(\theta+\phi_j)}|&<||t_{i1}|+n_2k_2e^{i(\theta+\phi_i)}|\\
	\Rightarrow|t_{j1}+n_2e^{i\theta}t_{j2}|&<|t_{i1}+n_2e^{i\theta}t_{i2}|.
	\end{split}
\end{equation}
From \eqref{eq-01} and \eqref{eq-02}, we get, for all $j\in C_{11},$
\[|t_{j1}+n_2e^{i\theta}t_{j2}|<|t_{i1}+n_2e^{i\theta}t_{i2}|.\]
Therefore, $T(1,n_2e^{i\theta})$ does not attain its norm at the $j$-th coordinate, where $j\in C_{11}.$  So $\Big(T(1,n_2e^{i\theta}),T(1,0)\Big)$ is not a parallel pair in $\ell_\infty^m,$ whereas, $\Big((1,n_2e^{i\theta}),(1,0)\Big)$ is a parallel pair in $\ell_1^2.$ This contradiction proves that 
\[\max\{\cos(\theta+\phi_j):j\in C_{11}\cap C_{21}\}\\
\geq \max\Big\{\alpha_i\cos(\theta+\phi_i):i\in C_{12}\cap C_{21}\Big\}.\]
Next we show that  if $C_{11}\cap C_{22}\neq \emptyset,$ then $$\max\{\cos(\theta+\phi_j):j\in C_{11}\cap C_{21}\}
\geq \max\Big\{\alpha_l\cos(\theta+\phi_l): l\in C_{11}\cap C_{22}\Big\}.$$ Assume to the contrary that there exists $l\in C_{11}\cap C_{22}$ such that 
\[\max\{\cos(\theta+\phi_j):j\in C_{11}\cap C_{21}\}<\frac{|t_{l2}|}{k_2}\cos(\theta+\phi_l).\]	
Choose $m_0\in \mathbb{N}$ such that for all $j\in C_{11}\cap C_{21},$
\[\frac{2k_1m_0}{k_2}\Big\{\cos(\theta+\phi_j)-\frac{|t_{l2}|}{k_2}\cos(\theta+\phi_l)\Big\}<\frac{|t_{l2}|^2}{k_2^2}-1,\]
and for all $j\in C_{12}\cap C_{21},$
\begin{eqnarray*}
	m_0^2(|t_{j1}|^2-k_1^2)+(k_2^2-|t_{l2}|^2)+2m_0\Big(k_2	|t_{j1}|\cos(\theta+\phi_j)-k_1|t_{l2}|\cos(\theta+\phi_l)\Big)<0.
\end{eqnarray*}
Proceeding as previous, it is easy to check that  for all $j\in C_{21},$
\[|m_0t_{j1}+e^{i\theta}t_{j2}|<|m_0t_{l1}+e^{i\theta}t_{l2}|.\]
Therefore, $T(m_0,e^{i\theta})$ does not attain its norm at the $j$-th coordinate, where $j\in C_{21},$ and so $\Big(T(m_0,e^{i\theta}),T(0,1)\Big)$ is not a parallel pair in $\ell_\infty^m,$ whereas, $\Big((m_0,e^{i\theta}),(0,1)\Big)$ is a parallel pair in $\ell_1^2.$ This contradiction completes the proof.

$(2)$ 
Suppose for a contradiction that  $C_{11}\cap C_{21}= \{i\}.$ \\
For $\mathbb{F}=\mathbb{R},$  choose $\theta\in \{0,\pi\}$ such that $\cos(\theta+\phi_i)=-1.$ For this choice of $\theta,$ \eqref{eq-06} is not satisfied, proving that   $|C_{11}\cap C_{21}|\geq 2.$\\
Suppose $\mathbb{F}=\mathbb{C}.$ Let $s\in(C_{11}\cap C_{22})\cup (C_{12}\cap C_{21}).$ Then at least two of the sets $A_s^+,A_s^-,A_s^0$ are empty. First assume $A_s^+=A_s^-=\emptyset,$ that is, $A_s^0=\{i\}.$ Clearly $\sin(\phi_i)\neq \alpha_s\sin(\phi_s),$ for otherwise, $\alpha_s=1$ contradicting that $s\in C.$ 
Choose $\theta\in(0,2\pi)$ such that $$\sin(\theta)\Big(\sin(\phi_i)-\alpha_s\sin(\phi_s)\Big)>0 .$$ Then  
\begin{eqnarray*}
	\cos(\theta)\Big(\cos(\phi_i)-\alpha_s \cos(\phi_s)\Big)=0&<&\sin(\theta) \Big(\sin(\phi_i)-\alpha_s \sin(\phi_s)\Big)\\
	\Rightarrow \cos(\theta-\phi_i)&<&\alpha_s\cos(\theta-\phi_s), 
\end{eqnarray*} 
which contradicts \eqref{eq-06}.  Now assume $A_s^+=A_s^0=\emptyset.$ Then $A_s^-=\{i\}.$ Choose $\theta\in (0,\pi)$ such that 
\begin{eqnarray*}
	\cos(\theta)&>& \sin(\theta) \frac{\sin(\phi_i)-\alpha_s \sin(\phi_s)}{\cos(\phi_i)-\alpha_s \cos(\phi_s)}\\
	\Rightarrow \cos(\theta-\phi_i)&<&\alpha_s\cos(\theta-\phi_s),
\end{eqnarray*}
which again contradicts \eqref{eq-06}. Similarly $A_s^-=A_s^0=\emptyset$ produces a contradiction. Therefore, at most one of the sets $A_s^+,A_s^-,A_s^0$ is empty, and so $|C_{11}\cap C_{21}|\geq 2.$\\

\end{proof}

Combining Proposition \ref{prop-nec} and Corollary \ref{cor-03}, we immediately get the following characterization.
\begin{cor}\label{cor-com}
	Suppose $C\neq \emptyset,$ and $C_{12}\cap C_{22}=\emptyset.$
	Then  $T$ preserves parallel pairs if and only if the following hold.
	\begin{enumerate}
		\item If $\mathbb{F}=\mathbb{C},$ then for each $\theta\in [0,2\pi),$
		\begin{equation}\label{eq-006}
			\begin{split}
				\max\{\cos(\theta+\phi_j):j\in C_{11}\cap C_{21}\}
				\geq  \max\Big\{\alpha_s\cos(\theta+\phi_s):s\in C\Big\}.	
			\end{split}
		\end{equation}	
	If $\mathbb{F}=\mathbb{R},$ then  there exist $i,j\in C_{11}\cap C_{21}$ such that $\phi_i=0,\phi_j=\pi.$
		\item $|C_{11}\cap C_{21}|\geq 2.$
	\end{enumerate}
\end{cor}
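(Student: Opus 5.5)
Under the hypotheses $C\neq\emptyset$ and $C_{12}\cap C_{22}=\emptyset$, we have $C=(C_{12}\cap C_{21})\cup(C_{11}\cap C_{22})$. So $C\subseteq C_{11}\cup C_{21}$, and since $C\neq\emptyset$, the set $(C_{11}\cap C_{22})\cup(C_{12}\cap C_{21})$ is nonempty. The plan is to derive necessity from Proposition \ref{prop-nec} and sufficiency from Corollary \ref{cor-03}(2), translating each side's conditions into those of the statement. Throughout we use the normalization $t_{j1}\geq 0$, $t_{j2}=|t_{j2}|e^{i\phi_j}$ fixed before Proposition \ref{prop-nec}.

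\emph{Necessity.} Assume $T$ preserves parallel pairs. Proposition \ref{prop-nec}(2) gives condition (2) directly. For $\mathbb{F}=\mathbb{C}$, Proposition \ref{prop-nec}(1) gives \eqref{eq-06} with the maximum on the right taken over $C_{11}\cup C_{21}$. This set contains $C$, so \eqref{eq-006} follows by restricting the right-hand maximum to $C$.

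For $\mathbb{F}=\mathbb{R}$, each $\phi_j\in\{0,\pi\}$. Apply \eqref{eq-06} with $\theta=0$ and $\theta=\pi$, and pick some $s\in C$.
\begin{itemize}
\item If $\phi_s=0$, take $\theta=0$. The right side is at least $\alpha_s\geq0$, so some $j\in C_{11}\cap C_{21}$ has $\cos\phi_j\geq 0$, i.e.\ $\phi_j=0$.
\item The case $\phi_s=\pi$ with $\theta=\pi$ is symmetric and produces some $j$ with $\phi_j=\pi$.
\end{itemize}
To get both signs, note that if $t_{s1}t_{s2}\neq0$ then $\alpha_s>0$, and the strict inequality forces $\cos(\theta+\phi_j)=1$. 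The remaining concern is the other sign. Here I would rerun the argument in the proof of Proposition \ref{prop-nec}(2) for the real case: if all $\phi_j$ with $j\in C_{11}\cap C_{21}$ were equal, say to $\phi$, choose $\theta$ with $\cos(\theta+\phi)=-1$. Then the left side of \eqref{eq-06} is $-1$, while the right side is at least $\alpha_s\cos(\theta+\phi_s)\geq-\alpha_s>-1$, because $\alpha_s<1$ for $s\in C$. This is a contradiction, so both $0$ and $\pi$ occur among the $\phi_j$.

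\emph{Sufficiency.} Condition (2) together with $C\neq\emptyset$ gives $1<|C_{11}\cap C_{21}|<m$. Condition (1) is exactly the extra hypothesis in Corollary \ref{cor-03}(2): \eqref{eq-006} coincides with \eqref{eq-gen} in the complex case, and in the real case the hypothesis is the existence of $i,j$ with $\phi_i=0$ and $\phi_j=\pi$. Corollary \ref{cor-03}(2) then yields that $T$ preserves parallel pairs.

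The main obstacle is the real-case necessity, namely checking that the bound $\alpha_s<1$ for $s\in C$ really produces the strict inequality used above. This holds because for $s\in C$ at least one of $|t_{s1}|<k_1$ or $|t_{s2}|<k_2$, so $\alpha_s=\frac{|t_{s1}t_{s2}|}{k_1k_2}<1$.
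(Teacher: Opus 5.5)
Your proof is correct and follows the paper's route: necessity comes from Proposition \ref{prop-nec}, whose hypothesis you correctly obtain from $C\neq\emptyset$ and $C_{12}\cap C_{22}=\emptyset$, with the right-hand maximum then restricted from $C_{11}\cup C_{21}$ to $C$; sufficiency comes from Corollary \ref{cor-03}(2). Your real-case step (if all $\phi_j$, $j\in C_{11}\cap C_{21}$, were equal, pick $\theta$ with $\cos(\theta+\phi)=-1$ and use $\alpha_s<1$ for $s\in C$) is exactly the detail the paper leaves implicit, mirroring the real case of Proposition \ref{prop-nec}(2), and it suffices on its own, so your preliminary bullet cases are redundant.
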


Now we consider the case  $C_{12}\cap C_{22}\neq \emptyset.$ 
For $\theta,$ where $\theta\in [0,2\pi)$ if $\mathbb{F}=\mathbb{C},$ and $\theta\in \{0,\pi\}$ if $\mathbb{F}=\mathbb{R},$ define 
\begin{eqnarray*}
	A_\theta&:=&\Big\{s\in  C_{11}\cap C_{21}:\cos(\theta+\phi_s)=\max_{j\in C_{11}\cap C_{21}}\cos(\theta+\phi_j)\Big\}\\
	B_{\theta}&:=&\Big\{r\in  C_{12}\cap C_{22}:\cos(\theta+\phi_j)<\alpha_r\cos(\theta+\phi_r), j\in A_\theta \Big\}
\end{eqnarray*}
For $r\in B_\theta,$ define
\begin{eqnarray*}
	&&\beta_{r}=k_1k_2\cos(\theta+\phi_j)-|t_{r1}t_{r2}|\cos(\theta+\phi_r), \\
	&& \zeta_{r}=|t_{r1}|^2-k_1^2,\quad \eta_r=|t_{r2}|^2-k_2^2,\\
	&&\lambda_{r}=\frac{\beta_{r}+\sqrt{\beta_{r}^2-\zeta_r\eta_r}}{\zeta_r},\quad \mu_{r}=\frac{\beta_{r}-\sqrt{\beta_{r}^2-\zeta_r\eta_r}}{\zeta_r}.
\end{eqnarray*}
Observe that, if $\mathbb{F}=\mathbb{R},$ and $B_\theta\neq \emptyset,$ then  $	\max_{j\in C_{11}\cap C_{21}}\cos(\theta+\phi_j)=-1,$ and so  $A_\theta=C_{11}\cap C_{21},$ and $B_\theta=C_{12}\cap C_{22}.$

\begin{prop}\label{prop-04}
	Suppose $T$ preserves parallel pairs. Let $C_{12}\cap C_{22}\neq \emptyset.$ Assume $B_\theta\neq \emptyset,$ where $\theta\in [0,2\pi)$ if $\mathbb{F}=\mathbb{C},$ and $\theta\in \{0,\pi\}$ if $\mathbb{F}=\mathbb{R}.$   Then for all $r\in B_\theta,$ $\beta_{r}^2-\zeta_r\eta_r\leq0.$
\end{prop}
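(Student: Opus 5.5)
We argue by contradiction, using the vectors $v_x=(x,e^{i\theta})\in\ell_1^2$ with $x>0$. Fix $\theta$ as in the statement, $r\in B_\theta$ and $j\in A_\theta$, and suppose $\beta_r^2-\zeta_r\eta_r>0$. The plan is to find $x>0$ such that $Tv_x$ attains its norm at no coordinate of $C_{11}$. Since $(v_x,e_1)$ is a parallel pair in $\ell_1^2$ (both vectors have nonnegative real first coordinate, and $e_1$ has zero second coordinate), $(Tv_x,Te_1)$ would have to be a parallel pair in $\ell_\infty^m$. Because $Te_1$ attains its norm exactly on $C_{11}$, this gives the contradiction.

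\textbf{Step 1: comparing coordinate $r$ with coordinates in $C_{11}\cap C_{21}$.} Recall the normalization $t_{i1}\geq 0$. Expanding the squared moduli gives
\[
|(Tv_x)_r|^2-|(Tv_x)_j|^2=\zeta_r x^2-2\beta_r x+\eta_r .
\]
Here $\zeta_r<0$ and $\eta_r<0$ because $r\in C_{12}\cap C_{22}$. Also $\beta_r<0$ because $r\in B_\theta$. Hence $\zeta_r\eta_r>0$ and $\beta_r/\zeta_r>0$. Under our assumption the quadratic has two distinct real roots $\lambda_r,\mu_r$. Their product is $\eta_r/\zeta_r>0$ and their sum is $2\beta_r/\zeta_r>0$, so both roots are positive. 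The leading coefficient $\zeta_r$ is negative, so for every $x$ strictly between the two roots we get $|(Tv_x)_r|>|(Tv_x)_j|$. Now take any $j'\in C_{11}\cap C_{21}$. Then
\[
|(Tv_x)_{j'}|^2=k_1^2x^2+2k_1k_2x\cos(\theta+\phi_{j'})+k_2^2\leq |(Tv_x)_j|^2,
\]
since $\cos(\theta+\phi_{j'})\leq\cos(\theta+\phi_j)$ by the definition of $A_\theta$. So $(Tv_x)_r$ strictly dominates every coordinate indexed by $C_{11}\cap C_{21}$.

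\textbf{Step 2: coordinates in $C_{11}\cap C_{22}$.} This is the main obstacle: a priori $Tv_x$ could attain its norm at some $s\in C_{11}\cap C_{22}$. We handle it with the argument of the first part of Proposition \ref{prop-nec}(1). That argument applies to each individual $s\in C_{11}\cap C_{22}$; the nonemptiness hypothesis of the proposition is only needed to have such an $s$. It gives
\[
\alpha_s\cos(\theta+\phi_s)\leq\cos(\theta+\phi_j),\quad\text{that is,}\quad |t_{s2}|\cos(\theta+\phi_s)\leq k_2\cos(\theta+\phi_j).
\]
Then
\[
|(Tv_x)_s|^2=k_1^2x^2+2k_1|t_{s2}|x\cos(\theta+\phi_s)+|t_{s2}|^2<k_1^2x^2+2k_1k_2x\cos(\theta+\phi_j)+k_2^2=|(Tv_x)_j|^2 .
\]
The cross terms satisfy $\leq$ by the displayed inequality, and the strict inequality comes from $|t_{s2}|<k_2$. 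Hence $|(Tv_x)_s|<|(Tv_x)_r|$ as well. If $C_{11}\cap C_{22}=\emptyset$, this step is vacuous.

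\textbf{Step 3: conclusion.} Combining Steps 1 and 2, for any $x$ strictly between $\mu_r$ and $\lambda_r$ the vector $Tv_x$ has a coordinate, namely the $r$-th, strictly larger in modulus than every coordinate indexed by $C_{11}$. So $Tv_x$ attains its norm only outside $C_{11}$, while $Te_1$ attains its norm only on $C_{11}$. Thus $(Tv_x,Te_1)$ is not a parallel pair in $\ell_\infty^m$, although $(v_x,e_1)$ is a parallel pair in $\ell_1^2$. This contradicts the assumption that $T$ preserves parallel pairs, and so $\beta_r^2-\zeta_r\eta_r\leq 0$ for all $r\in B_\theta$.

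In the real case $\theta\in\{0,\pi\}$, so $v_x$ has real entries and the same computation applies verbatim.
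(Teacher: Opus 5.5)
Your proof is correct and follows the paper's argument: the same test vector $(x,e^{i\theta})$ with $x$ strictly between the two positive roots $\lambda_r<\mu_r$, the same comparison against $T(1,0)$, and the same appeal to Proposition \ref{prop-nec}(1) to control the remaining coordinates. The differences are minor: you only beat the coordinates in $C_{11}$, which is all that is needed (the paper beats all of $C_{11}\cup C_{21}$), and you state explicitly that Proposition \ref{prop-nec}(1) applies to each index separately, a point the paper uses without comment.
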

\begin{proof}
	Suppose for a contradiction, there exists $r\in B_\theta$ such that  $\beta_{r}^2-\zeta_r\eta_r>0.$ From $\zeta_r<0,$ and $\beta_{r}<0,$ it follows that $0<\lambda_{r}<\mu_{r}.$ Choose $x\in(\lambda_{r},\mu_{r}).$ Then for all $j\in A_\theta,$ 	
	\begin{eqnarray*}
		(x-\lambda_{r})(x-\mu_{r})& <	&0\\
		\Rightarrow  \zeta_rx^2-2x\beta_{r}+\eta_r&>&0\\
		\Rightarrow  |t_{r1}|^2x^2+2x|t_{r1}t_{r2}|\cos(\theta+\phi_r)+|t_{r2}|^2&>&k_1^2x^2+2xk_1k_2\cos(\theta+\phi_j)+k_2^2\\
		\Rightarrow |t_{r1}x+|t_{r2}|e^{i(\theta+\phi_r)}|&>&|k_1x+k_2e^{i(\theta+\phi_j)}|\\
		\Rightarrow |t_{r1}x+t_{r2}e^{i\theta}|&>&|t_{j1}x+t_{j2}e^{i\theta}|.
	\end{eqnarray*}
	On the other hand, from Proposition \ref{prop-nec}, it follows that for all $j\in A_{\theta},$ and for all $i\in C_{11}\cup C_{21},$
		\begin{eqnarray*}
		\cos(\theta+\phi_j)&\geq& \alpha_i\cos(\theta+\phi_s)\\
		\Rightarrow k_1^2x^2+2k_1k_2x	\cos(\theta+\phi_j)+k_2^2&\geq& |t_{i1}x|^2+2|t_{i1}t_{i2}|x\cos(\theta+\phi_i)+|t_{i2}|^2\\
		\Rightarrow |k_1x+k_2e^{i(\theta+\phi_j)}|&\geq& |t_{i1}x+|t_{i2}|e^{i(\theta+\phi_i)}|\\
		\Rightarrow |t_{j1}x+t_{j2}e^{i\theta}|&\geq &|t_{i1}x+t_{i2}e^{i\theta}|.
	\end{eqnarray*}
	Consequently,
	\[|t_{r1}x+t_{r2}e^{i\theta}|> |t_{i1}x+t_{i2}e^{i\theta}|.\]
	Therefore, $T(x,e^{i\theta})$ does not not attain its norm at the $i$-th coordinate, where $i\in C_{11}\cup C_{21}.$ Since $T(1,0)$ attains its norm only on $C_{11},$ $\Big(T(1,0), T(x,e^{i\theta})\Big)$ is not a parallel pair in $\ell_\infty^m,$ whereas $\Big((1,0), (x,e^{i\theta})\Big)$ is a parallel pair in $\ell_1^2.$ This contradiction proves the result.
\end{proof}
In the next two theorems, we characterize the parallel pair preservers of $\mathcal{L}(\ell_1^2,\ell_\infty^m)$ in the most general case. We first consider the complex scalars.
\begin{theorem}\label{th-com}
Suppose $\mathbb{F}=\mathbb{C}.$ Then $T$ preserves parallel pairs if and only if the following holds.
	\begin{enumerate}
		\item If $(C_{11}\cap C_{22})\cup (C_{12}\cap C_{21})\neq \emptyset,$  then $|C_{11}\cap C_{21}|\geq 2,$ and  for each $\theta\in [0,2\pi), $ \eqref{eq-06} holds.
		\item If $C_{12}\cap C_{22}\neq \emptyset,$  then for each $\theta\in [0,2\pi),$ either $B_\theta=\emptyset$ or   $\beta_{r}^2-\zeta_r\eta_r\leq0$ for all $r\in B_{\theta}.$
	\end{enumerate}
\end{theorem}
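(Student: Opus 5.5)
Necessity is immediate from the earlier propositions: if $T$ preserves parallel pairs, condition (1) is Proposition \ref{prop-nec} (both parts), and condition (2) is Proposition \ref{prop-04}. So the work lies in sufficiency. Throughout we keep the normalization used above: $t_{i1}\geq 0$, $t_{i2}=|t_{i2}|e^{i\phi_i}$, and $C_{11}\cap C_{21}\neq\emptyset$. This costs nothing, since if $C_{11}\cap C_{21}=\emptyset$ then $(Te_1,Te_2)$ is not parallel, and if $C=\emptyset$ we are in Corollary \ref{cor-03}(1).

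For sufficiency, take a parallel pair $(x,y)$ in $\ell_1^2$. Up to unimodular scalars and homogeneity, either one of the vectors is a multiple of $e_1$ or $e_2$, or both lie on the ray $\{(a,be^{i\theta}):a,b\geq 0\}$ for one fixed $\theta$. The plan is to show the following. For each $\theta$ there is a single index $s\in A_\theta\subseteq C_{11}\cap C_{21}$ such that $T(a,be^{i\theta})$ attains its norm at the $s$-th coordinate for all $a,b\geq0$. Since $s\in C_{11}\cap C_{21}$, the vectors $Te_1$ and $Te_2$ also attain their norm at $s$. This settles every case at once, including the degenerate pairs with a coordinate vector.

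Fix $\theta$ and $s\in A_\theta$, and write $x=a/b$ (the cases $b=0$ or $a=0$ are trivial since $s\in C_{11}\cap C_{21}$). We must compare
\[
k_1^2x^2+2k_1k_2x\cos(\theta+\phi_s)+k_2^2
\]
with the analogous quadratic for each row $r$. There are three kinds of rows.
\begin{enumerate}[(i)]
\item For $r\in C_{11}\cap C_{21}$, the inequality is exactly the choice of $s\in A_\theta$.
\item For $r\in C_{11}\cup C_{21}$ outside the intersection, the difference of the two quadratics is nonnegative for all $x\geq0$. This is because the coefficient comparisons $k_1^2\geq|t_{r1}|^2$ and $k_2^2\geq |t_{r2}|^2$ hold, and the cross term is controlled by \eqref{eq-06}, which condition (1) guarantees. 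This is precisely the computation in the second display in the proof of Proposition \ref{prop-04}.
\item For $r\in C_{12}\cap C_{22}$, there are two subcases. If $r\notin B_\theta$, then $\cos(\theta+\phi_s)\geq\alpha_r\cos(\theta+\phi_r)$, and all three coefficients again compare favourably. If $r\in B_\theta$, the difference equals $-(\zeta_rx^2-2\beta_rx+\eta_r)$ with $\zeta_r,\eta_r<0$. The hypothesis $\beta_r^2-\zeta_r\eta_r\leq0$ says this quadratic has nonpositive discriminant and negative leading coefficient, hence is $\leq0$ for all real $x$. So row $r$ never beats row $s$.
\end{enumerate}

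The main obstacle is the bookkeeping across the three row types. In particular, one must check that condition (1) is needed, and used, only when $(C_{11}\cap C_{22})\cup(C_{12}\cap C_{21})\neq\emptyset$, and condition (2) only when $C_{12}\cap C_{22}\neq\emptyset$. One must also verify that a single $s$ works simultaneously for all $x\geq0$, rather than an $s$ depending on $x$. This uniformity is what makes the parallel-pair conclusion go through. It holds here because $s$ is chosen from $\theta$ alone and each comparison above is valid for every $x\geq0$.
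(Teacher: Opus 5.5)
Your proposal is correct and follows essentially the same route as the paper: for fixed $\theta$ you choose one index in $A_\theta$ and show it dominates every row for all ratios $a/b\geq 0$. The rows of $C_{11}\cup C_{21}$ are handled by \eqref{eq-06}, those of $(C_{12}\cap C_{22})\setminus B_\theta$ by the definition of $B_\theta$, and those of $B_\theta$ by the nonpositive discriminant together with the negative leading coefficient $\zeta_r$. The paper only organizes this as the two cases $B_\theta=\emptyset$ and $B_\theta\neq\emptyset$, and it rotates the second vector of the pair by $\lambda$ rather than placing both vectors on one ray.

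One small slip: in the sufficiency direction you cannot justify $C_{11}\cap C_{21}\neq\emptyset$ by the parallelism of $(Te_1,Te_2)$, since that parallelism is what you are trying to prove. Instead, note that $C_{11}\neq\emptyset$; if $C_{11}\cap C_{21}$ were empty, then $C_{11}\subseteq C_{11}\cap C_{22}$ and condition (1) would force $|C_{11}\cap C_{21}|\geq 2$. The paper simply treats this nonemptiness as a standing assumption.
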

\begin{proof}
	The necessary part follows from  Propositions \ref{prop-nec}  and  \ref{prop-04}. 	For the sufficient part, suppose $\Big((\alpha,\beta),(\gamma,\delta)\Big)$ is a parallel pair in $\ell_1^2.$ If $\alpha\beta=\gamma\delta=0,$ then $(\alpha,\beta),(\gamma,\delta)\in \{\mu e_1,\mu e_2:\mu\in \mathbb{T}\},$ and so  $T(\alpha,\beta),T(\gamma,\delta)\in  \{\mu Te_1,\mu Te_2:\mu\in \mathbb{T}\}.$ Since both $Te_1,Te_2$ attain their norm on $C_{11}\cap C_{21}$, $\Big(Te_1,Te_2\Big)$ is a parallel pair in $\ell_\infty^m,$ and so is $\Big(T(\alpha,\beta),T(\gamma,\delta)\Big).$ Suppose $\alpha\beta\neq 0.$ Since parallel pairs satisfy homogeneity, without loss of generality, we may assume that $\alpha> 0.$ Suppose $\beta=|\beta|e^{i\theta}.$ 
	So there exists $\lambda\in \mathbb{T}$ such that 
	 $\lambda\gamma=|\gamma|,$ and $\lambda e^{-i\theta}\delta=|\delta|.$ \\ 
	  
	 If $B_\theta=\emptyset,$ then \eqref{eq-gen} holds. For all $s\in C,$ and $j\in A_\theta,$ from $$	\cos(\theta+\phi_j)\geq \alpha_s\cos(\theta+\phi_s),$$ we get
	 \begin{equation}\label{eq-n0} 
	 |t_{j1}\alpha+t_{j2}\beta|\geq |t_{s1}\alpha+t_{s2}\beta| \quad \text{and } \quad 	|t_{j1}\gamma+t_{j2}\delta|\geq |t_{s1}\gamma+t_{s2}\delta |.
	 \end{equation}
	 Therefore both $T(\alpha,\beta),$ and $ T(\gamma,\delta)$ attain their norm at the $j$-th coordinate. So $\Big(T(\alpha,\beta),T(\gamma,\delta)\Big)$ is a parallel pair in $\ell_\infty^m.$ \\
	 
	 Suppose $B_\theta\neq \emptyset.$ Since for  $r\in (C_{12}\cap C_{22})\setminus B_\theta,$ 
	  $$\cos(\theta+\phi_j)\geq\alpha_r\cos(\theta+\phi_r),$$ using  \eqref{eq-06}, we have
	\[\max\{\cos(\theta+\phi_i):i\in C_{11}\cap C_{21}\}\geq \max\{\alpha_s\cos(\theta+\phi_s):s\in C_{11}\cup C_{21}\cup B_\theta^c\}.\] 
	For all $s\in C_{11}\cup C_{21}\cup B_\theta^c,$ and $j\in A_\theta,$  $$	\cos(\theta+\phi_j)\geq \alpha_s\cos(\theta+\phi_s),$$ implies \eqref{eq-n0}.
	On the other hand, if $r\in B_{\theta},$ then $\beta_{r}^2-\zeta_r\eta_r\leq0.$ So for all $x>0,$ and $j\in A_\theta$
	\begin{eqnarray*}
		\zeta_rx^2-2x\beta_{r}+\eta_r&\leq &0\\
		\Rightarrow |t_{r1}x+t_{r2}e^{i\theta}|&\leq&|t_{j1}x+t_{j2}e^{i\theta}|.
	\end{eqnarray*}
	In particular, considering $x=\frac{\alpha}{|\beta|},$ we have 
	\[ |t_{r1}\alpha+t_{r2}\beta|\leq|t_{j1}\alpha+t_{j2}\beta|.\]
	Thus $T(\alpha,\beta)$ attains its norm at the $j$-th coordinate. If $\gamma\delta=0,$ then clearly $T(\gamma,\delta)$ attains its norm at the $j$-th coordinate. If $\gamma\delta\neq 0,$ then considering $x=\frac{|\gamma|}{|\delta|},$ we have  
	\begin{eqnarray*}
		|t_{r1}|\gamma|+t_{r2}|\delta|e^{i\theta}|&\leq&|t_{j1}|\gamma|+t_{j2}|\delta|e^{i\theta}|\\
		\Rightarrow |t_{r1}\gamma+t_{r2}\delta|&\leq&|t_{j1}\gamma+t_{j2}\delta|,
	\end{eqnarray*}
	which implies that $T(\gamma,\delta)$ attains its norm at the $j$-th coordinate. 
	Thus in each case, $\Big(T(\alpha,\beta),T(\gamma,\delta)\Big)$ is a parallel pair in $\ell_\infty^m.$ 
\end{proof}
The following theorem characterizes the parallel pair preservers of $\mathcal{L}(\ell_1^2,\ell_\infty^m)$ for real scalars.
\begin{theorem}\label{th-comr}
	Suppose $\mathbb{F}=\mathbb{R}.$ Then $T$ preserves parallel pairs if and only if either $\phi_i=0,\phi_j=\pi$ for some $i,j\in C_{11}\cap C_{21}$ or the following holds.
	\begin{enumerate}
		\item $\phi_i=\phi_j$ for all  $i,j\in C_{11}\cap C_{21}.$
		\item  $(C_{11}\cap C_{22})\cup (C_{12}\cap C_{21})= \emptyset.$  
		\item For $r\in C_{12}\cap C_{22},$  either $t_{r1}=t_{r2}=0$ or $|t_{r2}|k_1=t_{r1}k_2$ and $\phi_r=\phi_i,$ $i\in  C_{11}\cap C_{21}.$
	\end{enumerate}
\end{theorem}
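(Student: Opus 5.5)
Over $\mathbb{R}$ we have $\theta\in\{0,\pi\}$ and each $\phi_r\in\{0,\pi\}$, since $t_{r2}$ is real. So every cosine in play is $\pm1$ or $0$; the last case only occurs when $t_{r2}=0$, and then $\alpha_r=0$. The plan is to specialise Theorem \ref{th-com}'s two-sided analysis (Propositions \ref{prop-nec}, \ref{prop-04}, Corollary \ref{cor-03}) to this discrete situation.

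\textbf{Sufficiency.} If $\phi_i=0$ and $\phi_j=\pi$ for some $i,j\in C_{11}\cap C_{21}$, Corollary \ref{cor-03} applies. If $|C_{11}\cap C_{21}|=m$, use part (1); otherwise use part (2), whose real-field hypothesis is exactly this. Now suppose (1)--(3) hold. By (2), $C=C_{12}\cap C_{22}$. Let $\phi$ be the common value of $\phi_i$ on $C_{11}\cap C_{21}$. Write a parallel pair as $(x_1,x_2)$ and $(y_1,y_2)$ with a common sign pattern, so $\theta\in\{0,\pi\}$ is common to both. For $j\in C_{11}\cap C_{21}$,
\[|Tx|_j=|k_1|x_1|+k_2|x_2|e^{i(\theta+\phi)}|.\]
For $r\in C$ with $t_{r1}=t_{r2}=0$ the coordinate vanishes. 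For the other $r\in C$, condition (3) gives $(t_{r1},t_{r2})=c(k_1,k_2e^{i\phi})$ with $0<c<1$, so $|Tx|_r=c\,|Tx|_j\le |Tx|_j$. Hence $Tx$ and $Ty$ both attain their norms at any $j\in C_{11}\cap C_{21}$, which is nonempty because it contains the rows where $|t_{r1}|=k_1$. Therefore $(Tx,Ty)$ is parallel.

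\textbf{Necessity.} Assume $T$ preserves parallel pairs and that no $i,j\in C_{11}\cap C_{21}$ have $\phi_i=0,\phi_j=\pi$; this is condition (1).
\begin{itemize}
\item For (2), argue by contradiction. If $(C_{11}\cap C_{22})\cup(C_{12}\cap C_{21})\ne\emptyset$, Proposition \ref{prop-nec}(1) requires \eqref{eq-06} for $\theta$ with $\cos(\theta+\phi)=-1$. The left side of \eqref{eq-06} is then $-1$. Each $s\in C_{11}\cup C_{21}$ has $\alpha_s\cos(\theta+\phi_s)\ge-\alpha_s$, and $\alpha_s<1$ for $s\in C$. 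So the right side, taken over $s\in C\cap(C_{11}\cup C_{21})\neq\emptyset$, exceeds $-1$, a contradiction.
\item For (3), take $r\in C_{12}\cap C_{22}$, not both entries zero. Apply Proposition \ref{prop-04} with the same $\theta$, where $A_\theta=C_{11}\cap C_{21}$ and $\cos(\theta+\phi_j)=-1$. We have $r\in B_\theta$ whenever $\alpha_r\cos(\theta+\phi_r)>-1$. This always holds if $\alpha_r<1$, and $\alpha_r<1$ automatically for $r\in C_{12}\cap C_{22}$ unless an entry is zero, in which case $\alpha_r=0>-1$ anyway. So $\beta_r^2\le\zeta_r\eta_r$ with $\beta_r=-k_1k_2-|t_{r1}t_{r2}|\cos(\theta+\phi_r)$, $\zeta_r=|t_{r1}|^2-k_1^2$ and $\eta_r=|t_{r2}|^2-k_2^2$.
\item Set $a=|t_{r1}|/k_1$ and $b=|t_{r2}|/k_2$, both in $[0,1)$, and $\epsilon=\cos(\theta+\phi_r)=\pm1$. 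The inequality becomes $(1+\epsilon ab)^2\le(1-a^2)(1-b^2)$, which expands to $(a+\epsilon b)^2\le0$. Thus $a=-\epsilon b$, so $a=b$ and $\epsilon=-1$ when $a>0$, i.e.\ $\phi_r=\phi$. If $a=0$ then $b=0$, so no case with exactly one zero entry survives. With $t_{r1}\ge0$ this gives $|t_{r2}|k_1=t_{r1}k_2$ and $\phi_r=\phi_i$, which is (3).
\end{itemize}

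The main obstacle is the case bookkeeping. One must check that $r$ really lies in $B_\theta$ in the degenerate cases, and that a single fixed $\theta$ both breaks \eqref{eq-06} and forces the quadratic identity. Reducing $\beta_r^2\le\zeta_r\eta_r$ to a perfect square $(a+\epsilon b)^2\le0$ is the key computation and should be checked carefully.
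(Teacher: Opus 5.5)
Your proof is correct. Its necessity half is the paper's argument. With all $\phi_i$ equal on $C_{11}\cap C_{21}$, you choose $\theta$ with $\cos(\theta+\phi)=-1$, obtain (2) from \eqref{eq-06}, and obtain (3) from Proposition \ref{prop-04}. Your reduction of $\beta_r^2\leq\zeta_r\eta_r$ to $(a+\epsilon b)^2\leq 0$ supplies the computation the paper only asserts (``which is equivalent to (3)''). The first alternative of the sufficiency is also handled as in the paper, through Corollary \ref{cor-03}.

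The genuine difference is the sufficiency of (1)--(3). The paper fixes $\theta\in\{0,\pi\}$ and reruns the machinery of Theorem \ref{th-com}. It uses \eqref{eq-gen} when $\cos(\theta+\phi)=1$, and otherwise the discriminant condition $\beta_r^2\leq\zeta_r\eta_r$ for all $r\in C_{12}\cap C_{22}$. This keeps the real case formally parallel to the complex one. You instead note that (1)--(3) make every row of $T$ equal to $c_r(k_1,k_2e^{i\phi})$, with $0\leq c_r\leq 1$ and $c_r=1$ on $C_{11}\cap C_{21}$. That set is nonempty because (2) forces $C_{11}\subseteq C_{21}$. Hence every vector in the range of $T$ attains its norm at any index of $C_{11}\cap C_{21}$.

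Your route is shorter and needs no case analysis in $\theta$; your common-sign-pattern step is in fact unnecessary. It also exposes something the paper's route hides: (1)--(3) force $rank(T)\leq 1$. So the theorem really says that a real $T$ preserves parallel pairs if and only if $rank(T)\leq 1$, or, after normalisation, the two rows $(k_1,k_2)$ and $(k_1,-k_2)$ both occur.
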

\begin{proof}
\textbf{Necessary part:} Suppose $(1)$ holds. 
Choose $\theta\in \{0,\pi\}$ such that $$\max\{\cos(\theta+\phi_j):j\in C_{11}\cap C_{21}\}=-1.$$ Then $(2)$ follows from \eqref{eq-06}. Moreover,  $r\in C_{12}\cap C_{22}$ implies that $r\in B_\theta.$ By Proposition \ref{prop-04}, $\beta_r^2-\zeta_r\eta_r\leq 0,$ which is equivalent to $(3).$\\
	\textbf{Sufficient part:} If $\phi_i=0,\phi_j=\pi$ for some $i,j\in C_{11}\cap C_{21},$ then by Corollary \ref{cor-03}, $T$ preserves parallel pairs. On the other hand, assume $(1)-(3).$  Let $\theta\in \{0,\pi\}.$ If $\cos(\theta+\phi_i)=1,$ for all $i\in C_{11}\cap C_{21},$ then \eqref{eq-gen} holds, and so $T$ preserves parallel pairs. Otherwise for this $\theta,$ by $(3)$  $\beta_r^2-\zeta_r\eta_r\leq 0$ for all $r\in C_{12}\cap C_{22}.$ Now, proceeding similarly as  Theorem \ref{th-com}, we get that $T$ preserves parallel pairs.
\end{proof}

In Theorem \ref{th-comr}, the characterization of parallel pair preservers $T\in \mathcal{L}(\ell_1^2,\ell_\infty^m)$ for real scalars involves only the entries of the matrix $T.$ Whereas using Theorem \ref{th-com} or Corollary \ref{cor-03}, it is difficult to construct parallel pair preserving maps in $\mathcal{L}(\ell_1^2,\ell_\infty^m)$ for complex scalars.
Therefore, our next goal is to obtain an equivalent condition for \eqref{eq-gen} involving only the entries of the matrix of $T$ for complex scalars. For this purpose, we first prove the following lemma. 

\begin{lemma}\label{lem-gen}
	
	Let $\mathbb{F}=\mathbb{C}.$ Suppose   \eqref{eq-gen} holds. Then  for each  $s\in C,$ at least one  of the following holds.
	\begin{enumerate}
		\item The sets	$A_s^+$ and  $A_s^-$ are non-empty.
		\item   $|A_s^0|\geq 2,$ and there exist distinct $i,j\in  A_s^0$ such that $$\Big(\sin(\phi_i)-\alpha_s\sin(\phi_s)\Big)\Big(\sin(\phi_j)-\alpha_s\sin(\phi_s)\Big)<0.$$
	\end{enumerate}
\end{lemma}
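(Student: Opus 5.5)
Fix $s\in C$ and consider, for $j\in C_{11}\cap C_{21}$, the functions
\[
g_j(\theta)=\cos(\theta+\phi_j)-\alpha_s\cos(\theta+\phi_s)=a_j\cos\theta-b_j\sin\theta,
\]
where $a_j=\cos(\phi_j)-\alpha_s\cos(\phi_s)$ and $b_j=\sin(\phi_j)-\alpha_s\sin(\phi_s)$. Condition \eqref{eq-gen} gives, for every $\theta\in[0,2\pi)$,
\[
\max_{j}\cos(\theta+\phi_j)\geq \alpha_s\cos(\theta+\phi_s),
\]
so $\max_j g_j(\theta)\geq 0$ for all $\theta$. The sets $A_s^{+},A_s^{-},A_s^{0}$ are exactly those $j$ with $a_j>0$, $a_j<0$, $a_j=0$. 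Note that $(a_j,b_j)\neq(0,0)$ for every $j$: otherwise $e^{i\phi_j}=\alpha_s e^{i\phi_s}$, forcing $\alpha_s=1$, which contradicts $s\in C$ (as already observed in Proposition \ref{prop-nec}). Hence each $g_j$ is a nonzero sinusoid, negative on an open arc of length $\pi$.

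I will argue by contradiction, assuming neither (1) nor (2) holds. Then either $A_s^{+}=\emptyset$ or $A_s^{-}=\emptyset$; by symmetry (replacing $\theta$ by $\theta+\pi$ flips all signs of $a_j,b_j$) assume $A_s^{-}=\emptyset$, so $a_j\geq 0$ for all $j$. For $j\in A_s^{+}$, $g_j(\pi)=-a_j<0$. For $j\in A_s^{0}$, $g_j(\theta)=-b_j\sin\theta$. The failure of (2) means that all $b_j$ with $j\in A_s^0$ have a common strict sign (they are nonzero by the observation above), or $|A_s^0|\leq 1$, which is a special case of this. Say every such $b_j>0$; the other case is handled by reflecting $\theta\mapsto-\theta$.

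The key step is to choose $\theta$ slightly greater than $\pi$ or slightly less than $\pi$, on the side where $\sin\theta$ has the right sign, so that all $g_j$ are negative simultaneously. Concretely, for $b_j>0$ we need $-b_j\sin\theta<0$, i.e.\ $\sin\theta>0$, so take $\theta=\pi-\epsilon$. For $j\in A_s^+$, continuity gives $g_j(\pi-\epsilon)<0$ for small $\epsilon$, and there are finitely many $j$. For $j\in A_s^0$, $g_j(\pi-\epsilon)=-b_j\sin\epsilon<0$. If $A_s^0=\emptyset$, then $\theta=\pi$ itself works.

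Thus $\max_j g_j(\theta)<0$, contradicting \eqref{eq-gen}. The only subtle point is the finiteness and uniform choice of $\epsilon$, together with the nondegeneracy $(a_j,b_j)\neq(0,0)$ coming from $\alpha_s<1$. The latter holds since $s\in C$ means $|t_{s1}|<k_1$ or $|t_{s2}|<k_2$.
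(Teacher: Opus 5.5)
Your proof is correct and follows essentially the paper's approach: argue by contradiction and exhibit a $\theta$ at which $\cos(\theta+\phi_j)-\alpha_s\cos(\theta+\phi_s)<0$ for every $j\in C_{11}\cap C_{21}$, with the side of the chosen $\theta$ dictated by the common sign of $\sin(\phi_j)-\alpha_s\sin(\phi_s)$ on $A_s^0$. The difference is only presentational: your symmetries $\theta\mapsto\theta+\pi$ and $\theta\mapsto-\theta$, together with a continuity/finiteness choice of $\epsilon$, replace the paper's separate Cases a, b1, b2 and its explicit choice of $\theta$ via $\delta_1,\delta_2$ with a single argument.
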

\begin{proof}
	Since  \eqref{eq-gen} holds, by Corollary \ref{cor-03}, $T$ preserves parallel pairs. 
	Suppose for some $s\in  C,$ either $A_s^+=\emptyset,$ or $A_s^-=\emptyset.$	Without loss of generality, assume $A_s^+=\emptyset.$  We consider the cases $A_s^-=\emptyset,$ and $A_s^-\neq \emptyset$ separately. \\
	
	\textbf{Case a:} Let  $A_s^-=\emptyset.$ Then $C_{11}\cap C_{21}=A_s^0.$ 
	 If $j\in A_s^0,$ then $\sin(\phi_j)\neq \alpha_s\sin(\phi_s),$ for otherwise, $\alpha_s=1$ contradicting that $s\in C.$ Assume to the contrary that 
	for all distinct $i,j\in  A_s^0,$  $$\Big(\sin(\phi_i)-\alpha_s\sin(\phi_s)\Big)\Big(\sin(\phi_j)-\alpha_s\sin(\phi_s)\Big)>0,$$ that is, for all $i,j\in A_s^0,$
	\[sgn\Big(\sin(\phi_i)-\alpha_s\sin(\phi_s)\Big)=sgn\Big(\sin(\phi_j)-\alpha_s\sin(\phi_s)\Big).\]
	Choose $\theta\in(0,2\pi)$ such that $$\sin(\theta)\Big(\sin(\phi_i)-\alpha_s\sin(\phi_s)\Big)>0 ~\text{ for all } i\in  A_s^0.$$ Proceeding as the proof of $(2)$ of Proposition \ref{prop-nec}, we get 
	\[ \max\{ \cos(\theta-\phi_i):i\in A_s^0\}< \max \{\alpha_s\cos(\theta-\phi_s):s\in C\},\]
	which contradicts \eqref{eq-gen}. Therefore $(2)$ holds.\\
	
	\textbf{Case b:} Let  $A_s^-\neq\emptyset.$ We first show that $|A_s^0|\geq 2.$   Assume
	\begin{eqnarray*}
		\delta_1&=&\min\Big\{\frac{\sin(\phi_i)-\alpha_s \sin(\phi_s)}{\cos(\phi_i)-\alpha_s \cos(\phi_s)}:i\in A_s^-\Big\}, \\
		\delta_2&=&\max\Big\{\frac{\sin(\phi_i)-\alpha_s \sin(\phi_s)}{\cos(\phi_i)-\alpha_s \cos(\phi_s)}:i\in A_s^-\Big\}.
	\end{eqnarray*}
	Suppose for a contradiction that $|A_s^0|\leq 1.$ We consider the cases $A_s^0=\emptyset$ and  $|A_s^0|= 1$ separately. \\
	
	\textbf{Case b1:} Assume that $A_s^0=\emptyset,$ that is, $C_{11}\cap C_{21}=A_s^-.$ Choose $\theta\in (0,\pi)$ such that $\cos(\theta)>\delta_2 \sin(\theta).$ Then for all $i\in C_{11}\cap C_{21},$
	\begin{eqnarray*}
		\cos(\theta)&>&\delta_2 \sin(\theta)\\
		\Rightarrow \cos(\theta)&>& \sin(\theta) \frac{\sin(\phi_i)-\alpha_s \sin(\phi_s)}{\cos(\phi_i)-\alpha_s \cos(\phi_s)}\\
		\Rightarrow \cos(\theta-\phi_i)&<&\alpha_s\cos(\theta-\phi_s),
	\end{eqnarray*}
	which contradicts \eqref{eq-gen}.\\
	
	\textbf{Case b2:} Assume $A_s^0=\{j\},$ that is, $C_{11}\cap C_{21}=A_s^-\cup \{j\}.$ Observe that  $\sin(\phi_j)\neq \alpha_s\sin(\phi_s),$ for otherwise, $\alpha_s=1,$ contradicting that $s\in C.$\\
	
	If $\sin(\phi_j)- \alpha_s\sin(\phi_s)>0,$ choose $\theta\in (0,\pi)$ such that $\cos(\theta)>\delta_2 \sin(\theta).$ Then as before, for all $i\in A_s^-,$ we have, $\cos(\theta-\phi_i)<\alpha_s\cos(\theta-\phi_s).$ Moreover, $j\in A_s^0$ implies that 
	\[ \cos(\theta)\Big(\cos(\phi_j)-\alpha_s \cos(\phi_s)\Big)=0<\sin(\theta) \Big(\sin(\phi_j)-\alpha_s \sin(\phi_s)\Big),\]
	that is, $\cos(\theta-\phi_j)<\alpha_s\cos(\theta-\phi_s),$ 
	which contradicts \eqref{eq-gen}.\\
	
	If $\sin(\phi_j)- \alpha_s\sin(\phi_s)<0,$ choose $\theta\in (\pi,2\pi)$ such that $\cos(\theta)>\delta_1 \sin(\theta).$  Then for all $i\in A_{s}^-,$
	\begin{eqnarray*}
		\cos(\theta)&>&\delta_1 \sin(\theta)\\
		\Rightarrow \cos(\theta)&>& \sin(\theta) \frac{\sin(\phi_i)-\alpha_s \sin(\phi_s)}{\cos(\phi_i)-\alpha_s \cos(\phi_s)}\\
		\Rightarrow \cos(\theta-\phi_i)&<&\alpha_s\cos(\theta-\phi_s).
	\end{eqnarray*}
	On the other hand,  $j\in A_s^0$ implies that 
	\[ \cos(\theta)\Big(\cos(\phi_j)-\alpha_s\cos(\phi_s)\Big)=0<\sin(\theta) \Big(\sin(\phi_j)-\alpha_s \sin(\phi_s)\Big),\]
	that is, $\cos(\theta-\phi_j)<\alpha_s\cos(\theta-\phi_s).$ Therefore, 
	\[\max\{\cos(\theta-\phi_i):i\in C_{11}\cap C_{21}\}<\max\{\alpha_s\cos(\theta-\phi_s):s\in C\},\]
	which contradicts \eqref{eq-gen}.\\
	Thus from Case b1 and Case b2, it follows that $|A_s^0|\geq 2.$ Now assume for all $i,j\in A_s^0,$
	$$\Big(\sin(\phi_i)-\alpha_s\sin(\phi_s)\Big)\Big(\sin(\phi_j)-\alpha_s\sin(\phi_s)\Big)>0.$$ Without loss of generality, assume that $\sin(\phi_i)-\alpha_s\sin(\phi_s)>0$ for all $i\in A_s^0.$ Choose $\theta\in (0,\pi)$ such that $\cos(\theta)>\delta_2 \sin(\theta).$ Then as in Case b1, 
	\[\cos(\theta-\phi_i)<\alpha_s\cos(\theta-\phi_s), \text{ for all } i\in A_s^-. \]
	Moreover, for all $i\in A_s^0,$
	\[\sin(\theta)\Big(\sin(\phi_i)-\alpha_s\sin(\phi_s)\Big)>0 =\cos(\theta)\Big(\cos(\phi_i)-\alpha_s\sin(\phi_s)\Big),\]
	that is, $\cos(\theta-\phi_i)<\alpha_s\cos(\theta-\phi_s).$ Thus
	\[\max\{\cos(\theta-\phi_i):i\in C_{11}\cap C_{21}\}<\max\{\alpha_s\cos(\theta-\phi_s): s\in C\},\]
	which contradicts \eqref{eq-gen}. Thus $(2)$ must be true.
\end{proof}

Next we obtain an equivalent condition for \eqref{eq-gen} involving only the entries of the matrix of $T$.
\begin{prop}\label{prop-gen}
Let $\mathbb{F}=\mathbb{C}.$	Then for each $\theta \in [0,2\pi),$ the inequality \eqref{eq-gen}
		holds if and only if for each $s\in C,$ either of the following holds.
		\begin{enumerate}
			\item At least one of the sets $A_s^+,A_s^-$ is empty. Moreover, $|A_s^0|\geq 2$ and there exist distinct $i,j\in  A_s^0$ such that  $$\Big(\sin(\phi_i)-\alpha_s\sin(\phi_s)\Big)\Big(\sin(\phi_j)-\alpha_s\sin(\phi_s)\Big)<0.$$
			\item Both the sets $A_s^+,A_s^-$ are non-empty. Moreover, if 
			\begin{eqnarray*}
				\gamma_{min}&=&\min\Big\{\frac{\sin(\phi_i)-\alpha_s \sin(\phi_s)}{\cos(\phi_i)-\alpha_s \cos(\phi_s)}:i\in A_s^-\Big\},	\\
				\gamma_{max}&=&\max\Big\{\frac{\sin(\phi_i)-\alpha_s \sin(\phi_s)}{\cos(\phi_i)-\alpha_s \cos(\phi_s)}:i\in A_s^+\Big\},
			\end{eqnarray*}	
			then exactly one of the following holds.
			\begin{enumerate}
				\item $\gamma_{min}=\gamma_{max}.$
				\item $\gamma_{min}>\gamma_{max},$ and there exists $i\in A_s^0$ such that $\sin(\phi_i)>\alpha_s\sin(\phi_s).$
				\item $\gamma_{min}<\gamma_{max},$ and there exists $i\in A_s^0$ such that $\sin(\phi_i)<\alpha_s\sin(\phi_s).$
			\end{enumerate}
			In particular, if $A_s^0=\emptyset,$ then $\gamma_{min}=\gamma_{max}.$
		\end{enumerate}
	\end{prop}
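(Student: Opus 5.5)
The plan is to reduce \eqref{eq-gen}, for each $s\in C$ separately, to a statement about whether the origin lies in the convex hull of finitely many planar vectors. Then I read off the case distinctions in the proposition from an elementary separating-line analysis.

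\textbf{Reduction.} Taking the maximum over $s\in C$ on the right of \eqref{eq-gen} shows that the inequality holds for all $\theta$ if and only if, for every $s\in C$ and every $\theta$, there is $j\in C_{11}\cap C_{21}$ with $\cos(\theta+\phi_j)\ge \alpha_s\cos(\theta+\phi_s)$. Fix $s\in C$ and, for $j\in C_{11}\cap C_{21}$, put
$$a_j=\cos(\phi_j)-\alpha_s\cos(\phi_s),\qquad b_j=\sin(\phi_j)-\alpha_s\sin(\phi_s),\qquad v_j=(a_j,b_j).$$
Then $\cos(\theta+\phi_j)-\alpha_s\cos(\theta+\phi_s)=\langle u_\theta,v_j\rangle$ with $u_\theta=(\cos\theta,-\sin\theta)$, and $u_\theta$ runs over the whole unit circle. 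So the condition for this $s$ says that no open half-plane $\{w:\langle u,w\rangle<0\}$ contains all the $v_j$. Equivalently, $0\in \operatorname{conv}\{v_j\}$.

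Each $v_j\neq 0$: $v_j=0$ would mean $e^{i\phi_j}=\alpha_s e^{i\phi_s}$, forcing $\alpha_s=1$, which is impossible because $s\in C$ gives $\alpha_s<1$. This is the same observation used in Lemma \ref{lem-gen}. The sets $A_s^+,A_s^-,A_s^0$ are exactly the indices with $a_j>0$, $a_j<0$, $a_j=0$. For $j\in A_s^\pm$, the quotient $b_j/a_j$ is the slope of the line through $0$ and $v_j$.

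\textbf{Case analysis.} Suppose first that one of $A_s^\pm$ is empty, say all $a_j\le 0$. A convex combination giving $0$ cannot put positive weight on any $j$ with $a_j<0$. So $0\in\operatorname{conv}\{v_j\}$ if and only if there are $i,j\in A_s^0$ with $b_ib_j<0$, which is condition (1). The "only if" direction is also the content of Lemma \ref{lem-gen}, Case a and Case b, which I can cite.

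Next suppose both $A_s^+$ and $A_s^-$ are nonempty. No vertical half-plane can contain all the $v_j$, so $0\notin\operatorname{conv}\{v_j\}$ if and only if some $k\in\mathbb R$ makes all $v_j$ lie strictly in $\{y>kx\}$ or all lie strictly in $\{y<kx\}$. For $\{y>kx\}$ this requires:
\begin{itemize}
\item slope $>k$ for every $j\in A_s^+$;
\item slope $<k$ for every $j\in A_s^-$, since dividing by $a_j<0$ flips the inequality;
\item $b_j>0$ for every $j\in A_s^0$.
\end{itemize}
The half-plane $\{y<kx\}$ gives the symmetric conditions. Such a $k$ exists precisely when the slope ranges of $A_s^+$ and $A_s^-$ are strictly separated in the appropriate order. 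Negating this separation gives the trichotomy:
\begin{itemize}
\item equal extremal slopes, case (a): some $v_i\in A_s^+$ and some $v_j\in A_s^-$ lie on a common line through $0$ on opposite rays, so $0$ is in the hull;
\item overlap of the slope ranges in one order: this kills one of the two half-plane families, and the other is killed exactly when $A_s^0$ contains a point with the required sign of $b$, giving (b) or (c);
\item when $A_s^0=\emptyset$, only the equality case survives.
\end{itemize}
Finally, I will check that these alternatives are mutually exclusive and match the paper's $\gamma_{min},\gamma_{max}$ notation.

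\textbf{Main obstacle.} The delicate step is the last one: matching the orientation and sign conventions in the definitions of $\gamma_{min}$ (over $A_s^-$) and $\gamma_{max}$ (over $A_s^+$) and in the sign conditions $\sin(\phi_i)\gtrless\alpha_s\sin(\phi_s)$ of (b) and (c) with the half-plane picture. The paper's computations in Lemma \ref{lem-gen} are written with $\cos(\theta-\phi_i)$, i.e.\ $\theta\mapsto-\theta$, which reflects the slopes. I will first redo the half-plane computation with that convention, choosing $\theta\in(0,\pi)$ or $(\pi,2\pi)$ as in Case b1 and Case b2, and only then identify which strict inequality between $\gamma_{min}$ and $\gamma_{max}$ pairs with which sign on $A_s^0$.
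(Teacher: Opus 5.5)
\textbf{The step that fails.} Your reduction is correct. It is in fact sharper than the paper's argument: since $v_j=e^{i\phi_j}-\alpha_se^{i\phi_s}$, the condition for a fixed $s$ is exactly $\alpha_se^{i\phi_s}\in\mathrm{conv}\{e^{i\phi_j}:j\in C_{11}\cap C_{21}\}$. Your half-plane bookkeeping is also right. Write $m_j=b_j/a_j$ for $j\in A_s^+\cup A_s^-$. Then $\{y>kx\}$ contains every $v_j$ iff $\max_{A_s^-}m_j<k<\min_{A_s^+}m_j$ and $b_j>0$ on $A_s^0$. Likewise $\{y<kx\}$ contains every $v_j$ iff $\max_{A_s^+}m_j<k<\min_{A_s^-}m_j$ and $b_j<0$ on $A_s^0$.

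What fails is the next step: ``negating this separation gives the trichotomy'', and in particular your claim that an overlap of the slope ranges ``kills one of the two half-plane families''. If the ranges $[\min_{A_s^+}m,\max_{A_s^+}m]$ and $[\min_{A_s^-}m,\max_{A_s^-}m]$ intersect, \emph{both} families die, and $0\in\mathrm{conv}\{v_j\}$ with no condition on $A_s^0$. Only when the ranges are strictly separated does one family survive, and it must then be killed by a point of $A_s^0$ of the right sign.

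The statement's (a)--(c) compare only $\gamma_{min}=\min_{A_s^-}m$ with $\gamma_{max}=\max_{A_s^+}m$, so they cannot express this. This is not a sign-convention issue that reflecting $\theta$ would fix: the ``only if'' direction of the proposition, including its ``in particular'' clause, is false.

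\textbf{Counterexample.} Take $T$ with rows $(1,1)$, $(1,e^{3\pi i/4})$, $(1,e^{5\pi i/4})$, $(1,\tfrac12)$. Then $k_1=k_2=1$, $C_{11}\cap C_{21}=\{1,2,3\}$, $C=\{4\}$, $\alpha_4=\tfrac12$ and $\phi_4=0$. Hence $A_4^+=\{1\}$ with slope $0$, $A_4^-=\{2,3\}$ with slopes $\sqrt2-2$ and $2-\sqrt2$, and $A_4^0=\emptyset$. So $\gamma_{min}=\sqrt2-2\neq0=\gamma_{max}$, and neither (1) nor (2) holds.

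Yet $\tfrac12$ lies in the triangle with vertices $1,e^{3\pi i/4},e^{5\pi i/4}$, so \eqref{eq-gen} holds for every $\theta$. Directly: if $\cos\theta\ge0$, use $j=1$. If $\cos\theta<0$, the right side $\tfrac12\cos\theta$ is negative, while some $\theta+\phi_j$ lies within $3\pi/8$ of $0$ modulo $2\pi$, so the left side is positive.

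\textbf{Where the paper's proof breaks, and what you can prove.} The paper's necessity argument fails at the same point. In the case $\gamma_{min}<\gamma_{max}$ it picks $\theta\in(0,\pi)$ with $\gamma_{min}\sin\theta<\cos\theta<\gamma_{max}\sin\theta$ and asserts \eqref{eq-g08}. With $\sin\theta>0$, however, \eqref{eq-g08} requires $\max_{A_s^-}m<\cot\theta<\min_{A_s^+}m$, which that choice does not give. The case $\gamma_{min}>\gamma_{max}$ with $\theta\in(\pi,2\pi)$ is correct.

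Your method does prove the ``if'' direction: each of (1), (2)(a), (2)(b), (2)(c) forces $0\in\mathrm{conv}\{v_j\}$. It also proves a corrected ``only if'': replace (a) by ``the two slope ranges intersect'', and replace the hypothesis $\gamma_{min}<\gamma_{max}$ in (c) by $\max_{A_s^-}m<\min_{A_s^+}m$. Equivalently, state the criterion as $\alpha_se^{i\phi_s}\in\mathrm{conv}\{e^{i\phi_j}:j\in C_{11}\cap C_{21}\}$ for every $s\in C$.
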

	\begin{proof}
		\textbf{Necessary part:} Suppose \eqref{eq-gen} holds. Let $s\in C.$ If either $A_s^+= \emptyset, $ or $A_s^-= \emptyset,$ then $(1)$ follows from Lemma \ref{lem-gen}.  Let $A_s^+\neq \emptyset, A_s^-\neq \emptyset.$ We show that $(2)$ holds. Suppose  $\gamma_{min}>\gamma_{max}.$ Assume to the contrary that either $A_s^0=\emptyset$ or for all $i\in A_s^0,$ $\sin(\phi_i)<\alpha_s\sin(\phi_s).$ Choose $\theta \in (\pi,2\pi)$ such that 
		\[\gamma_{min}\sin(\theta)<\cos(\theta)<\gamma_{max}\sin(\theta).\]
		Then 
		\begin{equation}\label{eq-g08}
			\begin{split}
				\gamma_{min}\sin(\theta)	&<\cos(\theta)<\gamma_{max}\sin(\theta)\\
				\Rightarrow \sin(\theta) \frac{\sin(\phi_j)-\alpha_s \sin(\phi_s)}{\cos(\phi_j)-\alpha_s \cos(\phi_s)} &<	\cos(\theta)\\
				&<\sin(\theta) \frac{\sin(\phi_i)-\alpha_s \sin(\phi_s)}{\cos(\phi_i)-\alpha_s \cos(\phi_s)}, ~\forall ~i\in A_s^+, j\in A_s^-\\
				\Rightarrow 	\cos(\theta)\Big(\cos(\phi_i)-\alpha_s \cos(\phi_s)\Big)&<\sin(\theta) \Big(\sin(\phi_i)-\alpha_s \sin(\phi_s)\Big), ~\forall ~i\in A_s^+\cup A_s^-\\
				\Rightarrow \cos(\theta-\phi_i)&<\alpha_s \cos(\theta-\phi_s), ~\forall ~i\in A_s^+\cup A_s^-.
			\end{split}
		\end{equation}
		Thus if $A_s^0=\emptyset,$ then since $C_{11}\cap C_{21}=A_s^+\cup A_s^-,$ the above inequality clearly contradicts \eqref{eq-gen}. On the other hand, for all $i\in A_s^0,$ $\sin(\phi_i)<\alpha_s\sin(\phi_s)$ implies that 
		\begin{equation}\label{eq-g09}
			\begin{split}
				\cos(\theta)\Big(\cos(\phi_i)-\alpha_s\cos(\phi_s)\Big)=0&<\sin(\theta) \Big(\sin(\phi_i)-\alpha_s \sin(\phi_s)\Big), \\
				\Rightarrow \cos(\theta-\phi_i)&<\alpha_s \cos(\theta-\phi_s), ~\forall ~i\in A_s^0.
			\end{split}
		\end{equation}
		Combining \eqref{eq-g08} and \eqref{eq-g09} we get a contradiction of \eqref{eq-gen}. Therefore, if $\gamma_{min}>\gamma_{max},$ then there must exist $i\in A_s^0,$ such that  $\sin(\phi_i)>\alpha_s\sin(\phi_s),$ that is, $(b)$ holds.\\
		
		Similarly, if  $\gamma_{min}<\gamma_{max},$ then for  $\theta \in (0,\pi)$ satisfying 
		\[\gamma_{min}\sin(\theta)<\cos(\theta)<\gamma_{max}\sin(\theta),\] 
		\eqref{eq-g08} holds, which contradicts \eqref{eq-gen} if $A_s^0=\emptyset.$ Moreover,  if for all $i\in A_s^0,$ $\sin(\phi_i)>\alpha_s\sin(\phi_s),$ then \eqref{eq-g09} holds, which combined with \eqref{eq-g08} contradicts \eqref{eq-gen}. Therefore, if $\gamma_{min}<\gamma_{max},$ then there must exist $i\in A_s^0,$ such that  $\sin(\phi_i)<\alpha_s\sin(\phi_s),$ that is, $(c)$ holds.\\
		
		Clearly, if $A_s^0=\emptyset,$ then $(b), (c)$ do not hold, and so $(a)$ must be true.\\
		
		\textbf{Sufficient part:} Let $s\in C.$ We show that for all $\theta\in [0,2\pi),$
		\begin{equation}\label{eq-g10}
			\max\Big\{\cos(\theta-\phi_i):i\in C_{11}\cap C_{21}\Big\}\geq \alpha_s\cos(\theta-\phi_s).
		\end{equation}
		Suppose $(1)$ holds. Then there exist distinct $i,j\in A_s^0$ such that 
		\[\Big(\sin(\phi_i)-\alpha_s\sin(\phi_s)\Big)>0, \quad \Big(\sin(\phi_j)-\alpha_s\sin(\phi_s)\Big)<0.\]
		If $\theta\in [0,\pi),$ then 
		\[\sin(\theta)\Big(\sin(\phi_j)-\alpha_s\sin(\phi_s)\Big)\leq 0\quad \Rightarrow \quad  \cos(\theta-\phi_j)\geq \alpha_s\cos(\theta-\phi_s).\]
		If $\theta\in [\pi,2\pi),$ then 
		\[\sin(\theta)\Big(\sin(\phi_i)-\alpha_s\sin(\phi_s)\Big)\leq 0\quad \Rightarrow \quad  \cos(\theta-\phi_i)\geq \alpha_s\cos(\theta-\phi_s).\]
		Therefore, for all $\theta\in [0,2\pi),$ \eqref{eq-g10} holds.\\
		Now suppose $(2)$ holds. Let 
		\begin{eqnarray*}
			\gamma_{min}&=&\frac{\sin(\phi_l)-\alpha_s \sin(\phi_s)}{\cos(\phi_l)-\alpha_s \cos(\phi_s)},~\text{ for some } l\in A_s^-,	\\
			\gamma_{max}&=&\frac{\sin(\phi_j)-\alpha_s\sin(\phi_s)}{\cos(\phi_j)-\alpha_s \cos(\phi_s)},~\text{ for some }j\in A_s^+.
		\end{eqnarray*}
		First assume $(a)$ holds. 
		For $\theta\in [0,2\pi),$ if $\cos(\theta)<\gamma_{\max}\sin\theta=\gamma_{min}\sin(\theta),$ then 
		\begin{eqnarray*}
			\cos(\theta) &<& \sin(\theta)\frac{\sin(\phi_l)-\alpha_s \sin(\phi_s)}{\cos(\phi_l)-\alpha_s \cos(\phi_s)}\\
			\Rightarrow \cos(\theta-\phi_l)&>&\alpha_s\cos(\theta-\phi_s).
		\end{eqnarray*}
		Similarly, if $\cos(\theta)\geq \gamma_{\max}\sin\theta,$ then $ \cos(\theta-\phi_j)>\alpha_s\cos(\theta-\phi_s).$ \\
		Now assume $(b)$ holds. Let $\theta\in[0,\pi].$ 
		If $\cos(\theta)\geq \gamma_{min}\sin\theta,$ then clearly $$\cos(\theta)\geq \gamma_{max}\sin\theta \quad \Rightarrow \quad\cos(\theta-\phi_j)\geq \alpha_s\cos(\theta-\phi_s).$$ If $\cos(\theta)< \gamma_{min}\sin\theta,$ then $\cos(\theta-\phi_l)> \alpha_s\cos(\theta-\phi_s).$ If $\theta\in (\pi,2\pi),$ then from $(b),$ and $\sin(\theta)<0,$ it follows that $$\sin(\theta)\Big(\sin(\phi_i)-\alpha_s\sin(\phi_s)\Big)<0=\cos(\theta)\Big(\\cos(\phi_i)-\alpha_s\cos(\phi_)\Big),$$
		consequently, $\cos(\theta-\phi_i)>\alpha_s\sin(\theta-\phi_s).$\\
		Now assume $(c)$ holds. Let $\theta\in(\pi,2\pi).$ 
		If $\cos(\theta)\geq \gamma_{min}\sin\theta,$ then clearly 
		$$\cos(\theta)> \gamma_{max}\sin\theta \quad \Rightarrow \quad   \cos(\theta-\phi_j)> \alpha_s\cos(\theta-\phi_s).$$
		If $\cos(\theta)< \gamma_{min}\sin\theta,$ then $\cos(\theta-\phi_l)> \alpha_s\cos(\theta-\phi_s).$ If $\theta\in [0,\pi],$ then from $(c),$ and $\sin(\theta)\geq 0,$ it follows that $$\sin(\theta)\Big(\sin(\phi_i)-\alpha_s\sin(\phi_s)\Big)\leq0=\cos(\theta)\Big(\\cos(\phi_i)-\alpha_s\cos(\phi_s)\Big),$$
		consequently, $\cos(\theta-\phi_i)\geq\alpha_s\sin(\theta-\phi_s).$\\
		Therefore, if $(2)$ holds, then for all $\theta\in [0,2\pi),$ \eqref{eq-g10} holds.
	\end{proof}
	
	Using  Proposition \ref{prop-gen}, and Corollary \ref{cor-03}, it is now easy to construct linear maps $T:\ell_1^2\to \ell_\infty^m$ preserving parallel pairs for complex scalars.
	\begin{example}
		Suppose $T:\ell_1^2\to \ell_\infty^3$ is given by $T=\begin{bmatrix}
			1&e^{i\frac{\pi}{3}}\\
			1&e^{i\frac{5\pi}{3}}\\
			1&\frac{1}{2}
		\end{bmatrix}.$ Then $$k_1=k_2=1,\quad \phi_1=\frac{\pi}{3},\phi_2=\frac{5\pi}{3},\phi_3=0, \text{ and}$$
	$$C_{11}\cap C_{21}=\{1,2\}, \text{ and }C=C_{11}\cap C_{22}=\{3\}.$$ For $s=3\in C,~ \alpha_3=|t_{32}|=\frac{1}{2}.$	Therefore, 
		$\cos(\phi_1)=\cos(\phi_2)=\frac{1}{2}=\alpha_3\cos(\phi_3).$ So $A_3^0=\{1,2\}, A_3^+=A_3^-=\emptyset.$ Moreover, for $1,2\in A_3^0,$ $$\Big(\sin(\phi_1)-\alpha_3\sin(\phi_3)\Big)\Big(\sin(\phi_2)-\alpha_3\sin(\phi_3)\Big)=-\frac{3}{4}<0.$$
		Therefore, from Proposition \ref{prop-gen} it follows that  $T$ satisfies \eqref{eq-gen}, and so by Corollary \ref{cor-03}, $T$ preserves parallel pairs. 
	\end{example}

Our final goal in this section is to characterize the TEA pair preservers in $\mathcal{L}(\ell_1(J),\mathcal{Y}),$ where either $\mathcal{Y}=\ell_\infty^m$ or $\mathcal{Y}$ is strictly convex.   To consider the case $\mathcal{Y}=\ell_\infty^m,$ we need the following lemma.

\begin{lemma}\label{lem-tea}
Assume $\mathbb{F}=\mathbb{C}.$	Suppose $T\in \mathcal{L}(\ell_1^2, \ell_\infty^m)$ preserves parallel pairs, and $rank(T)=2.$ Let $C\neq \emptyset.$ 
Assume $(\alpha,\beta)\in \ell_1^2,$ where $\alpha>0,\beta=|\beta|e^{i\theta}\neq 0,$ and $$|\beta|^2\notin\Big\{\frac{k_1^2\alpha^2}{k_2^2-|t_{i2}|^2}, \frac{\alpha^2(k_1^2-|t_{j1}^2|)}{k_2^2}:i\in C_{22},j\in C_{12}\Big\}.$$ 
Suppose $T(\alpha,\beta)$ attains its norm at the $r$-th coordinate. Then $t_{r1}t_{r2}\neq 0.$
\end{lemma}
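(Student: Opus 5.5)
We argue by contradiction: assume $T(\alpha,\beta)$ attains its norm at the $r$-th coordinate but $t_{r1}t_{r2}=0$, and show that $|\beta|^2$ must then be one of the excluded values. The coordinates of $T(\alpha,\beta)$ are $t_{i1}\alpha+t_{i2}\beta$, so the norm is $\max_i|t_{i1}\alpha+t_{i2}\beta|$. Since $(e_1,e_2)$ is a parallel pair, $C_{11}\cap C_{21}\neq\emptyset$. For any $j\in C_{11}\cap C_{21}$ we have $|t_{j1}|=k_1$ and $|t_{j2}|=k_2$. The key comparison is therefore between $|t_{r1}\alpha+t_{r2}\beta|$ and $\max_{j\in C_{11}\cap C_{21}}|k_1\alpha+k_2|\beta|e^{i(\theta+\phi_j)}|$.

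\textbf{Case $t_{r2}=0$.} Then $|(T(\alpha,\beta))_r|=|t_{r1}|\alpha\le k_1\alpha$. The strategy is to produce a coordinate whose modulus strictly exceeds $k_1\alpha$, unless an exceptional equality holds.

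First, the case $T(\alpha,\beta)=0$ is ruled out, since $rank(T)=2$ makes $T$ injective.

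Next, since $T$ preserves parallel pairs and $((\alpha,\beta),(1,0))$ and $((\alpha,\beta),(0,1))$ are parallel pairs in $\ell_1^2$, the vector $T(\alpha,\beta)$ must attain its norm at a coordinate lying in $C_{11}$, and also at a coordinate lying in $C_{21}$. Use this to move the norming coordinate $r$ into $C_{11}$ or $C_{21}$.
\begin{itemize}
\item If the norm is attained at some $i\in C_{21}$ with $t_{i1}=0$, the norm equals $k_2|\beta|$. Equating this with the norm $|t_{r1}|\alpha$ at $r$ gives $|\beta|^2k_2^2=\alpha^2|t_{r1}|^2$.
\item The excluded value $\alpha^2(k_1^2-|t_{j1}|^2)/k_2^2$ for $j\in C_{12}$ should arise from a Pythagorean-type equality $k_1^2\alpha^2=|t_{j1}|^2\alpha^2+k_2^2|\beta|^2$. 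I expect this to occur when the norming coordinates are forced to have equal modulus through the parallel-pair constraint with $(1,0)$.
\end{itemize}

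\textbf{Case $t_{r1}=0$.} This is symmetric: the norm at $r$ is $|t_{r2}||\beta|$, and the analogous equality $k_2^2|\beta|^2=k_1^2\alpha^2+|t_{i2}|^2|\beta|^2$, with $i\in C_{22}$, yields the other excluded value $k_1^2\alpha^2/(k_2^2-|t_{i2}|^2)$.

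\textbf{Main obstacle.} The hard step is explaining why these Pythagorean equalities (rather than generic inequalities) are forced. My plan is to use that $\mathbb{F}=\mathbb{C}$: rotate $\beta$ by a small phase $\epsilon$. Parallel-pair preservation for $((\alpha,\beta e^{i\epsilon}),(1,0))$ should hold for all $\epsilon$. The coordinate $r$, with only one nonzero entry, has modulus independent of $\epsilon$, whereas the coordinates $j\in C_{11}\cap C_{21}$ vary like $k_1^2\alpha^2+k_2^2|\beta|^2+2k_1k_2\alpha|\beta|\cos(\theta+\epsilon+\phi_j)$.

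Tie-breaking (norm attained at $r$ and simultaneously at a coordinate in $C_{11}$, or in $C_{21}$) should then force a mean-value equality: averaging over $\epsilon$ (using Theorem~\ref{th-com}(1) and inequality \eqref{eq-06}) gives a norm at least $\sqrt{k_1^2\alpha^2+k_2^2|\beta|^2}$ somewhere. This is compared with $|t_{r1}|\alpha$ or $|t_{r2}||\beta|$.

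This comparison step, relating \eqref{eq-06} to a genuine equality, is where I am least sure. If it fails, the fallback is a direct case analysis on whether $r\in C_{11}$, $C_{12}$, $C_{21}$, or $C_{22}$, using Proposition~\ref{prop-nec} and Proposition~\ref{prop-04}.
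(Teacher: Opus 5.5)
Your proposal has a genuine gap. The step that should force the ``Pythagorean'' equalities is never supplied, and the mechanism you sketch for it would not work.

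\textbf{The missing idea.} The lemma is a statement at one fixed $\theta$. What is needed is the pointwise fact, contained in the sufficiency half of Theorem \ref{th-com} (using \eqref{eq-06} and the discriminant condition of Proposition \ref{prop-04}), that $T(\alpha,\beta)$ attains its norm at some $j\in A_\theta$. This is much stronger than attaining it somewhere in $C_{11}$ and somewhere in $C_{21}$, because it pins down the maximal modulus: $|(T(\alpha,\beta))_j|^2=k_1^2\alpha^2+k_2^2|\beta|^2+2k_1k_2\alpha|\beta|c$ with $c=\cos(\theta+\phi_j)$.

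Take $t_{r1}=0$, so $r\in C_{12}$ and $\alpha_r=0$. If $r\in C_{21}$, or if $r\in C_{12}\cap C_{22}$ with $r\notin B_\theta$, then \eqref{eq-06}, respectively the definition of $B_\theta$, gives $c\ge 0$. Hence $|(T(\alpha,\beta))_j|^2\ge k_1^2\alpha^2+k_2^2|\beta|^2>|t_{r2}\beta|^2$, so the norm is simply not attained at $r$, and no exceptional value is involved. If instead $r\in B_\theta$, then $c<0$, and Proposition \ref{prop-04} gives $\beta_r^2\le\zeta_r\eta_r$, i.e.\ $|t_{r2}|^2\le k_2^2(1-c^2)$. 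Equating the moduli at $j$ and $r$ yields $(k_1\alpha+k_2|\beta|c)^2\le 0$, so all inequalities are equalities: $k_1\alpha=-k_2|\beta|c$ and $|t_{r2}|^2=k_2^2(1-c^2)$. Together these say exactly $k_2^2|\beta|^2=k_1^2\alpha^2+|t_{r2}|^2|\beta|^2$, the excluded value for $r\in C_{22}$. The case $t_{r2}=0$ is symmetric and gives the other excluded value. So the equalities come from the equality case of the discriminant condition: in $x=\alpha/|\beta|$, the difference of the squared moduli at $r$ and $j$ is $|\beta|^2(\zeta_r x^2-2\beta_r x+\eta_r)$ with $\zeta_r<0$ and nonpositive discriminant, which can vanish only at a double root.

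\textbf{Why the other steps fail.} Averaging over $\epsilon$ cannot replace this. Rotating $\beta$ changes the point at which the lemma is asserted. Moreover, in the only delicate subcase ($r\in B_\theta$, $c<0$) the norm of $T(\alpha,\beta)$ at the given $\theta$ lies strictly below the root-mean-square value $\sqrt{k_1^2\alpha^2+k_2^2|\beta|^2}$. So an averaged bound is useless there, and it only ever yields inequalities. Several intermediate steps are also off:
\begin{itemize}
\item You cannot ``move'' $r$ into $C_{11}$ or $C_{21}$, since the conclusion must hold for every coordinate at which the norm is attained.
\item Your first bullet ends with $k_2^2|\beta|^2=|t_{r1}|^2\alpha^2$, which is not an excluded value. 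That configuration must be ruled out outright (as above, via $c\ge 0$), not reduced to an equality.
\item For $t_{r2}=0$, exceeding $k_1\alpha$ is the wrong target when $r\in C_{12}$. For $r\in B_\theta$, every coordinate of $T(\alpha,\beta)$ can lie below $k_1\alpha$ on an open range of $|\beta|$. The correct comparison is with $|t_{r1}|\alpha$, through Proposition \ref{prop-04}.
\end{itemize}
Your fallback, a case analysis on the location of $r$ using Propositions \ref{prop-nec} and \ref{prop-04}, is the right route and is essentially the paper's proof, but the proposal does not carry it out.
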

\begin{proof}
If $r\in C_{11}\cap C_{21},$ then the conclusion follows immediately. Suppose $r\in C.$ Assume to the contrary that $t_{r1}t_{r2}=0.$ Then either of the following holds:
\begin{enumerate}
	\item $r\in C_{12}\cap C_{21},$ and $t_{r1}=0.$
	\item $r\in C_{11}\cap C_{22},$ and $t_{r2}=0.$
	\item $r\in C_{12}\cap C_{22},$ and either $t_{r1}=0$ or $t_{r2}=0.$
\end{enumerate}
By Theorem  \ref{th-com}, first suppose  \eqref{eq-06} holds and $B_\theta\neq \emptyset$. From the proof of Theorem  \ref{th-com}, it follows that $T(\alpha,\beta)$ attains its norm for some $j\in A_\theta.$   Assume $(1)$ holds. Then by  \eqref{eq-06}, $\cos(\theta+\phi_j)\geq 0.$
 If $T(\alpha,\beta)$ also attains its norm at the $r$-th coordinate, then $|t_{j1}\alpha+t_{j2}\beta|=|t_{r2}\beta|$ implies that $k_1\alpha+2k_2|\beta|\cos(\theta+\phi_j)=0,$ which is a contradiction. Similarly if $(2)$ holds, then $T(\alpha,\beta)$ does not attain its norm at the $r$-th coordinate. Now assume  $(3)$ holds. First suppose $r\in  B_{\theta}$ and $t_{r1}=0.$ Note that $\cos(\theta+\phi_j)<0.$ Moreover, $\beta_r^2\leq \zeta_r\eta_r$ implies that $$k_1^2k_2^2 \cos^2(\theta+\phi_j)\leq k_1^2(k_2^2-|t_{r2}|^2)\Rightarrow k_2^2\sin^2(\theta+\phi_j)\geq |t_{r2}|^2.$$  If $T(\alpha,\beta)$ also attains its norm at the $r$-th coordinate, then  
\begin{eqnarray*}
	|t_{j1}\alpha+t_{j2}\beta|&=&|t_{r2}\beta|\\
	\Rightarrow k_1^2\alpha^2+k_2^2|\beta|^2+2k_1k_2|\alpha\beta|\cos(\theta+\phi_j)&=&|t_{r2}\beta|^2\leq |\beta|^2k_2^2\sin^2(\theta+\phi_j)\\
	\Rightarrow (k_1\alpha+k_2|\beta|\cos(\theta+\phi_j))^2&\leq &0\\
		\Rightarrow k_1\alpha+k_2|\beta|\cos(\theta+\phi_j)&=&0.
\end{eqnarray*}
Thus
\[|t_{r2}\beta|=|t_{j1}\alpha+t_{j2}\beta|=|k_1\alpha+k_2|\beta|e^{i(\theta+\phi_j)}|=k_2|\beta\sin(\theta+\phi_j)|.\]
From the last two equations, we get 
$|\beta|^2=\frac{k_1^2\alpha^2}{k_2^2-|t_{r2}|^2},$ a contradiction. On the other hand, if  $r\in B_{\theta}$ and $t_{r2}=0,$ then proceeding similarly,  we get $|\beta|^2=\frac{\alpha^2(k_1^2-|t_{r1}^2|)}{k_2^2},$ which is again a contradiction. Finally, assume $(3)$ holds, and $r\notin  B_{\theta}.$ Then $\cos(\theta+\phi_j)\geq0.$  If $T(\alpha,\beta)$ also attains its norm at the $r$-th coordinate, then $|t_{j1}\alpha+t_{j2}\beta|=|t_{r2}\beta|$ implies that $k_1\alpha+2k_2|\beta|\cos(\theta+\phi_j)<0,$ which is clearly a contradiction. \\

Now, suppose that  \eqref{eq-06} holds and  $B_\theta=\emptyset$, that is, \eqref{eq-gen} holds. Then it is easy to check  that $T(\alpha,\beta)$ attains its norm for some $s\in C_{11}\cap C_{21}.$ As previous, we can show that $T(\alpha,\beta)$ does not attain its norm at the $r$-th coordinate.
\end{proof}
Before the final result, we characterize the non-zero TEA pair preservers in  $\mathcal{L}(\ell_1^n,\mathcal{Y}).$ 
\begin{theorem}\label{th-tea}
 Suppose either of the following holds:
\begin{itemize}
	\item $\mathcal{Y}$ is strictly convex. 
	\item $\mathcal{Y}=\ell_\infty^m,$ and $\mathbb{F}=\mathbb{C}.$	
	\end{itemize}  	Then a non-zero map $T\in \mathcal{L}(\ell_1^n,\mathcal{Y})$ preserves TEA pairs if and only if $\Lambda=\{1\leq i\leq n:Te_i\neq 0\}$ is singleton.
\end{theorem}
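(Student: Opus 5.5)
Sufficiency is immediate. If $\Lambda=\{i_0\}$, then $Tx=x_{i_0}Te_{i_0}$. For a TEA pair $(x,y)$ in $\ell_1^n$ we have $\overline{x_k}y_k\ge 0$ for every $k$, in particular for $k=i_0$. Hence $x_{i_0}$ and $y_{i_0}$ have the same argument, and
$$\|Tx+Ty\|=|x_{i_0}+y_{i_0}|\,\|Te_{i_0}\|=\|Tx\|+\|Ty\|.$$
For necessity, assume $T\neq 0$ preserves TEA pairs and suppose $Te_1,Te_2\neq 0$. Since $(e_1,e_2)$ and $(e_1,\mu e_2)$ are TEA pairs for every $\mu$ with $|\mu|=1$, the restriction $S=TR$ to $\operatorname{span}\{e_1,e_2\}$ (with $R$ as in the remark before Proposition \ref{prop-nec}) also preserves TEA pairs. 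Note that $(e_1,\mu e_2)$ is TEA for all unimodular $\mu$, so $\|Te_1+\mu Te_2\|=\|Te_1\|+\|Te_2\|$ for all $\mu\in\mathbb{T}$. The plan is to contradict this.

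\emph{Strictly convex case.} Equality in the triangle inequality forces $Te_2=c\mu^{-1} Te_1$ with $c>0$, for every $\mu\in\mathbb{T}$. Taking $\mu=1$ and $\mu=-1$ gives $Te_2=cTe_1=-c'Te_1$ with $c,c'>0$, which forces $Te_1=0$, a contradiction. This works over both $\mathbb{R}$ and $\mathbb{C}$.

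\emph{Case $\mathcal{Y}=\ell_\infty^m$, $\mathbb{F}=\mathbb{C}$.} Here $\|Te_1+\mu Te_2\|=k_1+k_2$ requires a coordinate $r\in C_{11}\cap C_{21}$ with $t_{r1}\overline{\mu t_{r2}}\ge0$, i.e.\ $\mu$ must equal a phase determined by $r$. There are only finitely many rows but uncountably many $\mu$, so some $\mu$ fails. This gives the contradiction outright, without needing Lemma \ref{lem-tea}.

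Caveat: $(e_1,\mu e_2)$ is a TEA pair in $\ell_1^n$ since $\|e_1+\mu e_2\|_1=2$, which is immediate. If one prefers to stay with real-positive combinations, the alternative is to use pairs $((\alpha,\beta),(\gamma,\delta))$ with matching phases together with Lemma \ref{lem-tea}. The direct argument above seems sufficient, however, so the main (mild) point to check is precisely this TEA-pair claim for $(e_1,\mu e_2)$.
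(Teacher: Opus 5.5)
Your proof is correct, and it takes a genuinely different and much shorter route than the paper. The paper works in two stages. First it shows $rank(T)=1$. This is trivial for strictly convex $\mathcal{Y}$, but for complex $\ell_\infty^m$ it is delicate: the paper restricts to two independent columns, picks a phase $\theta$ avoiding $-\phi_i+k\pi$, and takes $(x_1,x_2),(y_1,y_2)$ with that common phase but different ratios $x_1/|x_2|\neq y_1/|y_2|$. It then uses Lemma~\ref{lem-tea} to ensure the common norming row $r$ has $t_{r1}t_{r2}\neq 0$, and compares the arguments of the $r$-th coordinates. Second, writing $Te_i=aTe_1$, it uses the kernel vector $ae_1-e_i$ and the TEA pair $(x+\epsilon e^{i\theta}e_1,x-\epsilon e^{i\theta}e_1)$ to force $a=0$.

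You instead use only the TEA pairs $(e_i,\mu e_j)$ with $i\neq j$ and $\mu\in\mathbb{T}$, which exclude $Te_i\neq 0\neq Te_j$ immediately. In the strictly convex case $\mu=\pm 1$ suffices. In complex $\ell_\infty^m$, each of the at most $m$ rows with $|t_{r1}|=k_1$, $|t_{r2}|=k_2$ determines a single admissible $\mu$, while $\mathbb{T}$ is infinite. This bypasses the rank-one step, Lemma~\ref{lem-tea}, and the kernel argument. It also applies verbatim to $\ell_1(J)$ with $J$ infinite, so Corollary~\ref{cor-teainf} follows without Theorem~\ref{th-rank} or Corollary~\ref{cor-fr}.

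Conceptually, $(ae_i,be_j)$ is a TEA pair for all scalars $a,b$. So $|\Lambda|\geq 2$ means the normalized vectors $Te_i/\|Te_i\|$ and $Te_j/\|Te_j\|$ span an isometric copy of $\ell_1^2$. Hence the theorem holds for every $\mathcal{Y}$ containing no isometric copy of $\ell_1^2$, for instance every smooth space. It also explains Theorem~\ref{th-tear}: over $\mathbb{R}$ one has $\mathbb{T}=\{\pm 1\}$, and real $\ell_1^2\cong\ell_\infty^2$ sits inside $\ell_\infty^m$. The paper's route yields finer information on where $T(\alpha,\beta)$ attains its norm for parallel pair preservers, but this statement does not need it.

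One minor point: $S=TR$ preserves TEA pairs because $R$ does (Theorem~\ref{th-li}), not because of the particular pairs $(e_1,\mu e_2)$. Since you never use $S$, that sentence can simply be dropped.
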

\begin{proof}
\textbf{Necessary part:} Suppose $T:\ell_1^n\to\mathcal{Y}$ preserves TEA pairs, and $T\neq 0.$ Then $T$  preserves parallel pairs.\\
\textbf{Step 1:} In this step we prove that $rank(T)=1.$ This holds trivially, if $\mathcal{Y}$ is strictly convex. Suppose $\mathcal{Y}=\ell_\infty^m,$ and $\mathbb{F}=\mathbb{C}.$	 Assume to the contrary that $rank(T)\geq 2.$ Without loss of generality, we may assume that  $T=(t_{ij}),$ where $\{Te_1,Te_2\}$ is linearly independent. Let $S=\begin{bmatrix}
	t_{11}&t_{12}\\
	t_{21}&t_{22}\\
	\vdots &\vdots\\
	t_{m1}&t_{m2}
\end{bmatrix}.$ 	
Since by Theorem \ref{th-li}, the mapping $P:\ell_1^2\to \ell_1^n$ defined by 
$Pe_1=e_1,Pe_2=e_2$ preserves TEA pairs, $S=TP:\ell_1^2\to \mathcal{Y}$ preserves TEA pairs, consequently, $S$ preserves parallel pairs. Without loss of generality, assume $t_{j1}\geq 0,$ and $t_{j2}=|t_{j2}|e^{i\phi_j}$ for $1\leq j\leq m.$ Choose $\theta\in [0,2\pi)\setminus\{-\phi_i+k\pi:1\leq i\leq m,k\in \mathbb{Z}\},$ and $x_1>0,y_1>0, x_2=|x_2|e^{i\theta}, y_2=|y_2|e^{i\theta}$ such that the following conditions hold:
\begin{enumerate}
	\item $|x_2|^2\notin \Big\{\frac{k_1^2x_1^2}{k_2^2-|t_{r2}|^2}, \frac{x_1^2(k_1^2-|t_{s1}^2|)}{k_2^2}:r\in C_{22},s\in C_{12}\Big\}$
	\item $\frac{x_1}{|x_2|}>\frac{y_1}{|y_2|}>\Big\{\frac{|t_{r2}|}{t_{r1}}:t_{r1}t_{r2}\neq 0,1\leq r\leq m\Big\}$
	\end{enumerate} Clearly, $\Big((x_1,x_2),(y_1,y_2)\Big)$ is a TEA pair in $\ell_1^2,$ and so is $\Big(S(x_1,x_2),S(y_1,y_2)\Big)$  in $\ell_\infty^m.$ 
Thus there exists $1\leq r\leq m$ such that $S(x_1,x_2)$ and $S(y_1,y_2)$ attain their norm at the $r$-th coordinate, say $(S(x_1,x_2))_r, $ $(S(y_1,y_2))_r,$ respectively. Moreover,  $\overline{(S(x_1,x_2))_r}(S(y_1,y_2))_r\geq 0,$ that is, $\arg((S(x_1,x_2))_r)=\arg((S(y_1,y_2))_r)=\mu,$ say. By Lemma \ref{lem-tea}, $t_{r1}t_{r2}\neq 0.$ From 
$$(S(x_1,x_2))_r=t_{r1}x_1+|t_{r2}x_2| e^{i(\theta+\phi_r)}, \text{ and } (S(y_1,y_2))_r=t_{r1}y_1+|t_{r2}y_2| e^{i(\theta+\phi_r)},$$ it follows that
\begin{equation}\label{eq-tea}
\tan(\mu)=\frac{|t_{r2}x_2|\sin(\theta+\phi_r)}{t_{r1}x_1+|t_{r2}x_2|\cos(\theta+\phi_r)}=\frac{|t_{r2}y_2|\sin(\theta+\phi_r)}{t_{r1}y_1+|t_{r2}y_2|\cos(\theta+\phi_r)}.
\end{equation}
Note that, by the choice of $\theta,$ $\sin(\theta+\phi_r)\neq 0.$ Moreover,
$$t_{r1}x_1+|t_{r2}x_2|\cos(\theta+\phi_r)\neq 0,$$ for otherwise, $|\cos(\theta+\phi_r)|=\frac{t_{r1}x_1}{|t_{r2}x_2|}>1,$ by $(2),$ a contradiction. Similarly, $$t_{r1}y_1+|t_{r2}y_2|\cos(\theta+\phi_r)\neq 0.$$
Now \eqref{eq-tea} implies that $\frac{x_1}{|x_2|}=\frac{y_1}{|y_2|},$ which contradicts $(2).$ This proves the claim that $rank(T)= 1.$ \\
\textbf{Step 2:}  Without loss of generality, assume that $Te_1\neq 0.$ Let $2\leq i\leq n,$ and $Te_i=aTe_1.$ Then $x=ae_1-e_i\in \ker(T).$  We show that $a=0.$  Suppose to the contrary that $a=|a|e^{i\theta}\neq 0.$ Choose $0<\epsilon<|a|.$ Observe that $(x+\epsilon e^{i\theta}e_1,x-\epsilon e^{i\theta}e_1)$  is a TEA pair in $\ell_1^n,$ and so $(Tx+\epsilon e^{i\theta}Te_1,Tx-\epsilon e^{i\theta}Te_1)=(\epsilon e^{i\theta}Te_1,-\epsilon e^{i\theta}Te_1)$ is a TEA pair in $\mathcal{Y}.$ Therefore,
\[0=\|\epsilon e^{i\theta}Te_1-\epsilon e^{i\theta}Te_1\|=\|\epsilon e^{i\theta}Te_1\|+\|-\epsilon e^{i\theta}Te_1\|,\]
which implies that $Te_1=0,$ a contradiction. Thus  $Te_i=0$ for all $2\leq i\leq n.$\\

\textbf{Sufficient part:} Suppose $Te_j\neq 0$ for some $1\leq j\leq n,$ and $Te_i=0$ for all $1\leq i\leq n, i\neq j.$ Let $(x,y)$ be a TEA pair in $\ell_1^n,$  and $x=(x_1,\ldots,x_n), y=(y_1,\ldots,y_n).$ Then $\overline{x_j}y_j\geq 0$ for all $1\leq j\leq n.$ 
Now, $$\|Tx+Ty\|=|x_j+y_j|\|Te_j\|=(|x_j|+|y_j|)\|Te_j\|=\|Tx\|+\|Ty\|$$ implies that $(Tx,Ty)$ is a TEA pair in $\mathcal{Y}.$ Therefore, $T$ preserves TEA pairs. 
\end{proof}
 The next characterization of the non-zero TEA pair preservers in  $\mathcal{L}(\ell_1(J),\mathcal{Y})$ follows from the last theorem.
\begin{cor}\label{cor-teainf}
	 Suppose either of the following holds:
	\begin{itemize}
		\item $\mathcal{Y}$ is strictly convex. 
		\item $\mathcal{Y}=\ell_\infty^m,$ and $\mathbb{F}=\mathbb{C}.$	
	\end{itemize}  Then a non-zero map $T\in \mathcal{L}(\ell_1(J),\mathcal{Y})$ preserves TEA pairs if and only if $\Lambda=\{i\in J:Te_i\neq 0\}$ is singleton.
\end{cor}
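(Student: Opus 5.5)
The plan is to reduce Corollary \ref{cor-teainf} to Theorem \ref{th-tea} by restricting $T$ to finite subsets of coordinates. For a finite set $F=\{i_1,\ldots,i_n\}\subseteq J$, define $P_F:\ell_1^n\to\ell_1(J)$ by $P_Fe_k=e_{i_k}$. This map is an isometric embedding onto the coordinates in $F$. It preserves TEA pairs, since $\overline{x_k}y_k\geq 0$ coordinatewise is unaffected by padding with zeros (compare the argument in Theorem \ref{th-li}). Hence if $T$ preserves TEA pairs, so does $TP_F\in\mathcal{L}(\ell_1^n,\mathcal{Y})$.

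\textbf{Necessity.} Suppose $T\neq 0$ preserves TEA pairs. Since $T$ is bounded and $\ell_1(J)$ is the closed span of the $e_i$, $T\neq0$ forces $Te_{i_0}\neq 0$ for some $i_0$, so $\Lambda\neq\emptyset$. Now take any $i\in\Lambda$ and apply Theorem \ref{th-tea} to $TP_F$ with $F=\{i_0,i\}$. The map $TP_F$ is non-zero and preserves TEA pairs, so exactly one of $Te_{i_0},Te_i$ is non-zero. Therefore $i=i_0$, and $\Lambda=\{i_0\}$.

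\textbf{Sufficiency.} Suppose $\Lambda=\{j\}$. By boundedness and linearity, $Tx=x_jTe_j$ for every $x=\sum x_ie_i\in\ell_1(J)$, since $T$ kills the closed span of $\{e_i:i\neq j\}$. If $(x,y)$ is a TEA pair in $\ell_1(J)$, then $\overline{x_i}y_i\geq0$ for all $i$. This uses the characterization via $J(x)\cap J(y)$: a norming functional in $\ell_\infty(J)$ must equal $\overline{x_i}/|x_i|$ wherever $x_i\neq0$. Hence $|x_j+y_j|=|x_j|+|y_j|$, and so $\|Tx+Ty\|=\|Tx\|+\|Ty\|$, exactly as in the sufficient part of Theorem \ref{th-tea}.

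The only point needing care is the justification of the coordinatewise TEA criterion in infinite-dimensional $\ell_1(J)$, and of $Tx=x_jTe_j$ via continuity. Both are routine.
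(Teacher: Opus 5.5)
Your proof is correct, but it is organized differently from the paper's.

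The paper first shows $rank(T)=1$. In the strictly convex case this is immediate. For $\mathcal{Y}=\ell_\infty^m$ it argues by contradiction: if $rank(T)>1$, Corollary \ref{cor-fr} (hence Theorem \ref{th-rank}) gives $|\Lambda|=rank(T)<\infty$, and applying Theorem \ref{th-tea} to the compression $S$ on $\ell_1(\Lambda)$ makes $\Lambda$ a singleton, which is absurd. It then reruns the kernel argument of Step 2 of Theorem \ref{th-tea} inside $\ell_1(J)$ to kill all columns but one.

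You instead note that TEA preservation passes to every two-coordinate restriction $TP_F$. This is immediate because $P_F$ is a linear isometry: $\|P_Fx+P_Fy\|=\|x+y\|$. So the $n=2$ case of Theorem \ref{th-tea} already rules out two non-zero columns. This buys several things:
\begin{itemize}
\item it treats both choices of $\mathcal{Y}$ uniformly;
\item it never needs finiteness of the rank or the kernel-structure result for parallel pair preservers (Theorem \ref{th-rank} and Corollary \ref{cor-fr});
\item it needs no infinite-dimensional rerun of Step 2.
\end{itemize}
The paper's route fits its general reduction scheme, where finite-rank parallel pair preservers live on finitely many coordinates. For this corollary, though, your pairwise restriction is the shorter argument. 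It also shows the corollary only depends on Theorem \ref{th-tea} for $n=2$.

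Your sufficiency argument matches the paper's. The coordinatewise TEA criterion in $\ell_1(J)$ does not even need norming functionals, since $\sum_i|x_i+y_i|=\sum_i(|x_i|+|y_i|)$ forces equality term by term.
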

\begin{proof}
	The sufficient part follows similarly as Theorem \ref{th-tea}. For the necessary part, assume $T\neq 0,$ and $T$ preserves TEA pairs. Then $T$ preserves parallel pairs. If $\mathcal{Y}$ is strictly convex, then clearly $rank(T)=1.$ Suppose $\mathcal{Y}=\ell_\infty^m.$  If $rank(T)> 1,$ then from Corollary \ref{cor-fr}, it follows that $|\Lambda|=rank(T).$ Then $S:\ell_1(\Lambda)\to \mathcal{Y}$ defined as  in Corollary \ref{cor-fr} preserves TEA pairs. By Theorem \ref{th-tea}, $\Lambda$ is singleton, contradicting that $|\Lambda|=rank(T).$ Thus in each case, $rank(T)=1.$ Now the result follows proceeding similarly as Step 2 of Theorem \ref{th-tea}.
\end{proof}
	
	Now we characterize the TEA pair preservers in $\mathcal{L}(\ell_1^2,\ell_\infty^m)$ if the scalars are real.  We show that in contrast to the complex scalars, there are TEA pair preservers of  rank $2$  in $\mathcal{L}(\ell_1^2,\ell_\infty^m)$ if the scalars are real.
	
 \begin{theorem}\label{th-tear}
 	Assume $\mathbb{F}=\mathbb{R}.$ Suppose $T\in\mathcal{L}(\ell_1^2,\ell_\infty^m)$ is non-zero.   Then $T$ preserves TEA pairs if and only if either of the following holds.
 	\begin{enumerate}
 		\item $rank(T)=1,$ and $\Lambda=\{i:1\leq i\leq 2,Te_i\neq 0\}$ is singleton.
 		\item $rank(T)=2,$ and 	$T$ is of the following form upto permutation of rows.
 		\[	T=\begin{bmatrix}
 			t_{11}&t_{12}\\
 			t_{11}&-t_{12}\\
 			t_{31}&t_{32}\\
 			\vdots&\vdots\\
 			t_{m1}&t_{m2}
 		\end{bmatrix}, ~\text{ where }|t_{11}|\geq |t_{i1}|, \text{ and }|t_{12}|\geq |t_{i2}| \text{ for all }3\leq i\leq m. \] 
 		\end{enumerate}
 \end{theorem}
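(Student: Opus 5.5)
We will treat the rank one and rank two cases separately. Throughout, we freely replace $T$ by $P_1T$, where $P_1$ is a generalized permutation matrix of size $m$. Such a $P_1$ is an isometry of $\ell_\infty^m$, so $T$ preserves TEA pairs if and only if $P_1T$ does. In particular, multiplying a row by $-1$ is allowed. This is consistent with the normalisations used before Proposition \ref{prop-nec}, and we read ``upto permutation of rows'' in $(2)$ in this sense. This reading is needed: for example, the matrix with rows $(1,1)$ and $(-1,1)$ preserves TEA pairs, but no permutation of its rows alone brings it to the displayed form.

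\emph{Rank one.} Sufficiency of $(1)$ is exactly the sufficient part of Theorem \ref{th-tea}. For necessity, suppose $rank(T)=1$, $Te_1\neq 0$ and $Te_2=aTe_1$. We repeat Step 2 of Theorem \ref{th-tea}. If $a\neq 0$, put $x=ae_1-e_2\in\ker(T)$ and let $0<\epsilon<|a|$. The pair $(x+\epsilon\, sgn(a)e_1,\,x-\epsilon\, sgn(a)e_1)$ has coordinatewise equal signs, so it is a TEA pair in $\ell_1^2$. Its image is the pair $(\epsilon\, sgn(a)Te_1,-\epsilon\, sgn(a)Te_1)$, which is then TEA, and this forces $Te_1=0$, a contradiction. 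Hence $a=0$ and $\Lambda$ is a singleton.

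\emph{Sufficiency of $(2)$.} A pair $(x,y)$ is TEA in real $\ell_1^2$ exactly when $x,y$ lie in a common closed quadrant, i.e. $sgn(x_k)$ and $sgn(y_k)$ never conflict. For $x=(x_1,x_2)$, rows $1$ and $2$ give $t_{11}x_1\pm t_{12}x_2$. One of these has absolute value $|t_{11}x_1|+|t_{12}x_2|$. By the hypotheses $|t_{11}|\geq|t_{i1}|$ and $|t_{12}|\geq|t_{i2}|$, this value dominates every other coordinate, so $Tx$ attains its norm at that row. The row in question is row $1$ if $sgn(t_{11}x_1)=sgn(t_{12}x_2)$ and row $2$ otherwise, so it depends only on the quadrant of $x$. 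Moreover, the sign of the corresponding entry is $sgn(t_{11}x_1)$ (or $sgn(t_{12}x_2)$ if $x_1=0$), which is also constant on the quadrant. Two vectors of the same closed quadrant therefore have images attaining their norms at a common row with equal signs, so $(Tx,Ty)$ is a TEA pair. The boundary rays need a routine check, since there both rows work.

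\emph{Necessity of $(2)$.} Assume $rank(T)=2$, so $k_1,k_2>0$. For signs $\sigma_1,\sigma_2\in\{\pm1\}$, the pair $(\sigma_1e_1,\sigma_2e_2)$ is TEA. Hence there is a row $r$ at which both $\sigma_1Te_1$ and $\sigma_2Te_2$ attain their norms with the same sign. That is, $r\in C_{11}\cap C_{21}$ and $sgn(t_{r1}t_{r2})=\sigma_1\sigma_2$. Taking $\sigma_1\sigma_2=1$ and then $-1$ produces rows $r\neq r'$ with
\[
|t_{r1}|=|t_{r'1}|=k_1,\qquad |t_{r2}|=|t_{r'2}|=k_2,\qquad t_{r1}t_{r2}>0>t_{r'1}t_{r'2}.
\]
Consequently row $r'$ equals $\pm(t_{r1},-t_{r2})$. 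After permuting rows and possibly changing the sign of row $r'$, these two rows become $(t_{11},t_{12})$ and $(t_{11},-t_{12})$. The inequalities $|t_{11}|\geq|t_{i1}|$ and $|t_{12}|\geq|t_{i2}|$ are then just the definitions of $k_1$ and $k_2$. Note that Theorem \ref{th-comr} is not even needed here.

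The main obstacle is the normalisation subtlety described at the start, together with the boundary cases in the sufficiency argument. Neither affects the core argument, which is short once TEA pairs in real $\ell_1^2$ are identified with pairs lying in a common closed quadrant.
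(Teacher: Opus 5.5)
Your proof is correct, and your normalisation remark is right. The paper's own necessity argument begins ``without loss of generality, assume $t_{i1}\geq 0$'', which already requires flipping the signs of rows. Your example $\begin{bmatrix}1&1\\-1&1\end{bmatrix}$, an isometry of $\ell_1^2$ onto $\ell_\infty^2$, shows that ``permutation of rows'' in $(2)$ must be read as signed permutation.

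The rank-one part and the sufficiency of $(2)$ follow the paper. The rank-one necessity is Step 2 of Theorem \ref{th-tea} with $sgn(a)$ in place of $e^{i\theta}$. The sufficiency of $(2)$ is a quadrant-by-quadrant check after normalising signs. Your estimate $|t_{i1}x_1+t_{i2}x_2|\leq |t_{11}x_1|+|t_{12}x_2|=\max\{|t_{11}x_1+t_{12}x_2|,|t_{11}x_1-t_{12}x_2|\}$ makes explicit why rows $3,\ldots,m$ never interfere.

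You genuinely differ from the paper in the necessity of $(2)$. The paper appeals to its full real classification of parallel pair preservers, Theorem \ref{th-comr}. It uses this to say that otherwise $sgn(t_{r2})$ would be constant over the rows with $t_{r1}t_{r2}\neq 0$. It then refutes this with the TEA pair $((1,b),(1,d))$, where $b,d<0$, $|b|$ is large and $|d|$ is small. You instead test the four TEA pairs $(\sigma_1e_1,\sigma_2e_2)$. Each yields a row in $C_{11}\cap C_{21}$ with $sgn(t_{r1}t_{r2})=\sigma_1\sigma_2$. The two rows obtained for $\sigma_1\sigma_2=\pm 1$ are exactly the two distinguished rows of the form, and the inequalities are just the definitions of $k_1,k_2$.

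Your route is self-contained: it uses only the descriptions of TEA pairs in $\ell_1^2$ and $\ell_\infty^m$. It does not depend on the chain Proposition \ref{prop-nec}, Proposition \ref{prop-04}, Theorem \ref{th-comr}. The paper's route mainly has the merit of placing rank-two TEA preservers inside its classification of parallel pair preservers.
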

\begin{proof}
	For the sufficient part, if $(1)$ holds, then proceeding similarly as Theorem \ref{th-tea}, we get $T$ preserves TEA pairs. Now suppose $(2)$ holds. Without loss of generality, assume $t_{11}>0,t_{12}>0.$ Suppose $((a,b),(c,d))$ is a TEA pair in $\ell_1^2.$ Then $ac\geq 0,bd\geq 0.$  If $a,c,b,d\geq 0,$ then both $T(a,b),$ and $T(c,d)$ attain their norm at the first coordinate. Since the first coordinates of  $T(a,b)$ and $T(c,d)$ are non-negative, $\Big(T(a,b),T(c,d)\Big)$ is a TEA pair.   If $a,c\geq 0, b,d\leq 0,$ then both $T(a,b),$ and $T(c,d)$ attain their norm at the second coordinate. Since the second coordinates of  $T(a,b)$ and $T(c,d)$ are non-negative, $\Big(T(a,b),T(c,d)\Big)$ is a TEA pair. Similarly, for the other cases, it is straightforward to check that  $\Big(T(a,b),T(c,d)\Big)$ is a TEA pair.   \\
	
	Conversely, assume $T=(t_{ij})$ preserves TEA pairs. If $rank(T)=1,$ then proceeding similarly as step (2) of Theorem \ref{th-tea}, we get $\Lambda$ is singleton. Suppose $rank(T)=2.$ Without loss of generality, assume $t_{i1}\geq 0$ for all $1\leq i\leq m.$ We claim that $t_{i2}>0,t_{j2}<0$ for some $i,j\in C_{11}\cap C_{21}.$ For otherwise, by Theorem \ref{th-comr}, $sgn(t_{r2})$ is constant for all $1\leq r\leq m$ such that $t_{r1}t_{r2}\neq 0.$ Let $t_{r2}>0$ for all $r$ satisfying $t_{r1}t_{r2}\neq 0.$ Choose $b,d<0$ such that $|b|$ is sufficiently large and $|d|$ is sufficiently small. Then $\Big((1,b),(1,d)\Big)$ is a TEA pair in $\ell_1^2.$ Since all coordinates of $T(1,b)$ are negative, and all  coordinates of $T(1,d)$ are positive,   $\Big(T(1,b),T(1,d)\Big)$ is not a TEA pair in $\ell_\infty^m,$ a contradiction. So assume $1,2\in C_{11}\cap C_{21}$ and $t_{12}>0,t_{22}<0.$ Therefore $t_{11}=t_{21}\geq |t_{i1}|,$ and $ -t_{22}=t_{12}\geq |t_{i2}|$ for all $3\leq i\leq m.$
	\end{proof}

Finally we conclude the section with the characterization of TEA pair preservers in $\mathcal{L}(\ell_1^2,\mathcal{Y}),$ where the scalars are real, and $\mathcal{Y}$ is strictly convex. The proof follows similarly as Theorem \ref{th-n2p}. To avoid monotony, we skip the proof.  

\begin{theorem}\label{th-ltea}
Assume $\mathbb{F}=\mathbb{R}.$ Suppose $T\in\mathcal{L}(\ell_1^2,\mathcal{Y})$ is non-zero.   Then $T$ preserves TEA pairs if and only if $T(\cdot)=f(\cdot)y,$ where $y\in \mathcal{Y}$ and $f\in \{e_1,e_2\}\subset \ell_\infty^2.$

\end{theorem}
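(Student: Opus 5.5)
The plan is to mirror the proof of Theorem \ref{th-n2p}, combined with Step 2 of Theorem \ref{th-tea}.

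\textbf{Sufficiency.} Suppose $T(\cdot)=f(\cdot)y$ with $f=e_1\in\ell_\infty^2$, so that $T(a,b)=ay$. If $((a,b),(c,d))$ is a TEA pair in $\ell_1^2$ over $\mathbb{R}$, then $ac\geq 0$ and $bd\geq 0$. Hence
\[\|T(a,b)+T(c,d)\|=|a+c|\|y\|=(|a|+|c|)\|y\|=\|T(a,b)\|+\|T(c,d)\|,\]
so $(T(a,b),T(c,d))$ is a TEA pair. The case $f=e_2$ is symmetric.

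\textbf{Necessity.} Assume $T\neq 0$ preserves TEA pairs. Then $T$ preserves parallel pairs. Since $\mathcal{Y}$ is strictly convex, two vectors in $\mathcal{Y}$ form a parallel pair only when they are linearly dependent. In particular $\{T(1,0),T(0,1)\}$ is dependent, because $(e_1,e_2)$ is a parallel pair in $\ell_1^2$. So $rank(T)=1$, and we may write $T(\cdot)=f(\cdot)z$ with $f=(a,b)\in\ell_\infty^2\setminus\{0\}$ and $z\neq 0$.

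It remains to show that $ab=0$. Suppose instead that $ab\neq 0$. Then $x=(b,-a)\in\ker(T)$. Following Step 2 of Theorem \ref{th-tea}, choose $0<\epsilon<|b|$ and set $u=x+\epsilon\,\mathrm{sgn}(b)e_1$ and $v=x-\epsilon\,\mathrm{sgn}(b)e_1$. Because $\epsilon<|b|$, the first coordinates of $u$ and $v$ share the sign of $b$, and their second coordinates are both $-a$. Therefore $(u,v)$ is a TEA pair in $\ell_1^2$. Its image is
\[(Tu,Tv)=(\epsilon\,\mathrm{sgn}(b)Te_1,\,-\epsilon\,\mathrm{sgn}(b)Te_1),\]
and this is a TEA pair only if $Te_1=0$, i.e.\ $a=0$. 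This contradicts $ab\neq 0$.

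Hence exactly one of $a,b$ is nonzero. We then have $T(\cdot)=e_1(\cdot)(az)$ or $T(\cdot)=e_2(\cdot)(bz)$, which is the required form with $y=az$ or $y=bz$.

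The only step needing care is checking that $(u,v)$ is genuinely a TEA pair, which reduces to the coordinatewise sign condition in $\ell_1^2$. Everything else follows directly from strict convexity and the rank one reduction.
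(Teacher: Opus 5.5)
Your proof is correct and follows the route the paper indicates, which is to argue as in Theorem \ref{th-n2p}: strict convexity forces $rank(T)=1$, so $T(\cdot)=f(\cdot)z$, and a single TEA test pair then kills one coefficient of $f=(a,b)$. The only difference is cosmetic: you take your test pair from Step 2 of Theorem \ref{th-tea}, by perturbing the kernel vector $(b,-a)$, whereas the direct analogue of Theorem \ref{th-n2p} would evaluate on the TEA pairs $(e_1,e_2)$ and $(e_1,-e_2)$ to get $ab\geq 0$ and $ab\leq 0$.
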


\section{Anknowledgement}
The author would like to thank DST, Govt. of India for partial financial support in the form of INSPIRE Faculty Fellowship (DST/INSPIRE/04/2022/001207).

\bibliographystyle{amsplain}

\end{document}